\newcommand{\1}{\mathbbm{1}}
\newcommand{\N}{\mathbb{N}}
\newcommand{\R}{\mathbb{R}}
\newcommand{\BBB}{\mathcal{B}}
\newcommand{\AAA}{\mathcal{A}}
\newcommand{\JJJ}{\mathcal{J}}
\newcommand{\OOO}{\mathcal{O}}
\newcommand{\UUU}{\mathcal{U}}
\newcommand{\ltri}{\vartriangleleft}
\newcommand{\ltrieq}{\trianglelefteq}
\newcommand{\rtri}{\vartriangleright}
\newcommand{\rtrieq}{\trianglerighteq}
\newcommand{\eq}{\text{eq}}
\newcommand{\uns}[1]{\breve{S}_{#1}}
\newcommand{\ac}[1]{\chi_{#1}}
\newcommand{\oic}[1]{\hat{\chi}_{#1}}
\newcommand{\unse}[2]{s_{(#1,#2)}}
\newcommand{\wh}{\widehat}
\newcommand{\pl}{\text{PL}}
\newcommand{\bi}{\text{B}}
\DeclareMathOperator{\Span}{Span}
\DeclareMathOperator{\pro}{pro}
\DeclareMathOperator{\RR}{R}
\newtheorem{theorem}{Theorem}[section]
\newtheorem{lemma}[theorem]{Lemma}
\newtheorem{cor}[theorem]{Corollary}
\newtheorem{conj}[theorem]{Conjecture}
\newtheorem{ques}[theorem]{Question}
\theoremstyle{remark}
\newtheorem*{remark}{Notation}
\theoremstyle{definition}
\newtheorem{ex}[theorem]{Example}
\newcounter{comments}
\tikzset{fill lower half/.style={path picture={\fill[#1] (path picture bounding box.south west)
  rectangle (path picture bounding box.east);}}}
\tikzstyle{fr} = [rectangle, draw, fill=black]
\tikzstyle{hr} = [rectangle, draw, fill lower half=black]
\tikzstyle{er} = [rectangle, draw]
\newcommand{\makerecs}[3]{\begin{tikzpicture}
\node (1) at (0,0) [#1] {};
\node (2) at (0.3,0) [#2] {};
\node (3) at (0.6,0) [#3] {};
\end{tikzpicture}}
\subjclass{05E18, 06A07}
\keywords{Fence posets, homomesy, rowmotion, toggleability statistics}
\begin{document}

\title{Toggleability Spaces of Fences}
\author{Alec Mertin \orcidlink{0009-0009-3167-7501}}
\email{amertin@clemson.edu}
\address{School of Mathematical and Statistical Sciences\\ Clemson University\\ Clemson\\ SC\\ U.S.A.}

\author{Svetlana Poznanovi\'c \orcidlink{0000-0002-8229-8220}}
\email{spoznan@clemson.edu}
\address{School of Mathematical and Statistical Sciences\\ Clemson University\\ Clemson\\ SC\\ U.S.A.}

\date{\today}

\begin{abstract}
We completely describe the \emph{order ideal (\textup{resp.} antichain) toggleability space} for general fences: the space of statistics which are linear combinations of order ideal (antichain) indicator functions and equal to a constant plus a linear combination of \emph{toggleability statistics}. This allows us to strengthen some homomesies under rowmotion on fences proven by Elizalde et al. and prove some new homomesy results for combinatorial, piecewise-linear, and birational rowmotion.
\end{abstract}

\maketitle


\section{Introduction}
\label{intro section}

The work in this paper is motivated by the goal of understanding the occurrence of the homomesy phenomenon under the action of rowmotion on order ideals of fences within two natural subspaces of statistics. \emph{Homomesy} occurs when a statistic on a set of objects has the same average along every orbit of some action and has been a recent recurring theme in the field of dynamical algebraic combinatorics. For a great overview of this phenomenon, see~\cite{roby:daca,stri:dacp}. The \emph{rowmotion} operator acting on the distributive lattice $\mathcal{J}(P)$ of order ideals of a poset $P$ is one of the operators that has received significant attention in this field. This action is easily described: rowmotion maps an order ideal $I$ to the order ideal generated by $\min(P\setminus I)$.
The name rowmotion we are using here originates from the work of Striker and Williams~\cite{stri:par} and is the most common name for this map now, but this action has been called many different names and studied by numerous authors in a variety of contexts previously, see~\cite{brou:otpo,fond:ooai,pany:oooa,rush:oooo,stan:pae} for a few examples. This simple action often induces rich structure, which has led to discoveries related to order, orbit structure, and other properties on certain families of posets, of which root and minuscule posets (in particular, rectangle posets) are notable examples.

Usual suspects for initially searching for homomesic statistics are those in the span of either the order ideal indicator functions ($\oic{p}$ for $p\in P$) or antichain indicator functions ($\ac{p}$ for $p\in P$). For example, both the order ideal and antichain cardinality statistics, which are simply the sum of all order ideal and antichain indicator functions, respectively, are both known to be homomesic on the $a\times b$ rectangle poset with respective averages $\frac{ab}{2}$ and $\frac{ab}{a+b}$~\cite{prop:hipo}. Let $I_H(P)$ and $A_H(P)$ denote the subspaces of homomesies within $\Span_\R(\{\oic{p}\mid p\in P\})$ and $\Span_\R(\{\ac{p}\mid p\in P\})$, respectively. A natural question is whether one can completely describe $I_H(P)$ and $A_H(P)$. Towards this goal, one can consider two subspaces, $I_T(P)$ and $A_T(P)$, made of those statistics in $I_H(P)$ and $A_H(P)$, respectively, which can be written as linear combinations of so-called toggleability statistics, up to a constant. Namely, if one can write a statistic $f$ as a linear combination of the toggleability statistics and a constant function, then it follows automatically that $f$ is homomesic under rowmotion. Moreover, for certain posets, the derived expression allows one to lift $f$ to the more general piecewise-linear and birational settings and obtain homomesies there~\cite{hopk:mdtc,defa:hvts}.  

This paper focuses on these spaces for fences. Fences have shown up in a variety of contexts due to their ties to cluster algebras, $q$-analogues, and unimodality (see the introduction of~\cite{eliz:rof} for a detailed background). The rowmotion action on fences is an interesting example in dynamical algebraic combinatorics. Its order is not known in general and its birational lift seems to have infinite order, so one expects fewer nice results compared to the minuscule and root posets. However, in their paper, which initiated the study of the dynamical algebraic combinatorics of fences, Elizalde et al.~\cite{eliz:rof} identified several statistics which are homomesic under rowmotion using a correspondence of rowmotion orbits with a certain kind of tilings. Here we show that many of those statistics have the stronger property of being in $I_T(F)$ or $A_T(F)$. 

Our main results give descriptions of the spaces $I_T(F)$ and $A_T(F)$. In Section~\ref{background section}, we give a more formal treatment to the terms loosely defined in the introduction, and we introduce other necessary terminology that will be used later. We exhibit a basis for each of the spaces $I_T(F)$ and $A_T(F)$ separately in Sections~\ref{acts section} and~\ref{oits section}. An interesting corollary of these results is that for a fence $F$ with $n$ elements and $t$ segments, $I_T(F)$ and $A_T(F)$ both have dimension $n-(t-1)$. In~\cite{defa:hvts}, it was conjectured that $\dim(I_T(P)) = \dim(A_T(P))$ for certain root and minuscule posets, but in general, there is no relation between $\dim(I_T(P))$ and $\dim(A_T(P))$. We believe this to be the first infinite family of posets for which the dimensions of the toggleability spaces have been proven.

It should be noted that while the statistics presented in the basis for $A_T(F)$ (Theorem~\ref{ac togg dim}) were shown to be homomesic in~\cite{eliz:rof}, as a corollary of describing $I_T(F)$ we obtain new homomesy results (Theorem~\ref{oic basis equiv const}, part of Corollary~\ref{oi homomesies cor}). 

In Section~\ref{the stat oic section}, we consider the order ideal cardinality statistic $\hat\chi$. While $\hat\chi$ is not in $I_T(F)$, it was conjectured in~\cite{eliz:rof} that $\hat\chi$ is homomesic for certain fences and proven to be homomesic for self-dual fences with three segments.  We connect this conjecture to a conjecture about differences of antichain indicator functions for ``opposite'' elements in the fence. Then we consider those differences and show results about them when the fence is self-dual.

In Section~\ref{sec:pl}, we explain how from the statistics in $I_T(F)$ and $A_T(F)$, we obtain homomesy results for corresponding lifted statistics under piecewise-linear and birational rowmotion. This idea was started in~\cite{hopk:mdtc} and further extended in~\cite{defa:hvts}. However, the main examples were the minuscule and root posets, for which birational rowmotion has finite order. In contrast, piecewise-linear and birational rowmotion on fences seem to have infinite order (except in a special case). However, since the spaces $I_T(F)$ and $A_T(F)$ are relatively big, we get many homomesies even in these settings without explicitly considering the structure of the infinite orbits. 

We end with a discussion in Section~\ref{sec:conclusion section} where we compare the toggleability subspaces with the homomesic subspaces and explain how some of our results imply homomesies under a different action on ideals called promotion.


\section{Background}
\label{background section}

All partially ordered sets (posets) in this paper are assumed to be finite. We call a poset a {\it fence} if it consists of alternating maximal chains called {\it segments} such that
\[
x_1\ltri\cdots\ltri x_{\alpha_1}\rtri\cdots\rtri x_{\alpha_1+\alpha_2}\ltri\cdots,
\]
where $\ltrieq$ is the partial order. We denote the $i$-th segment as $S_i$. We also define $[n]\coloneqq\{1,\dots,n\}.$

An element that covers or is covered by two elements, and hence belongs to two segments, is called \emph{shared}. A shared element is a \emph{peak} if it covers two elements; otherwise, it is a \emph{valley}. Elements that are not shared are \emph{unshared}, and we will denote the set of unshared elements of $S_i$ by $\uns{i}$. The fence in Figure~\ref{fence ex} has three segments $S_1=\{x_1,x_2,x_3\}$, $S_2=\{x_3,x_4,x_5,x_6\}$, and $S_3=\{x_6,x_7\}$. The elements $x_1,x_2,x_4,x_5,$ and $x_7$ are unshared, $x_3$ is a peak, and $x_6$ is a valley.

\begin{figure}[!ht]
\centering
\tikzstyle{b} = [circle, draw, fill=black, inner sep=0mm, minimum size=1mm]
\tikzstyle{r} = [circle, draw, color=red, fill=red, inner sep=0mm, minimum size=1mm]
\tikzstyle{line} = [draw, -latex']
\begin{tikzpicture}[xscale=0.875,yscale=0.9,baseline={(0,2.5)}]
\node (1) at (0,0) [b] {};
\node (11) at (-.05,-0.5) {$x_1$};
\node (2) at (1,1) [b] {};
\node (12) at (1,0.5) {$x_2$};
\node (3) at (2,2) [b] {};
\node (13) at (2,1.5) {$x_3$};
\node (4) at (3,1) [b] {};
\node (14) at (3,0.5) {$x_4$};
\node (5) at (4,0) [b] {};
\node (15) at (4,-0.5) {$x_5$};
\node (6) at (5,-1) [b] {};
\node (16) at (5,-1.5) {$x_6$};
\node (7) at (6,0) [b] {};
\node (17) at (6,-0.5) {$x_7$};
\draw (1) -- (2);
\draw (2) -- (3);
\draw (3) -- (4);
\draw (4) -- (5);
\draw (5) -- (6);
\draw (6) -- (7);
\end{tikzpicture}
\caption{An example of a fence, $\breve{F}(3,3,2)$.}
\label{fence ex}
\end{figure}

In general, we refer to a fence through the use of a $t$-tuple of positive integers $\alpha=(\alpha_1,\dots,\alpha_t)$, where $t\geq 2$, $\alpha_1,\alpha_t\geq 2$, and
\[
\alpha_i=\#\uns{i}+1,
\]
for all $i\in [t]$. The fence constructed from this $\alpha$ is denoted $\breve{F}(\alpha)$. It follows from the definition of $\alpha$ that 
\[
\#F=\alpha_1+\alpha_2+\cdots+\alpha_t-1
\]
for any fence $F=\breve{F}(\alpha)$.

An {\it order ideal} of a poset $(P,\ltrieq)$ is a subset $I\subseteq P$ such that if $x\in I$ and $y\ltrieq x$, then $y\in I$ for all $x,y\in P$. From here on, when referring to an order ideal, we will simply say ``ideal". For a subset $S\subseteq P$, we let $\langle S\rangle$ denote the ideal generated by the set $S$. We denote the set of all ideals of a poset by $\mathcal{J}(P)$. The map $\rho:\JJJ(P)\to \JJJ(P)$ called \emph{rowmotion} is defined as
\[
\rho(I)\coloneqq\{x\in P:x\ltrieq y\text{ for some }y\in\min(P\setminus I)\}.
\]
In other words, $\rho$ sends an ideal $I$ to the ideal generated by the minimal elements of the complement of $I$. This is a succinct way to describe rowmotion, but it can also be defined using local maps known as toggles.

For each $p\in P$, define the {\it toggle} $\tau_p:\JJJ(P)\to \JJJ(P)$ as
\[
\tau_p(I)\coloneqq\begin{cases}
I\cup\{p\}&\text{if }p\in\min(P\setminus I),\\
I\setminus\{p\}&\text{if }p\in\max(I),\\
I&\text{otherwise}.
\end{cases}
\]
The toggle operation $\tau_p(I)$ adds $p$ to the ideal $I$ if $p\notin I$ and the result is a valid ideal, removes $p$ if $p\in I$ and removing it results in a valid ideal, and does nothing otherwise. Furthermore, $\tau_p$ and $\tau_{p'}$ commute if $p$ and $p'$ do not share a covering relation.

A \emph{linear extension} of a poset $P$ is a list $p_1,\dots,p_n$ of all elements of $P$ that respects the order of $P$. That is, $p_i\ltrieq p_j$ in $P$ implies $i\leq j$. Cameron and Fon-Der-Flaass showed in~\cite{came:ooar} that rowmotion is equivalent to a composition of toggles, $\rho=\tau_{p_1}\circ\cdots\circ\tau_{p_n}$,
for any linear extension $p_1,\dots,p_n$ of $P$, where the toggles are applied from right to left. See Figure~\ref{rowmotion ex} for an example of a rowmotion orbit.

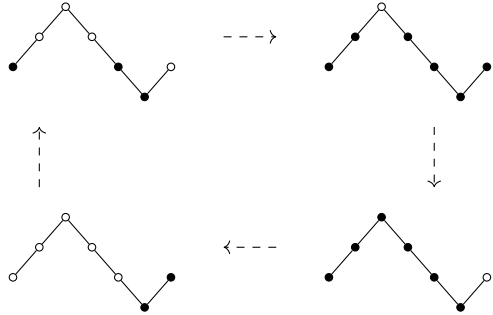
\begin{figure}[!ht]
\centering
\tikzstyle{b} = [circle, draw, fill=black, inner sep=0mm, minimum size=1mm]
\tikzstyle{w} = [circle, draw, color=black, fill=white, inner sep=0mm, minimum size=1mm]
\tikzstyle{line} = [draw, -latex']
\begin{tikzpicture}[xscale=0.35,yscale=0.4,baseline={(0,3)}]
\begin{scope}[shift={(0,4)}]
\node (1) at (0,0) [b] {};
\node (2) at (1,1) [w] {};
\node (3) at (2,2) [w] {};
\node (4) at (3,1) [w] {};
\node (5) at (4,0) [b] {};
\node (6) at (5,-1) [b] {};
\node (7) at (6,0) [w] {};
\draw (1) -- (2);
\draw (2) -- (3);
\draw (3) -- (4);
\draw (4) -- (5);
\draw (5) -- (6);
\draw (6) -- (7);
\draw[->,dashed](8,1) -- (10,1);
\end{scope}

\begin{scope}[shift={(12,4)}]
\node (1) at (0,0) [b] {};
\node (2) at (1,1) [b] {};
\node (3) at (2,2) [w] {};
\node (4) at (3,1) [b] {};
\node (5) at (4,0) [b] {};
\node (6) at (5,-1) [b] {};
\node (7) at (6,0) [b] {};
\draw (1) -- (2);
\draw (2) -- (3);
\draw (3) -- (4);
\draw (4) -- (5);
\draw (5) -- (6);
\draw (6) -- (7);
\draw[->,dashed](4,-2) -- (4,-4);
\end{scope}

\begin{scope}[shift={(12,-3)}]
\node (1) at (0,0) [b] {};
\node (2) at (1,1) [b] {};
\node (3) at (2,2) [b] {};
\node (4) at (3,1) [b] {};
\node (5) at (4,0) [b] {};
\node (6) at (5,-1) [b] {};
\node (7) at (6,0) [w] {};
\draw (1) -- (2);
\draw (2) -- (3);
\draw (3) -- (4);
\draw (4) -- (5);
\draw (5) -- (6);
\draw (6) -- (7);
\draw[->,dashed](-2,1) -- (-4,1);
\end{scope}

\begin{scope}[shift={(0,-3)}]
\node (1) at (0,0) [w] {};
\node (2) at (1,1) [w] {};
\node (3) at (2,2) [w] {};
\node (4) at (3,1) [w] {};
\node (5) at (4,0) [w] {};
\node (6) at (5,-1) [b] {};
\node (7) at (6,0) [b] {};
\draw (1) -- (2);
\draw (2) -- (3);
\draw (3) -- (4);
\draw (4) -- (5);
\draw (5) -- (6);
\draw (6) -- (7);
\draw[->,dashed](1,3) -- (1,5);
\end{scope}

\end{tikzpicture}
\caption[An example of a rowmotion orbit on the fence $\breve{F}(3,3,2)$.]{An example of a rowmotion orbit on the fence $\breve{F}(3,3,2)$, where elements of the ideals are shaded in.}
\label{rowmotion ex}
\end{figure}

Given a poset $P$, an element $p\in P$, and an ideal $I\subseteq P$, we define the {\it order ideal indicator function} $\hat{\chi}_p(I)$ and the {\it antichain indicator function} $\chi_p(I)$ as
\[
\hat{\chi}_p(I)\coloneqq\begin{cases}
1&\text{if }p\in I,\\
0&\text{otherwise}
\end{cases}
\quad\text{and}\quad
\chi_p(I)\coloneqq\begin{cases}
1&\text{if }p\in\max(I),\\
0&\text{otherwise}.
\end{cases}
\]
Related to these are the {\it order ideal cardinality statistic} $\hat{\chi}(I)$ given by
\[
\hat{\chi}(I)\coloneqq\# I=\sum_{p\in P}\hat{\chi}_p(I),
\]
and the {\it antichain cardinality statistic} $\chi(I)$ given by
\[
{\chi}(I)\coloneqq\#\max(I)=\sum_{p\in P}{\chi}_p(I).
\]

We define the {\it toggleability statistic} $T_p:\JJJ(P)\to\R$, for $p\in P$, by
\begin{align*}
T_p(I)&\coloneqq 
\begin{cases}
1&\text{if }p\in\min(P\setminus I),\\
-1&\text{if }p\in\max(I),\\
0&\text{otherwise}.
\end{cases}
\end{align*}
Note that an element $p\in P$ that can be ``toggled in" to the ideal $I$ has $T_p(I)=1$, while an element that can be ``toggled out" has $T_p(I)=-1$. An element that cannot be toggled in or out has $T_p(I)=0$. For ease of notation, given a fence $F$, we also define $T_i\coloneqq T_{x_i}$ for $x_i\in F$.

For example, for the ideal $I=\{x_1,x_2,x_5,x_6\}$ of the fence in Figure~\ref{fence ex}, we have $\max(I)=\{x_2,x_5\}$, $\hat{\chi}(I)=4$, $\chi(I)=2$, $T_2(I)=T_5(I)=-1$, $T_4(I)=T_7(I)=1$, and $T_i(I)=0$ for $i=1,3,6$.

In Section~\ref{sec:pl}, the following alternative way to view the toggleability statistics will be useful: $T_p(I)=T_p^+(I)-T_p^-(I)$, where 
\[
T_p^+(I)\coloneqq\begin{cases}
1&\text{if }p\in\min(P\setminus I),\\
0&\text{otherwise}
\end{cases}
\quad\text{and}\quad
T_p^-(I)\coloneqq\begin{cases}
1&\text{if }p\in\max(I),\\
0&\text{otherwise.}
\end{cases}
\]

A statistic $f:X\to \R$, with $X$ a finite set, is said to be {\it homomesic} if the average of $f$ on every orbit of an invertible operator $T:X\to X$ is equal to some constant $c$, in which case $f$ is said to be {\it $c$-mesic}. The interest in the toggleability statistics, when homomesies are considered, is due to the following fact:

\begin{theorem}[\cite{stri:ttgh}, Lemma 6.2]
For any poset $P$ and any $p\in P$, the toggleability statistic $T_p$ is 0-mesic under rowmotion acting on $\JJJ(P)$.
\end{theorem}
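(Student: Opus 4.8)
The plan is to reduce the orbit-average computation to a telescoping sum by first establishing a \emph{pointwise} identity that relates $T_p$ to the change in the order ideal indicator $\oic{p}$ across a single step of rowmotion. Concretely, I would prove that for every ideal $I\in\JJJ(P)$ and every $p\in P$,
\[
T_p(I)=\oic{p}(\rho(I))-\oic{p}(I).
\]
Granting this, fix an orbit $\mathcal{O}=\{I,\rho(I),\dots,\rho^{m-1}(I)\}$ with $\rho^m(I)=I$. Summing the identity around the orbit telescopes:
\[
\sum_{j=0}^{m-1}T_p(\rho^j(I))=\sum_{j=0}^{m-1}\bigl(\oic{p}(\rho^{j+1}(I))-\oic{p}(\rho^j(I))\bigr)=\oic{p}(\rho^m(I))-\oic{p}(I)=0,
\]
using $\rho^m(I)=I$ in the last step. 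Thus the average of $T_p$ over every orbit is $0$, which is exactly the claim that $T_p$ is $0$-mesic.

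To prove the pointwise identity I would invoke the Cameron--Fon-Der-Flaass description $\rho=\tau_{p_1}\circ\cdots\circ\tau_{p_n}$ for a fixed linear extension $p_1,\dots,p_n$, with the toggles acting from right to left. The key observation is to track the state of $p=p_k$ at the instant its own toggle $\tau_{p_k}$ is applied, when $\tau_{p_n},\dots,\tau_{p_{k+1}}$ have already acted but $\tau_{p_{k-1}},\dots,\tau_{p_1}$ have not. Because $p_1,\dots,p_n$ is a linear extension, every element strictly below $p_k$ precedes it and so still holds its $I$-value, every element strictly above $p_k$ follows it and already holds its $\rho(I)$-value, and $p_k$ itself still holds its $I$-value with no later toggle able to change it. Consequently $\tau_{p_k}$ adds $p_k$ precisely when $p_k\in\min(P\setminus I)$, i.e.\ when $T_p^+(I)=1$, and in that case $p_k\in\rho(I)$; it removes $p_k$ precisely when $p_k\in\max(I)$ relative to its $\rho(I)$-superset, i.e.\ when $T_p^-(I)=1$, and then $p_k\notin\rho(I)$. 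Checking the four cases (added, removed, unchanged-in, unchanged-out) shows $T_p(I)=T_p^+(I)-T_p^-(I)$ equals $\oic{p}(\rho(I))-\oic{p}(I)$ in each, giving the identity.

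The hard part will be the bookkeeping in this pointwise identity rather than the telescoping, which is automatic. Specifically, I would need to argue carefully that, at the moment $\tau_{p_k}$ fires, the downset and upset of $p_k$ genuinely carry the $I$- and $\rho(I)$-values asserted, which rests on the linear-extension property together with the facts that a toggle alters the membership of its own element only, and that whether $p_k$ lies in $\min(P\setminus J)$ or $\max(J)$ for an intermediate ideal $J$ depends solely on elements comparable to $p_k$. Once these local observations are in place the case analysis is routine, so I expect the principal care to go into justifying that the intermediate states restrict correctly to $I$ below $p_k$ and to $\rho(I)$ above it.
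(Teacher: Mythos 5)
Your telescoping strategy is the right instinct, but the pointwise identity it rests on, $T_p(I)=\oic{p}(\rho(I))-\oic{p}(I)$, is false. Take $P$ to be a two-element chain $p_1\ltri p_2$ and $I=\{p_1,p_2\}$. Then $\rho(I)=\emptyset$, so $\oic{p_1}(\rho(I))-\oic{p_1}(I)=-1$, yet $T_{p_1}(I)=0$ because $p_1$ is neither in $\min(P\setminus I)=\emptyset$ nor in $\max(I)=\{p_2\}$. The error is exactly at the spot you flagged as delicate: when $\tau_{p_k}$ fires, the elements covering $p_k$ already carry their $\rho(I)$-values, so $\tau_{p_k}$ removes $p_k$ precisely when $p_k\in I$ and no cover of $p_k$ lies in $\rho(I)$ --- a condition that is \emph{not} equivalent to $p_k\in\max(I)$. (In the example, $p_1$ is removed during rowmotion because $p_2$ has already been removed, even though $T_{p_1}^-(I)=0$.) The phrase ``$p_k\in\max(I)$ relative to its $\rho(I)$-superset'' conflates these two conditions, and the four-case check does not close in the ``removed but not maximal in $I$'' case, where $T_p(I)=0$ but $\oic{p}(\rho(I))-\oic{p}(I)=-1$.

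The repair is to telescope a different quantity. Since $\rho(I)$ is the order ideal generated by the antichain $\min(P\setminus I)$, one has $\max(\rho(I))=\min(P\setminus I)$, hence the correct one-step identity is
\[
T_p^+(I)=T_p^-(\rho(I))
\]
for every $I\in\JJJ(P)$ --- no toggle bookkeeping needed. Writing $T_p=T_p^+-T_p^-$ and summing over an orbit $\{I,\rho(I),\dots,\rho^{m-1}(I)\}$ gives
\[
\sum_{j=0}^{m-1}T_p(\rho^j(I))=\sum_{j=0}^{m-1}T_p^-(\rho^{j+1}(I))-\sum_{j=0}^{m-1}T_p^-(\rho^j(I))=0,
\]
since both sums range over the same orbit. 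This is the same mechanism the paper uses for the birational analogue in Section~\ref{sec:pl}, where the identity $T_p^{+,\bi}(\pi)=T_p^{-,\bi}(\rho^{\bi}(\pi))$ drives the proof that $T_p^{\bi}$ is multiplicatively $1$-mesic.
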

\noindent A simple, intuitive explanation for this theorem is that every element must be toggled in exactly as many times as it is toggled out in a rowmotion orbit. 

Because of the linearity of expectation, any linear combination of 0-mesic statistics is also 0-mesic. So, when trying to prove that a statistic $f$ is homomesic, one can try to take advantage of a technique systematized in~\cite{defa:hvts} by attempting to express $f$ as the sum of a constant $c$ and a linear combination of toggleability statistics. That is, one can try to find suitable constants $c_p$, for $p\in P$, such that
\begin{equation}
\label{togg eq}
f(I)=c+\sum_{p\in P}c_pT_p(I)
\end{equation}
for all $I\in \JJJ(P)$.  If such an identity exists, then the identity implies that $f$ is $c$-mesic. The converse is not true in general. To make the existence of such an identity easier to indicate, we use the notation from~\cite{defa:hvts} and write $f\equiv g$ when there exist constants $c_p\in \R$ such that $f-g=\sum_{p\in P}c_pT_p$. By a slight abuse of notation, for $c\in\R$, we also write $c$ for the function that is identically equal to $c$. Often we will write $f\equiv$ const to denote that $f$ can be written as in~\eqref{togg eq} (and is hence homomesic) without specifying the value of $c$. 

It is straightforward to see that functions which are $\equiv$ const are closed under addition and scalar multiplication and thus form a subspace of the homomesic statistics on $\JJJ(P)$ under rowmotion. We will mainly be concerned with two subspaces, the \emph{antichain toggleability space} $A_T(P)$ and the \emph{order ideal toggleability space} $I_T(P)$, where
\[
A_T(P)\coloneqq \{f:f\in A_H(P) \text{ and } f\equiv\text{const}\}
\]
and
\[
I_T(P)\coloneqq \{f:f\in I_H(P) \text{ and } f\equiv\text{const}\}.
\]
In the following two sections, we will completely describe both of these spaces for a general fence $F$.

\begin{remark}
We summarize here some notation and conventions that will be used throughout the next three sections. We define $\beta_i\coloneqq \#\uns{i}=\alpha_i-1$ for $i\in[t]$. Let $s_{(i,j)}$ be the $j$-th minimal element of $\uns{i}$. When statistics refer to elements of this type, we will often omit the `s', as in $T_{(i,j)}$, $\oic{(i,j)}$, and $\ac{(i,j)}$. Let $s_i$ be the unique shared element in $S_i\cap S_{i+1}$. When statistics refer to elements of this type, we will include the `s', as in $T_{s_i}$, $\oic{s_i}$, and $\ac{s_i}$. We adopt the convention that if an element does not exist in $F$, then the statistics of interest indexed by it (say, $T_{s_0}$, $\oic{s_t}$, $\ac{(i,\alpha_i)}$) are defined to be identically zero on $\JJJ(F)$.
\end{remark}


\section{Antichain Toggleability Space}
\label{acts section}

In this section, we describe a basis for the antichain toggleability space $A_T(F)$ of a general fence $F = \breve{F}(\alpha_1,\dots,\alpha_t)$. Let
\[
\mathcal{B}_A(F) = \bigcup_{i=1}^t\{\alpha_i\ac{x}+\ac{s_{i-1}}+\ac{s_i}\mid x\in\uns{i}\}.
\]
The statistics in $\mathcal{B}_A(F)$ were shown to be homomesic in~\cite{eliz:rof}. Here we first show the stronger result that these statistics are $\equiv$ const, and thus in $A_T(F)$. Since they are linearly independent, this gives a lower bound on $\dim A_T(F)$. Lemma~\ref{glc ac lemma} gives an upper bound on $\dim A_T(F)$. 

\begin{theorem} 
\label{peak and valley anti}
Let $\alpha=(\alpha_1,\dots,\alpha_t)$ with corresponding fence $F=\breve{F}(\alpha)$. Let $i\in[t]$ with $\alpha_i\geq 2$, $x\in \uns{i}$, $v$ be the valley of $S_i$ (if it exists), $p$ be the peak of $S_i$ (if it exists), $y=s_{(i,1)}$, and $z=s_{(i,\beta_{i})}$. Then
\begin{equation}
\alpha_i\chi_x+\chi_v+\chi_p=1-T_v-\sum_{y\ltrieq u\ltrieq x}\#[y,u]T_u+\sum_{x\ltri u\ltrieq z}\#[u,z]T_u.
\label{acts basis}
\end{equation}
\end{theorem}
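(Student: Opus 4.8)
The plan is to prove \eqref{acts basis} directly, as an identity of functions on $\JJJ(F)$, by rewriting everything in terms of order-ideal indicators and telescoping. The starting point is the observation that $\ac{q}=T_q^-$ for every $q$, since both equal the indicator of the event $q\in\max(I)$. Writing $T_q=T_q^+-T_q^-$, in the difference of the two sides of \eqref{acts basis} the antichain term $\ac{v}$ and the toggle term $T_v$ therefore combine as $\ac{v}+T_v=T_v^+$. Because the valley $v$ is a minimal element of $F$, we have $T_v^+=1-\oic{v}$; in particular this depends only on whether $v\in I$ and not on the neighbor of $v$ in the adjacent segment. Thus this first step removes all dependence on elements outside $S_i$, and reduces the claim to a statement about the single chain $v\ltri s_{(i,1)}\ltri\cdots\ltri s_{(i,\beta_i)}\ltri p$.

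Next I would express each toggleability statistic on the right in terms of the $\oic{}$'s on this chain. Since each unshared element $s_{(i,j)}$ has a unique lower cover and a unique upper cover, one obtains the discrete second-difference identity
\[
T_{(i,j)}=\oic{(i,j-1)}-2\,\oic{(i,j)}+\oic{(i,j+1)},
\]
under the conventions $s_{(i,0)}=v$ and $s_{(i,\beta_i+1)}=p$, together with $\ac{x}=\oic{x}-\oic{x^+}$ for the unique upper cover $x^+$ of $x$ (so $x^+=p$ when $x=z$) and $\ac{p}=\oic{p}$, the peak $p$ being maximal. Writing $x=s_{(i,m)}$ and using $\#[s_{(i,1)},s_{(i,j)}]=j$ and $\#[s_{(i,j)},s_{(i,\beta_i)}]=\beta_i-j+1$, the two sums on the right of \eqref{acts basis} become $\sum_{j=1}^{m}j\,T_{(i,j)}$ and $\sum_{j=m+1}^{\beta_i}(\beta_i-j+1)T_{(i,j)}$.

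The computational heart is then a summation by parts. Substituting the second-difference identity into these linearly weighted sums telescopes away every interior indicator, leaving only boundary terms: the first sum collapses to $\oic{v}-(m+1)\oic{x}+m\,\oic{x^+}$ and the second to $(\beta_i-m)\oic{x}-(\beta_i-m+1)\oic{x^+}+\oic{p}$. Feeding these back into the difference of the two sides, together with $T_v^+=1-\oic{v}$, $\ac{p}=\oic{p}$, and $\alpha_i\ac{x}=(\beta_i+1)(\oic{x}-\oic{x^+})$, and using $\alpha_i=\beta_i+1$, every $\oic{}$ term at $v$, $x$, $x^+$, and $p$ cancels against its counterpart, and the two stray constants cancel as well; hence the difference is identically $0$, which is \eqref{acts basis}.

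The step I expect to be the main obstacle is the boundary bookkeeping at the shared elements, specifically the cases in which the segment has no valley (namely $i=1$, and $i=t$ when $t$ is even) or no peak. When $v$ is absent, the second-difference identity for $T_{(i,1)}$ fails: the element $s_{(i,1)}$ is then the global minimum, so $T_{(i,1)}=1-2\,\oic{(i,1)}+\oic{(i,2)}$ carries an extra constant $1$. One must check that this $1$ is exactly the constant previously supplied by $T_v^+=1-\oic{v}$ (which now vanishes, as $\oic{v}=0$ identically by the convention), so that the final cancellation still produces the constant $1$ on the right of \eqref{acts basis}. The peak-absent case is milder: a maximal phantom contributes $\oic{p}=0$, matching the convention directly, so no compensating term is needed. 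Confirming these boundary identifications, along with the extreme sub-cases $m=1$ and $m=\beta_i$ of the telescoping, is the only part that requires genuine care.
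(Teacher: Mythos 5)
Your proposal is correct, but it proves the identity by a genuinely different route than the paper. The paper's proof is a direct case analysis on the ideal $I$: it evaluates both sides on every possible configuration of $\max(I)\cap S_i$ (four main cases with several subcases each) and checks that the values agree. You instead convert the identity into a purely algebraic one: using $\ac{q}=T_q^-$ so that $\ac{v}+T_v=T_v^+=1-\oic{v}$ (which, crucially, removes the only dependence on elements outside $S_i$, since $T_v^-$ is the part of $T_v$ that sees the neighboring segment), the discrete second-difference formula $T_{(i,j)}=\oic{(i,j-1)}-2\oic{(i,j)}+\oic{(i,j+1)}$ valid for elements with unique upper and lower covers, and $\ac{x}=\oic{x}-\oic{x^+}$, $\ac{p}=\oic{p}$ (the dictionary the paper only records later, in its Lemma on \eqref{ac to oic dict} and \eqref{oic to ac dict}). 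I have checked your two telescoped boundary expressions, $\oic{v}-(m+1)\oic{x}+m\,\oic{x^+}$ and $(\beta_i-m)\oic{x}-(\beta_i-m+1)\oic{x^+}+\oic{p}$, including the degenerate instances $m=1$ and $m=\beta_i$, and the final cancellation using $\alpha_i=\beta_i+1$; all are correct. Your treatment of the boundary cases is also right: when $S_i$ has no valley the extra constant $1$ in $T_{(i,1)}=1-2\oic{(i,1)}+\oic{(i,2)}$ enters the first sum with coefficient $\#[y,y]=1$ and exactly replaces the lost constant from $T_v^+$, while the peak-absent case needs no correction since $T_z=\oic{z^-}-2\oic{z}$ already matches the convention $\oic{p}=0$. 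What your approach buys is an explanation of \emph{where} the coefficients $\#[y,u]$ and $\#[u,z]$ come from --- they are the linear weights that invert the discrete second difference on a path --- at the cost of more delicate bookkeeping at the segment ends; the paper's case check is longer but requires no setup and makes the value of each side on each type of ideal explicit, which it then reuses in later arguments.
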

\begin{proof}
For a given ideal $I$, the left-hand side (LHS) of~\eqref{acts basis} has possible values:
\[
(\alpha_i\chi_x+\chi_v+\chi_p)(I)=\begin{cases}
\alpha_i&\text{if }x\in\max(I),\\
1&\text{if }v\in \max(I),\\
1&\text{if }p\in \max(I),\\
0&\text{if }x,v,p\notin\max(I).
\end{cases}
\]
Next, we compute the value of the right-hand side (RHS) of~\eqref{acts basis} by considering four cases. In evaluating the RHS, it is helpful to note that the relevant toggleability statistics are all associated with elements from the same segment $S_i$. Therefore, for each ideal $I$, at most two of them are nonzero.

Case 1: Let $x\in\max(I)$. Then $T_x=-1$, and there are three subcases to consider based on whether $x$ is covered by an unshared element, $x$ is covered by $p$, or $x$ is not covered by any element.

Case 1a: Assume $x$ is covered by an unshared element $t$. Note that $x\ltri t\ltrieq z$, $[t,z]=(x,z]$, and $T_t=1$, so the RHS simplifies to
\begin{align*}
1-\#[y,x]T_x+\#[t,z]T_t=1+\#[y,x]+\#(x,z]=1+\#[y,z]=1+\beta_i=\alpha_i,
\end{align*}
since $\#[y,z]$ is the number of elements in $\uns{i}$.

Case 1b: Assume $x$ is covered by $p$. In this case, $x=z$ and $T_x=-1$ is the only nonzero toggleability statistic on the RHS, so the RHS reduces to
\[
1-\#[y,x]T_x=\alpha_i.
\]

Case 1c: Assume $x$ is not covered by any element. Note that this case reduces to the previous case since the assumption implies $x=z$, so $T_x=-1$ is still the only nonzero toggleability statistic on the RHS. Thus, the RHS reduces to $\alpha_i$ in this case as well, allowing us to conclude that the RHS always has value $\alpha_i$ if $x\in\max(I)$.

Case 2: Let $v\in\max(I)$, and note that $T_v=-1$. Also note that $y$ covers $v$ and that $T_y=1$ since $y$ can be toggled in. The RHS then simplifies to
\[
1-T_v-\#[y,y]T_y=1.
\]

Case 3: Let $p\in\max(I)$. Note that since $p$ is the only element of $S_i$ that can be toggled, all toggleability statistics on the RHS are zero. Thus, the RHS is equal to 1.

Case 4: Assume $x,v,p\notin\max(I)$. We consider five subcases based on whether the segment $S_i$ has a valley, whether $\max(I)$ contains an element from $S_i$ and, if so, where it is located.

Case 4a: Assume $v$ does not exist, and no elements of $S_i$ are in $I$. In this case, $y$ can be toggled in and $T_y=1$ is the only nonzero toggleability statistic on the RHS, so the RHS is 
\[
1-\#[y,y]T_y=0.
\]

Case 4b: Assume $v$ exists and no elements of $S_i$ are in $I$. In this case, $T_v=1$ since $v$ can be toggled in, and the RHS is $1-T_v=0$.

Case 4c: Assume $v\in I$, but $r\notin\max(I)$ for every $r\in S_i$. In this case, $y$ can be toggled in, so $T_y=1$ and the RHS simplifies to 0. 

Case 4d: Assume $r\in\max(I)$ for some $r\in[y,x)$. In this case, $r$ and the element that covers $r$, say $t$, would result in two nonzero summands in the first sum, with $T_r=-1$ and $T_t=1$. The RHS would then simplify to
\[
1-\#[y,r]T_r-\#[y,t]T_t=0.
\]

Case 4e: Assume $r\in\max(I)$ for some $r\in(x,z]$. Again, we have that $T_r=-1$, and we let $t$ denote the element of $S_i$ that covers $r$. If $t\neq p$, then $T_t$ appears in the second sum and $T_t=1$, so the RHS is
\[
1+\#[r,z]T_r+\#[t,z]T_t=0.
\]
If $t=p$, then $T_t$ does not appear in the second sum and $\#[r,z]=1$, resulting in a RHS value of
\[
1+\#[r,z]T_r=0.
\]
In all five cases, we see the RHS has a value of 0.

In conclusion, the case analysis shows that
\[
\left(1-T_v-\sum_{y\ltrieq u\ltrieq x}\#[y,u]T_u+\sum_{x\ltri u\ltrieq z}\#[u,z]T_u\right)(I)=\begin{cases}
\alpha_i&\text{if }x\in\max(I),\\
1&\text{if }v\in \max(I),\\
1&\text{if }p\in \max(I),\\
0&\text{if }x,v,p\notin\max(I).
\end{cases}
\]
Therefore, the LHS and RHS of~\eqref{acts basis} are equal for every ideal $I\in \JJJ(F)$, giving us the desired result.
\end{proof}

Since statistics which are $\equiv$ const are closed under linear combinations, we have the following corollary of Theorem~\ref{peak and valley anti}, which strengthens some of the homomesy results that were shown in~\cite{eliz:rof}.
\begin{cor}
Let $\alpha=(\alpha_1,\dots,\alpha_t)$ with corresponding fence $F=\breve{F}(\alpha)$.  
\begin{enumerate}
\item If $x,y\in\uns{i}$, then $\ac{x}-\ac{y}\equiv$ 0.
\item If $\alpha_i=2$ for all $i\in[t]$, then $\chi\equiv t/2$.
\end{enumerate}
\label{anti homom cor}
\end{cor}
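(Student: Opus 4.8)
The plan is to obtain both parts as purely linear-algebraic consequences of Theorem~\ref{peak and valley anti}, using only that the statistics which are $\equiv 0$ form a subspace (so that $\equiv$ is transitive and closed under scaling).

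For part (1), I would fix the segment $S_i$ and apply Theorem~\ref{peak and valley anti} to $x$ and to $y$ separately. Because $x,y\in\uns{i}$ lie in the \emph{same} segment, the valley $v$ and peak $p$ on the left-hand side of~\eqref{acts basis} are identical in the two instances, so
\[
\alpha_i\ac{x}+\ac{v}+\ac{p}\equiv 1\equiv \alpha_i\ac{y}+\ac{v}+\ac{p}.
\]
Subtracting cancels the common $\ac{v}+\ac{p}$ and the constant, leaving $\alpha_i(\ac{x}-\ac{y})\equiv 0$; dividing by the nonzero integer $\alpha_i$ then yields $\ac{x}-\ac{y}\equiv 0$.

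For part (2), I would specialize to $\alpha_i=2$ for all $i$, so that each segment $S_i$ has a single unshared element $x_i$, and sum the resulting $t$ congruences:
\[
\sum_{i=1}^{t}\bigl(2\ac{x_i}+\ac{v_i}+\ac{p_i}\bigr)\equiv t,
\]
with $v_i,p_i$ the valley and peak of $S_i$ (taken to be zero when absent). The crux is a double-counting observation: a shared element is the peak of \emph{both} of its adjacent segments or the valley of both, so each shared element contributes its indicator to exactly two summands among the $\ac{v_i},\ac{p_i}$, while each unshared $x_i$ contributes to exactly one summand $2\ac{x_i}$. Thus every element of $F$ appears with total coefficient $2$, the left-hand side equals $2\chi$, and $\chi\equiv t/2$ follows.

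I expect the only delicate point to be the bookkeeping in part (2): I would check that the convention setting nonexistent peaks and valleys to zero correctly accounts for the outer endpoints of $S_1$ and $S_t$, whose outer vertices are unshared extrema rather than shared peaks or valleys. Concretely, I would confirm the slot count—the $t-1$ shared elements fill $2(t-1)$ peak/valley slots, matching the total number of such slots across all $t$ segments—so that after summation the coefficient on every indicator function is a uniform $2$, making the identification of the left-hand side with $2\chi$ immediate.
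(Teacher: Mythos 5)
Your proposal is correct and follows essentially the same route as the paper: part (1) is the identical subtraction-and-rescaling argument, and part (2) is the same summation of the $t$ congruences from Theorem~\ref{peak and valley anti}, with your explicit double-counting of shared elements being exactly what the paper's identity $\chi=\sum_{i=1}^t\frac{1}{2}(2\ac{(i,1)}+\ac{s_{i-1}}+\ac{s_i})$ encodes via the convention that $\ac{s_0}=\ac{s_t}=0$. The endpoint bookkeeping you flag is handled correctly in both versions.
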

\begin{proof}
(1) Let $x,y\in\uns{i}$. Then
\[
\ac{x}-\ac{y}=\frac{1}{\alpha_i}(\alpha_i\ac{x}+\ac{s_{i-1}}+\ac{s_i})-\frac{1}{\alpha_i}(\alpha_i\ac{y}+\ac{s_{i-1}}+\ac{s_i}),
\]
and since
\[
\alpha_i\ac{x}+\ac{s_{i-1}}+\ac{s_{i}}\equiv 1\quad\text{and}\quad\alpha_i\ac{y}+\ac{s_{i-1}}+\ac{s_i}\equiv 1,
\]
we have $\ac{x}-\ac{y}\equiv 0$.

(2) Assume $\alpha_i=2$ for all $i\in[t]$. Then
\[
\chi=\sum_{x\in F}\ac{x}=\sum_{i=1}^t\frac{1}{2}(2\ac{(i,1)}+\ac{s_{i-1}}+\ac{s_i}),
\]
and since
\[
2\ac{(i,1)}+\ac{s_{i-1}}+\ac{s_i}\equiv 1
\]
for each $i\in[t]$, we have $\chi\equiv\frac{t}{2}$.
\end{proof}

Theorem~\ref{peak and valley anti} will be used to show that $\dim(A_T(F))\geq\sum_{i=1}^t(\alpha_i-1)$. Lemma~\ref{glc ac lemma} below shows that $\dim(A_T(F))\leq \sum_{i=1}^t(\alpha_i-1)$. Part of the argument in the proof of Lemma~\ref{glc ac lemma} is also used in the proof of Lemma~\ref{glc oic lemma} in Section~\ref{oits section}, so we state it separately as Lemma~\ref{peak 0 coeffs} next. 

For the following results, we will be plugging various ideals into equations of type
\[
c+\sum_{x\in F}c_xT_x=f.
\]
To simplify notation, we will denote by $\eq(S)$ the equation
\[
c+\sum_{x\in F}c_xT_x(\langle S\rangle)=f(\langle S\rangle),
\]
where we will opt to omit the braces around the set $S$ for ease of notation. For example, for the fence $F=\breve{F}(3,3,2)$, which is depicted in Figure~\ref{fence ex}, and the statistic $f=\chi$, we have \[\eq(x_2,x_5):c-c_2+c_4-c_5+c_7=2.\]

For a statement $P$, let 
\[ \1(P) = \begin{cases} 1 & \text{if } P \text { is true,} \\
0 & \text{if } P \text { is false.}
\end{cases}\]

\begin{lemma}
Let $F$ be a fence and let $f \in \Span_\R(\{\ac{x}\mid x\in F\})\cup\Span_\R(\{\oic{x}\mid x\in F\})$. If 
\begin{equation}
f = c+\sum_{x\in F}c_xT_x
\label{f as togg}
\end{equation}
for real constants $c$ and $c_x$, $x \in F$, then $c_p=0$ for every peak $p\in F$. 

If, additionally, $f \in \Span_\R(\{\ac{s_i}\mid i\in[t-1]\})\cup\Span_\R(\{\oic{s_i}\mid i\in[t-1]\})$, then also $c_u=0$ for every unshared $u\in F$.
\label{peak 0 coeffs}
\end{lemma}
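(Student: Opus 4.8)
The plan is to prove both claims by strategically plugging specific principal ideals into~\eqref{f as togg} to isolate the coefficients in question. The key observation driving everything is that along a single segment, the toggleability statistics are highly local: for a principal ideal $\langle S\rangle$, an element $x$ contributes $T_x=\pm1$ only when it sits right at the boundary of the ideal within its segment. So by choosing $S$ to place that boundary just before and just after a target element, and subtracting the two resulting equations, I expect the constant $c$ and all ``far away'' contributions to cancel, leaving a clean relation among a few coefficients.

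First, for the peak claim, I would fix a peak $p\in F$ and consider two ideals whose principal generators differ only in whether $p$ is included. Concretely, I would take an ideal $I$ in which $p$ is exactly a maximal element and a second ideal $I'$ obtained by extending past $p$ (toggling $p$ in), chosen so that the only toggleability statistic that changes sign is $T_p$ and the value of $f$ is controlled. Because a peak covers two elements, its toggle-in requires both lower covers to already be in the ideal, which constrains the comparison; the subtraction $\eq(I)-\eq(I')$ should force $c_p$ to appear essentially alone (the neighboring unshared coefficients canceling), and the left-hand side $f$-values will agree because $f$ is a combination of antichain or order-ideal indicators whose relevant values do not distinguish the two configurations at a peak. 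This yields $c_p=0$.

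Next, for the second part, I would assume additionally that $f$ is a linear combination of the shared indicators $\ac{s_i}$ (or $\oic{s_i}$) only, so that $f$ is insensitive to the unshared elements in a precise sense. The strategy is an induction along each segment, working outward from a peak whose coefficient we have just shown is zero. Starting from the fact that $c_p=0$ for peaks, I would plug in a nested family of principal ideals that advance the ideal boundary one unshared element at a time along a segment; each successive difference equation $\eq(S)-\eq(S')$ will relate $c_u$ for the newly exposed unshared element $u$ to coefficients already known to vanish, together with an $f$-difference that is zero because $f$ depends only on shared elements. Propagating this from the peak end across the unshared chain should kill every $c_u$ in turn.

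The main obstacle I anticipate is the careful bookkeeping at the segment boundaries and the valley elements. A valley is a minimal element of two segments, so toggling behavior there couples adjacent segments, and I must be sure that the ideals I construct are genuinely valid order ideals of $F$ and that I have correctly identified which $T_x$ are nonzero; a sign error or an inadmissible ideal would break the cancellation. I would also need to handle the degenerate cases where a segment has length two (so $\uns{i}$ is a single element adjacent to two shared elements) and the endpoint segments $S_1,S_t$, where one ``shared'' neighbor does not exist and the convention $T_{s_0}=T_{s_t}\equiv0$ applies. Once the admissibility and the local nonvanishing pattern of the toggles are pinned down, the difference-of-equations computations are routine, so I would organize the proof around a clean statement of which ideals to test rather than grinding through each arithmetic simplification.
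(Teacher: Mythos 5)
Your overall strategy (plugging principal ideals into~\eqref{f as togg} and subtracting) is the right one, but both of the specific constructions you describe have genuine gaps. For the peak claim, there is no pair of ideals differing only in $p$ for which ``the only toggleability statistic that changes sign is $T_p$'': adding a peak $p$ to an ideal $I$ in which it is togglable necessarily changes $T_x$ and $T_y$ for its two lower covers (they cease to be maximal), so a single difference $\eq(I)-\eq(I')$ produces $2c_p - c_x - c_y$ plus a nonzero $f$-difference --- and the $f$-values do \emph{not} agree, since $\oic{p}$, $\ac{p}$, $\ac{x}$, $\ac{y}$ all change when $p$ is toggled in, and the first part of the lemma allows $f$ to involve any of these. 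The paper never toggles $p$ at all; it uses a \emph{second-order} difference: with $x$ the lower cover of $p$ in $S_i$ and $y$ the lower cover in $S_{i+1}$, it compares $\eq(x,z)-\eq(z)$ against $\eq(x,y,z)-\eq(y,z)$. Adding $x$ changes $f$ by the same constant whether or not $y$ is present, but $T_p$ becomes nonzero only in the second pair (both lower covers present), so subtracting the two first-order differences isolates $c_p$ exactly. You correctly observe that a peak's toggle-in requires both lower covers, but your construction does not exploit this; you need the double difference.

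For the unshared claim, advancing the ideal boundary one unshared element at a time along $\uns{i}$ yields, at each step, the three-term relation $c_{(i,k-1)}-2c_{(i,k)}+c_{(i,k+1)}=0$, not an equation expressing the new coefficient in terms of ones ``already known to vanish.'' Propagating from the peak end (using $c_p=0$) only gives $c_{(i,k)}\propto c_{(i,\beta_i)}$ for all $k$; to conclude the common factor is zero you must also use the boundary equation at the \emph{other} end of the segment, $-2c_{(i,1)}+c_{(i,2)}=0$, which forces $(\beta_i+1)c_{(i,\beta_i)}=0$. The paper solves exactly this tridiagonal system (obtaining $c_{(i,k)}=kc_{(i,1)}$ and then $c_{(i,1)}=0$), after carefully choosing the base ideal $I_0$ (generated by the peaks on one side of $S_i$) so that the valley and peak toggles contribute nothing to any of the differences. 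Your one-directional induction, as stated, stops short of a contradiction and so does not kill any coefficient.
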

\begin{proof}
Let $F=\breve{F}(\alpha_1,\dots,\alpha_t)$, $f \in \Span_\R(\{\ac{s_i}\mid i\in[t-1]\})\cup\Span_\R(\{\oic{s_i}\mid i\in[t-1]\})$, and $p$ be the peak of segments $S_i$ and $S_{i+1}$, for some $i\in[t-1]$. Recall that $\beta_i=\#\uns{i}$ and that $s_{(i,j)}$ is the $j$-th minimal element in $\uns{i}$. Let $c_{(i,j)}$ denote the coefficient corresponding to the element $s_{(i,j)}$ in~\eqref{f as togg}. To find $c_p$, we consider two different cases based on the number of elements in $S_i$.

Case 1: Assume $S_i$ has only two elements. Let $x = \min(S_i)$. Then $x$ is either a valley (if $i\neq 1$) or an unshared element (if $i=1$). In either case,  
\[
\eq(x)-\eq(\emptyset): -2c_x+\1(x\text{ is a valley and $\beta_{i-1}\geq 1$})c_{i-1,1}=C_1,
\]
where $C_1\in \R$ depends on the statistic $f$ and the element $x$. Now, let $y$ denote the element of $S_{i+1}$ covered by $p$. Then 
\[
\eq(x,y)-\eq(y): -2c_x+\1(x\text{ is a valley and $\beta_{i-1}\geq 1$})c_{i-1,1}+c_p=C_1.
\]
Note that the same value $C_1$ appears on the right-hand side since any contribution to the statistic $f$ from the segment $S_{i+1}$ is canceled out when subtracting $\eq(y)$ from $\eq(x,y)$. By subtracting the last two equations, we get $c_p= 0$.

Case 2: Assume $S_i$ has more than two elements. Let $x$ be the element of $S_i$ covered by $p$ and let $z$ be the element covered by $x$. We have that 
\[
\eq(x,z)-\eq(z): -2c_x+c_z+\1(z\text{ is a valley and $\beta_{i-1}\geq 1$})c_{i-1,1}=C_2,
\]
where $C_2 \in \R$ depends on $f, x,$ and $z$. Let $y$ denote the element of $S_{i+1}$ covered by $p$. Then
\[
\eq(x,y,z)-\eq(y,z): -2c_x+c_z+\1(z\text{ is a valley and $\beta_{i-1}\geq 1$})c_{i-1,1}+c_p=C_2,
\]
with the same value $C_2$ on the right-hand side. Subtracting the last two equations gives that $c_p=0$, which shows that $c_p=0$ for every peak $p\in F$.

Now let $f \in \Span_\R(\{\ac{s_i}\mid i\in[t-1]\})\cup\Span_\R(\{\oic{s_i}\mid i\in[t-1]\})$. To find the $c_{(i,j)}$'s, we form a system of equations by subtracting certain equations obtained by closely related ideals. Specifically, fix $i\in[t]$ with $\beta_i\geq 1$. Let $I_0$ be the ideal generated by all the peaks in $F$ in segments $\{1, \dots, i-1\}$ (resp. $\{i+1, \dots, t\}$) if $S_i$ is an up (resp. down) segment.  Let $I_k=\langle I_0\cup s_{(i,k)}\rangle$ for $k\in[\beta_i]$. If $\beta_i > 1$, we use these ideals in~\eqref{f as togg} and subtract consecutive ideals, noting that the right-hand sides will all be zero since the same shared elements appear in each ideal. This process yields the following system of $\beta_i$ equations:
\begin{align}
\eq(I_1)-\eq(I_0)&: -2c_{(i,1)}+c_{(i,2)}=0 \notag\\
\eq(I_2)-\eq(I_1)&: c_{(i,1)}-2c_{(i,2)}+c_{(i,3)}=0 \notag\\
&\vdots \label{system} \\
\eq(I_{\beta_i-1})-\eq(I_{\beta_i-2})&: c_{(i,\beta_i-2)}-2c_{(i,\beta_i-1)}+c_{(i,\beta_i)}=0 \notag\\
\eq(I_{\beta_i})-\eq(I_{\beta_i-1})&: c_{(i,\beta_i-1)}-2c_{(i,\beta_i)}=0.\notag
\end{align}
By using all but the last equation in this system, it is easy to see that we must have
\begin{equation}
c_{(i,k)}=kc_{(i,1)}
\label{mults of min}
\end{equation}
for $k\in[\beta_i]$. Substituting~\eqref{mults of min} into the last equation in~\eqref{system}, we get \[(\beta_i-1)c_{(i,1)}-2\beta_ic_{(i,1)}=0\] and, therefore,
\begin{equation*}
c_{(i,1)}=0.
\end{equation*}
Combining the above equation with~\eqref{mults of min} yields that $c_{(i,k)}=0$ for all $k\in[\beta_i]$.

If $\beta_i=1$, we can similarly obtain
\[ \eq(I_1)-\eq(I_0): -2c_{(i,1)} =0,\]
so in either case,
\begin{equation*}
c_{(i,k)}=0
\end{equation*}
for all $k\in[\beta_i]$. Since $i$ was arbitrary, we conclude that $c_u=0$ for all unshared $u\in F$.
\end{proof}

We now show that $\Span_\R(\{\chi_x\mid x\in F\})$ contains a $(t-1)$-dimensional subspace which has a trivial intersection with $A_T(F)$. Since the total number of elements in a fence is
\[
\#F=\#\{\text{unshared elements of }F\}+\#\{\text{shared elements of } F\}=\left(\sum_{i=1}^t(\alpha_i-1)\right)+(t-1),
\]
this implies that $\dim(A_T(F))\leq \sum_{i=1}^t(\alpha_i-1)$.

\begin{lemma}
Let $\alpha=(\alpha_1,\dots,\alpha_t)$ with corresponding fence $F=\breve{F}(\alpha)$. Then the set $X=\left\{\ac{s_i}\mid 1 \leq i \leq t-1\right\}$ is linearly independent and $\Span_\R(X)\cap A_T(F)=\{0\}.$
\label{glc ac lemma}
\end{lemma}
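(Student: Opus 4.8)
The plan is to prove the two assertions separately: the linear independence of $X$ is immediate, while the triviality of the intersection is where the real work lies. Throughout I write an element of $\Span_\R(X)$ as $f = \sum_{i=1}^{t-1} a_i\ac{s_i}$.

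For linear independence, I would evaluate a hypothetical dependence at the principal ideals of the shared elements. For each $j\in[t-1]$ the ideal $\langle s_j\rangle$ has a unique maximal element, namely $s_j$ itself, so $\ac{s_i}(\langle s_j\rangle) = \1(i=j)$. Hence if $\sum_{i} a_i\ac{s_i}=0$, then evaluating at $\langle s_j\rangle$ forces $a_j=0$ for every $j$, giving independence at once.

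For the intersection, suppose $f\in\Span_\R(X)\cap A_T(F)$; then $f=c+\sum_{x\in F}c_xT_x$ for suitable constants, and the goal is to show $f=0$. Since $f\in\Span_\R(\{\ac{s_i}\mid i\in[t-1]\})$, Lemma~\ref{peak 0 coeffs} applies and kills every peak coefficient and every unshared coefficient, leaving
\[
f = c+\sum_{v}c_vT_v,
\]
where the sum runs over the shared valleys $v$. I will pin down $c$ and all $c_v$. First, $\eq(\emptyset)$ gives $c+\sum_v c_v=0$, since every valley is a minimal element of $F$ and so can be toggled in at $\emptyset$. Next, for a fixed valley $v=s_k$, evaluating at the principal ideal $\langle v\rangle=\{v\}$ (valleys are minimal) and using the previous relation yields $a_k=-2c_v$.

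The crux is a second, independent relation between $a_k$ and $c_v$, obtained from a discrete mixed second difference across the two covers of $v$. Let $u_1,u_2$ be the two elements covering $v$, one in each adjacent segment, and let $B$ be the ideal generated by $v$ together with the other lower covers of $u_1$ and $u_2$ (there are none when a cover is unshared). Then $v$ is maximal in $B$, and both $u_1$ and $u_2$ can be toggled in, so one can form the alternating sum of the identity~\eqref{f as togg} evaluated at the four ideals $B,\ B\cup\{u_1\},\ B\cup\{u_2\},\ B\cup\{u_1,u_2\}$. The key observation is that the only statistic among the $\ac{s_i}$ and the $T_v$ whose value depends jointly on the presence of \emph{both} $u_1$ and $u_2$ is the one attached to their common lower cover $v$: the alternating sum is $+1$ for $\ac{v}$ and $-1$ for $T_v$, while every other term cancels because toggling in $u_1$ and toggling in $u_2$ affect disjoint parts of the poset away from $v$. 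This gives $a_k=-c_v$. Combining with $a_k=-2c_v$ forces $c_v=0$ for every valley, hence $c=0$ by the empty-ideal relation, and therefore $f=c+\sum_v c_vT_v=0$, as desired.

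The main obstacle is verifying the cancellation in the mixed second difference, namely that no indicator or toggleability statistic other than the one at $v$ is jointly sensitive to both $u_1$ and $u_2$. This requires tracking which statistics change when a single cover is toggled in: those attached to $u_i$ itself (relevant when $u_i$ is a shared peak, which happens exactly when an adjacent segment has only two elements) and those at its lower covers. Each of these depends on only one of the two toggles and therefore drops out of the alternating sum, whose common contributions reduce to $v$ alone. Carrying out this local bookkeeping cleanly is what makes the argument uniform over all fences, including those with interior segments of length two.
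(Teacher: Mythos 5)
Your proof is correct, and while the first half (linear independence via the principal ideals $\langle s_j\rangle$) and the setup (invoking Lemma~\ref{peak 0 coeffs} to kill the peak and unshared coefficients, then deriving $a_k=-2c_{s_k}$ from $\eq(s_k)-\eq(\emptyset)$) coincide with the paper, the way you finish is genuinely different. The paper obtains a second set of constraints globally: it evaluates at the full ideal $F$, at $F\setminus\langle s_i\rangle$ for peaks, and at $F\setminus(S_i\cup S_{i+1})$ for valleys, splits into cases by the parity of $i$, and shows each coefficient $b_i$ vanishes in turn (peaks first, then valleys). You instead extract the second relation $a_k=-c_{s_k}$ purely locally, via the mixed second difference over $B$, $B\cup\{u_1\}$, $B\cup\{u_2\}$, $B\cup\{u_1,u_2\}$ at each valley $v$; the cancellation you flag as the main obstacle does go through, because each $\ac{x}$ and $T_x$ depends only on membership of $x$ and its immediate neighbors, and $v$ is the unique common neighbor of $u_1$ and $u_2$ in a fence (they lie in different segments, are incomparable, and share no other cover or lower cover). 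A pleasant feature of your route is that you never need to determine the peak coefficients of $f$ individually: once all $c_{s_k}=0$ and $c=0$, the function $f=c+\sum_v c_{s_v}T_{s_v}$ is identically zero, which is all the lemma requires. The trade-off is that the paper's global evaluations are mechanical to verify, whereas your inclusion--exclusion requires the (correct, but more delicate) joint-sensitivity argument, including the edge cases where a cover $u_i$ is itself a shared peak because an adjacent segment has $\alpha=1$.
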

\begin{proof}
Let $i\in[t-1]$. Note that $\ac{s_i}(\langle s_i\rangle)=1$ and $\ac{s_j}(\langle s_i\rangle)=0$ for $j\neq i$. Therefore, $X$ is linearly independent.

Let $g\in\Span_\R(X)\cap A_T(F)$. Then 
\begin{equation*}
g = b_1\ac{s_1} + \dots + b_{t-1}\ac{s_{t-1}},
\end{equation*}
for some $b_1,\dots, b_{t-1} \in \R$ and there are real numbers $c$ and $c_x$, $x\in F$ such that
\begin{equation}
g=c+\sum_{x\in F}c_xT_x.
\label{ac g as togg}
\end{equation}

We begin by finding $c_x$ for each $x\in F$. By Lemma~$\ref{peak 0 coeffs}$, $c_p=0$ for every peak $p$ and $c_u=0$ for every unshared $u\in F$, so we only need to determine the coefficients corresponding to the valleys. We do this by plugging in different ideals in~\ref{ac g as togg}.

Let $s_i$ be a valley and observe that
\begin{equation*}
\eq(s_i)-\eq(\emptyset):-2c_{s_i}=b_i,
\end{equation*}
so that for a valley $s_i$:
\begin{equation} c_{s_i}=-\frac{b_i}{2}.\label{ac valley coeffs}\end{equation}

We now argue that $b_i=0$ for all $i\in[t-1]$. We consider two cases based on the parity of $i \in[t-1]$. In both cases, we will use the equation
\begin{equation}
\eq(F): c=\sum_{\substack{i=1\\i\text{ odd}}}^{t-1}b_i,
\label{ac eq full}
\end{equation}
which follows from the fact that the set $\max(F)$ consists of the peaks in $F$ and, if $t$ is odd, the maximal element of the last segment.

Case 1: Assume $i$ is odd so that $s_i$ is a peak. The equation $\eq(F\setminus\langle s_i\rangle)$ is
\begin{equation}
c=\sum_{\substack{j=1\\j\text{ odd}\\j\neq i}}^{t-1}b_j.
\label{ac eq full-peak}
\end{equation}
Subtracting~\eqref{ac eq full-peak} from~\eqref{ac eq full} yields $b_i=0$.

Case 2: Let $i\in[t-1]$ with $i$ even. Then $\eq(F\setminus (S_i\cup S_{i+1}))$ is 
\[
c_{s_i}+c=\sum_{\substack{j=1\\j\text{ odd}\\j\neq i-1,i+1}}^{t-1}b_j
\]
and subtracting~\eqref{ac eq full} yields
\begin{equation*}
c_{s_i}=-b_{i-1}-b_{i+1},
\end{equation*}
where $b_i\coloneqq 0$ for $i\notin[t-1]$. 
Since $i$ is even, $i-1$ and $i+1$ are odd and thus zero by either Case 1 or definition if $i+1\notin[t-1]$, so $c_{s_i}=0$. By~\eqref{ac valley coeffs}, this gives that $b_i=0$ for all even $i$, and hence every $i\in[t-1]$. We conclude that $\Span_\R(X)\cap A_T(F)=\{0\}$, as desired.
\end{proof}

\begin{theorem}
Let $\alpha=(\alpha_1,\dots,\alpha_t)$ with corresponding fence $F=\breve{F}(\alpha)$. A basis for $A_T(F)$ is given by
\[
\BBB_A(F)=\bigcup_{i=1}^t\{\alpha_i\ac{x}+\ac{s_{i-1}}+\ac{s_i}\mid x\in\uns{i}\},
\]
and thus 
\[
\dim(A_T(F))=\sum_{i=1}^t(\alpha_i-1).
\]
\label{ac togg dim}
\end{theorem}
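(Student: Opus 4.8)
The plan is to assemble the theorem directly from the three results already in hand. First I would verify the inclusion $\BBB_A(F)\subseteq A_T(F)$: each generator $\alpha_i\ac{x}+\ac{s_{i-1}}+\ac{s_i}$ lies in $\Span_\R(\{\ac{p}\mid p\in F\})$ by construction, and Theorem~\ref{peak and valley anti} shows it is $\equiv 1$, hence $\equiv$ const and therefore homomesic. This places it in $A_H(F)$, and being $\equiv$ const then places it in $A_T(F)$ by the very definition of $A_T(F)$.

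Next I would establish linear independence of $\BBB_A(F)$, which yields the lower bound $\dim(A_T(F))\geq\sum_{i=1}^t(\alpha_i-1)$, since $|\BBB_A(F)|=\sum_{i=1}^t\#\uns{i}=\sum_{i=1}^t(\alpha_i-1)$. The key observation is that the antichain indicator functions $\{\ac{x}\mid x\in F\}$ are themselves linearly independent, because $\max(\langle y\rangle)=\{y\}$ gives $\ac{x}(\langle y\rangle)=\1(x=y)$. Suppose then that a linear combination $\sum_{i=1}^t\sum_{x\in\uns{i}}\lambda_{i,x}(\alpha_i\ac{x}+\ac{s_{i-1}}+\ac{s_i})$ is the zero statistic. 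Each unshared element $x\in\uns{i}$ appears in exactly one generator, namely the one indexed by $x$, and does so with coefficient $\alpha_i>0$; every other generator contributes only shared indicators $\ac{s_j}$. Reading off the coefficient of $\ac{x}$ therefore forces $\alpha_i\lambda_{i,x}=0$, so $\lambda_{i,x}=0$ for every $i\in[t]$ and every $x\in\uns{i}$. Hence $\BBB_A(F)$ is linearly independent.

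Finally, Lemma~\ref{glc ac lemma} supplies the matching upper bound: the element count $\#F=\big(\sum_{i=1}^t(\alpha_i-1)\big)+(t-1)$, together with the fact that the $(t-1)$-dimensional subspace $\Span_\R(\{\ac{s_i}\mid 1\leq i\leq t-1\})$ meets $A_T(F)$ trivially, gives $\dim(A_T(F))\leq\sum_{i=1}^t(\alpha_i-1)$. Combining the two bounds yields $\dim(A_T(F))=\sum_{i=1}^t(\alpha_i-1)$, and since $\BBB_A(F)$ is a linearly independent subset of $A_T(F)$ of exactly this cardinality, it is a basis.

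I expect no genuine obstacle to remain at this stage: the substantive work has already been absorbed into Theorem~\ref{peak and valley anti}, which produces the explicit toggle expansions, and into Lemma~\ref{glc ac lemma}, which pins down the upper bound. The only points demanding any care are confirming the cardinality $|\BBB_A(F)|=\sum_{i=1}^t(\alpha_i-1)$ and checking that the linear-independence argument isolates each coefficient $\lambda_{i,x}$ cleanly; both are immediate once one notes that each unshared element appears in a single generator, so the theorem is essentially a clean dimension-count assembling the preceding lemmas.
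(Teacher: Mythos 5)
Your proposal is correct and follows essentially the same route as the paper: inclusion via Theorem~\ref{peak and valley anti}, a lower bound from linear independence of $\BBB_A(F)$, and the matching upper bound from Lemma~\ref{glc ac lemma} together with the element count $\#F=\sum_{i=1}^t(\alpha_i-1)+(t-1)$. Your linear-independence step (first showing the $\ac{x}$ are independent by evaluating at principal ideals, then extracting the coefficient of $\ac{x}$) is just a mild repackaging of the paper's argument, which evaluates the putative dependence directly at $I=\langle x\rangle$.
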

\begin{proof}
By Theorem~\ref{peak and valley anti}, $\BBB_A(F) \subset A_T(F)$.
Note that for any $i\in[t]$ and $x \in \uns{i}$, all statistics in  $\mathcal{B}_A(F)$ but $\alpha_i\chi_x+\chi_{s_{i-1}}+\chi_{s_i}$ are zero when evaluated at $I=\langle x \rangle$. Therefore, $\mathcal{B}_A(F)$ is linearly independent and
\[
\dim(A_T(F))\geq\#\BBB_A(F)=\sum_{i=1}^t(\alpha_i-1).
\]
Furthermore, by Lemma~\ref{glc ac lemma},  $\Span_\R(\{\ac{x}\mid x\in F\})$ contains a $(t-1)$-dimensional subspace that trivially intersects $A_T(F)$, which implies
\[
\dim(A_T(F))\leq\#F-(t-1)=\sum_{i=1}^t\alpha_i-1-(t-1)=\sum_{i=1}^t(\alpha_i-1),
\]
which yields the result.
\end{proof}


\section{Order Ideal Toggleability Space}
\label{oits section}

In this section, we use the same approach as in Section~\ref{acts section} to describe $I_T(F)$ of a fence $F=\breve{F}(\alpha_1,\dots,\alpha_t)$ in Theorem~\ref{oic togg dim}.  Recall that $s_{(i,j)}$ is the $j$-th minimal element of $\uns{i}$ and that $\oic{(i,j)}$ and $T_{(i,j)}$ are the order ideal indicator function and toggleability statistic, respectively, corresponding to $s_{(i,j)}$. In Theorem~\ref{oic basis equiv const}, we show that each element of
\[
\BBB_I(F):=\bigcup_{i=1}^t\bigcup_{j=1}^{\beta_i}\{\alpha_i\hat{\chi}_{(i,j)}-j\hat{\chi}_p-(\alpha_i-j)\hat{\chi}_v\mid p\text{ peak of }S_i,~v\text{ valley of }S_i\},
\]
is in $I_T(F)$. The fact that $\BBB_I(F)$ is linearly independent gives a lower bound on $\dim(I_T(F))$. Lemma~\ref{glc oic lemma} gives an upper bound on $\dim(I_T(F))$. 

\begin{theorem}
Let $\alpha=(\alpha_1,\dots,\alpha_t)$ with corresponding fence $F=\breve{F}(\alpha)$. Let $i\in[t]$ with $\alpha_i\geq 2$, $v$ be the valley of $S_i$ (if it exists) and $p$ be the peak of $S_i$ (if it exists). Then, for each $j\in [\beta_i]$,
\begin{multline}
\alpha_i\hat{\chi}_{(i,j)}-j\hat{\chi}_p-(\alpha_i-j)\hat{\chi}_v=\1(S_i\text{ has no valley})(\alpha_i-j)
\\-(\alpha_i-j)\sum_{s_{(i,1)}\ltrieq u\ltrieq s_{(i,j)}}\#\left[s_{(i,1)},u\right]T_u-j\sum_{s_{(i,j)}\ltri u\ltrieq s_{(i,\beta_i)}}\#\left[u,s_{(i,\beta_i)}\right]T_u,
\label{oits basis}
\end{multline}
\label{oic basis equiv const}
\end{theorem}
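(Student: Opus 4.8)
The plan is to follow the same strategy as in the proof of Theorem~\ref{peak and valley anti}: fix an ideal $I\in\JJJ(F)$ and verify \eqref{oits basis} pointwise by a case analysis on how $I$ meets the segment $S_i$. The observation that makes this tractable is that both sides depend only on $I\cap S_i$, and in fact the right-hand side involves \emph{only toggleability statistics of unshared elements of $S_i$}: the two sums range over $u\in\{s_{(i,1)},\dots,s_{(i,j)}\}$ and $u\in\{s_{(i,j+1)},\dots,s_{(i,\beta_i)}\}$, and each such $T_{(i,k)}(I)$ is determined by $I\cap S_i$ alone. Moreover the interval sizes simplify, since $[s_{(i,1)},s_{(i,k)}]=\{s_{(i,1)},\dots,s_{(i,k)}\}$ and $[s_{(i,k)},s_{(i,\beta_i)}]=\{s_{(i,k)},\dots,s_{(i,\beta_i)}\}$, giving $\#[s_{(i,1)},s_{(i,k)}]=k$ and $\#[s_{(i,k)},s_{(i,\beta_i)}]=\beta_i-k+1$. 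Rewriting the right-hand side with these values and recalling $\alpha_i=\beta_i+1$ reduces the verification to elementary arithmetic.

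Next I would parametrize $I\cap S_i$. Since $S_i$ is a chain
\[
[\,v\,]\ltri s_{(i,1)}\ltri\cdots\ltri s_{(i,\beta_i)}\ltri[\,p\,]
\]
(with $v$, resp.\ $p$, present only if $S_i$ has a valley, resp.\ peak), the set $I\cap S_i$ is a down-set of this chain, recorded by the largest index $m$ with $s_{(i,m)}\in I$ (set $m=0$ if no unshared element lies in $I$) together with the indicators $\1(v\in I)$ and $\1(p\in I)$. From the definition of $T_{(i,k)}$ I would read off that the only unshared element that can be toggled out is the top one $s_{(i,m)}$ in $I$, contributing $-1$ unless $m=0$ or ($m=\beta_i$ and $p\in I$), while the only one that can be toggled in is $s_{(i,m+1)}$, contributing $+1$ when all its lower covers already lie in $I$; every other $T_{(i,k)}(I)$ vanishes. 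The boundary behaviour is exactly where the valley/peak flags enter: at $m=0$ the element $s_{(i,1)}$ is toggleable in only if $v\in I$ or $S_i$ has no valley, and at $m=\beta_i$ the element $s_{(i,\beta_i)}$ is toggleable out only if $p\notin I$ or $S_i$ has no peak.

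With this bookkeeping in place I would run through the cases, comparing against the left-hand value $\alpha_i\1(s_{(i,j)}\in I)-j\1(p\in I)-(\alpha_i-j)\1(v\in I)$. The extreme configurations are immediate: when $S_i$ has no valley and $m=0$ the constant $\alpha_i-j$ is exactly cancelled by the single toggle-in term at $s_{(i,1)}$, and when the whole chain including $p$ lies in $I$ both sides equal $0$ (resp.\ $\alpha_i-j$) according to whether the valley exists. The genuinely interesting case is the generic one $1\le m=k\le\beta_i-1$, where $T_{(i,k)}=-1$ and $T_{(i,k+1)}=+1$ are the only nonzero toggle terms; the answer then depends on whether the cut lies below $j$ ($k\le j-1$), at $j$ ($k=j$), or above $j$ ($k\ge j+1$), since this decides which of the two sums each toggle term falls into. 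I expect the transitional subcase $k=j$ to be the main obstacle: there one toggle term sits in the first sum (weight $\alpha_i-j$, interval size $j$) and the other in the second sum (weight $j$, interval size $\beta_i-j$), and checking that they combine to the required value amounts to the identity $(\alpha_i-j)j-j(\beta_i-j)=j$ (adding the constant $\alpha_i-j$ gives $\alpha_i$ in the no-valley analogue), which is precisely where $\alpha_i=\beta_i+1$ is used. The remaining subcases then collapse by the telescopings $-k+(k+1)=1$ and $-(\beta_i-k+1)+(\beta_i-k)=-1$, and matching the left-hand side in every configuration completes the argument.
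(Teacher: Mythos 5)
Your proposal is correct and follows essentially the same route as the paper: a pointwise verification of \eqref{oits basis} via a case analysis on $I\cap S_i$, with the no-valley case handled by the extra constant $\1(S_i\text{ has no valley})(\alpha_i-j)$. Your organization by the cut position $m$ (rather than by the value of the left-hand side, as in the paper) is only a cosmetic difference, and the key arithmetic you isolate --- the identity $(\alpha_i-j)j-j(\beta_i-j)=j$ at the transition $m=j$ and the telescopings within each sum --- is exactly what makes the paper's subcases close.
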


\begin{proof}
Assume first that $S_i$ has a valley so that $\1(S_i\text{ has no valley})=0$. For a given ideal $I$, the left-hand side (LHS) of~\eqref{oits basis} has possible values:
\begin{equation}
\big(\alpha_i\hat{\chi}_{(i,j)}-j\hat{\chi}_p-(\alpha_i-j)\hat{\chi}_v\big)(I)=\begin{cases}
j&\text{if }v,s_{(i,j)}\in I\text{ and }p\notin I\\
-(\alpha_i-j)&\text{if }v\in I\text{ and }s_{(i,j)},p\notin I,\\
0&\text{if }v,s_{(i,j)},p\notin I\text{ or }v,s_{(i,j)},p\in I.
\end{cases}
\label{oic valley LHS}
\end{equation}

We now compute the value of the right-hand side (RHS) of~\eqref{oits basis} by considering 3 cases.

Case 1: Let $v,s_{(i,j)}\in I$ and $p\notin I$. There are two subcases to consider based on whether or not $s_{(i,j)}\in\max(I)$.

Case 1a: Assume $s_{(i,j)}\in\max(I)$, so $T_{(i,j)}=-1$. If $s_{(i,j)}$ is the maximal element in $\uns{i}$, then $j=\beta_i$ and $T_{(i,j)}$ is the only nonzero toggleability statistic on the RHS, which therefore simplifies to 
\[
-(\alpha_i-\beta_i)\#\left[s_{(i,1)},s_{(i,j)}\right]T_{(i,j)}=j.
\]
If $s_{(i,j)}$ is not the maximal element in $\uns{i}$, then $s_{(i,j)}\ltri s_{(i,j+1)}\ltrieq s_{(i,\beta_i)}$ and $T_{(i,j+1)}=1$, so the RHS reduces to
\begin{equation*}
-(\alpha_i-j)\#\left[s_{(i,1)},s_{(i,j)}\right]T_{(i,j)}-j\#\left[s_{(i,j+1)},s_{(i,\beta_i)}\right]T_{(i,j+1)}=j.
\end{equation*}

Case 1b: Assume $s_{(i,j)}\notin\max(I)$. Then $s_{(i,k)}\in \max(I)$ for some $j<k\leq \beta_i$. Note that $s_{(i,j)}\ltri s_{(i,k)}\ltrieq s_{(i,\beta_i)}$ and $T_{(i,k)}=-1$. If $k=\beta_i$, then $T_{(i,k)}$ is the only nonzero toggleability statistic on the RHS, which simplifies to
\begin{equation*}
-j\#\left[s_{(i,\beta_i)},s_{(i,\beta_i)}\right]T_{(i,j)}=j.
\end{equation*}
If $k\neq\beta_i$, then $s_{(i,k)}\ltri s_{(i,k+1)}\ltrieq s_{(i,\beta_i)}$ and $T_{(i,k+1)}=1$. The RHS reduces to
\begin{equation*}
-j\left(\#\left[s_{(i,k)},s_{(i,\beta_i)}\right]T_{(i,k)}+\#\left[s_{(i,k+1)},s_{(i,\beta_i)}\right]T_{(i,k+1)}\right)=j.
\end{equation*}
In all possible subcases, we see that the RHS has value $j$ if $v,s_{(i,j)}\in I$ and $p\notin I$.

Case 2: Let $v\in I$ and $s_{(i,j)},p\notin I$. If $\uns{i}\cap \max(I)=\emptyset$ or $\uns{i}\cap \max(I)=\{v\}$, then $T_{(i,1)}=1$, and the RHS is thus
\begin{equation*}
-(\alpha_i-j)\#\left[s_{(i,1)},s_{(i,1)}\right]T_{(i,1)}=-(\alpha_i-j).
\end{equation*}
Otherwise, let $s_{(i,k)}\in\uns{i}\cap \max(I)$. Note that $s_{(i,1)}\ltrieq s_{(i,k)}\ltri s_{(i,k+1)}\ltrieq s_{(i,j)}$, $T_{(i,k)}=-1$, and $T_{(i,k+1)}=1$. The RHS is then
\begin{equation*}
-(\alpha_i-j)\left(\#\left[s_{(i,1)},s_{(i,k)}\right]T_{(i,k)}+\#\left[s_{(i,1)},s_{(i,k+1)}\right]T_{(i,k+1)}\right)=-(\alpha_i-j),
\end{equation*}
so the RHS is equal to $-(\alpha_i-j)$ when $v\in I$ and $s_{(i,j)},p\notin I$.

Case 3: Assume $v,s_{(i,j)},p\notin I$ or $v,s_{(i,j)},p\in I$. For both of these assumptions, all of the toggleability statistics on the RHS are zero, so the RHS has value 0.

Summarizing the values of the RHS in the three cases above and comparing them with~\eqref{oic valley LHS} gives the result when $S_i$ has a valley.

For the remainder of the proof, we assume that $S_i$ has no valley. For a given ideal $I$, the LHS has possible values:
\begin{equation}
\big(\alpha_i\hat{\chi}_{(i,j)}-j\hat{\chi}_p\big)(I)=\begin{cases}
\alpha_i-j&\text{if }s_{(i,j)},p\in I\\
\alpha_i&\text{if }s_{(i,j)}\in I\text{ and }p\notin I,\\
0&\text{if }s_{(i,j)},p\notin I.
\end{cases}
\label{oic no valley LHS}
\end{equation} 

If $s_{(i,j)},p\in I$, all toggleability statistics on the RHS are zero, so the RHS reduces to $\alpha_i-j$ since $S_i$ has no valley.

If $s_{(i,j)}\in I$ and $p\notin I$, note that the argument given in Case 1 above gives $j$ as the value of the RHS when ignoring the contribution of $(\alpha_i-j)$ that occurs due to $S_i$ having no valley. The RHS thus has value $j+(\alpha_i-j)=\alpha_i$.

If $s_{(i,j)},p\notin I$, the argument in Case 2 gives $-(\alpha_i-j)$ as the value for the RHS when excluding the contribution of $(\alpha_i-j)$ from $S_i$ not having a valley. The RHS thus has value 0.

Summarizing the above and comparing to~\eqref{oic no valley LHS} yields the result when $S_i$ has no valley, concluding the proof.
\end{proof}

As a consequence of Theorem~\ref{oic basis equiv const}, we obtain many new statistics which are homomesic under rowmotion. In~\cite{eliz:rof}, it was shown that the statistic $k\oic{1,j}-j\oic{1,k}$ is $(k-j)$-mesic. As we show in the following corollary, this homomesy follows from Theorem~\ref{oic basis equiv const} and can further be extended to statistics from the last segment as well. 
\begin{cor}
Let $\alpha=(\alpha_1,\dots,\alpha_t)$ with corresponding fence $F=\breve{F}(\alpha)$. If $S_i$ does not have a valley, then
\[
k\oic{(i,j)}-j\oic{(i,k)}\equiv k-j.
\]
If $S_i$ does not have a peak, then
\[
(\alpha_i-k)\oic{(i,j)}-(\alpha_i-j)\oic{(i,k)}\equiv 0.
\]
\label{oi homomesies cor}
\end{cor}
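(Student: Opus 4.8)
The plan is to derive both identities directly from Theorem~\ref{oic basis equiv const} by taking two instances of~\eqref{oits basis} (for the indices $j$ and $k$) and forming the $\R$-linear combination that cancels the single surviving peak or valley indicator function. Recall that by the conventions in the Notation remark, if $S_i$ has no valley then $\oic{v}$ is identically zero, while if $S_i$ has no peak then $\oic{p}$ is identically zero. Since functions that are $\equiv$ const are closed under $\R$-linear combinations, it suffices to combine the two instances with suitable scalar weights and then divide through by $\alpha_i$, which is legitimate because $j,k\in[\beta_i]$ forces $\beta_i\geq 1$, hence $\alpha_i=\beta_i+1\geq 2\neq 0$.

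Suppose first that $S_i$ has no valley. Then $\1(S_i\text{ has no valley})=1$ and $\oic{v}$ is identically zero, so~\eqref{oits basis} specializes to $\alpha_i\oic{(i,j)}-j\oic{p}\equiv\alpha_i-j$ for each index. Writing this for $j$ and for $k$ and computing $k$ times the first relation minus $j$ times the second cancels the $\oic{p}$ terms and yields $\alpha_i\big(k\oic{(i,j)}-j\oic{(i,k)}\big)\equiv k(\alpha_i-j)-j(\alpha_i-k)=\alpha_i(k-j)$; dividing by $\alpha_i$ gives the first identity.

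Now suppose $S_i$ has no peak. Then $\1(S_i\text{ has no valley})=0$ and $\oic{p}$ is identically zero, so~\eqref{oits basis} specializes to $\alpha_i\oic{(i,j)}-(\alpha_i-j)\oic{v}\equiv 0$. Taking $(\alpha_i-k)$ times the instance for $j$ minus $(\alpha_i-j)$ times the instance for $k$ cancels the $\oic{v}$ terms and, after dividing by $\alpha_i$, produces $(\alpha_i-k)\oic{(i,j)}-(\alpha_i-j)\oic{(i,k)}\equiv 0$. The only point that requires care beyond this routine elimination is the geometric fact that in a fence with $t\geq 2$ a single segment cannot simultaneously lack both a peak and a valley (interior segments have both, and each end segment is missing exactly one). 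This guarantees that in each of the two cases precisely one of $\oic{p}$, $\oic{v}$ drops out and that the term $\1(S_i\text{ has no valley})(\alpha_i-j)$ takes the value asserted above, so that the two cancellations are indeed the correct ones.
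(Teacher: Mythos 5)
Your proposal is correct and follows essentially the same route as the paper: both proofs express the target statistic as the linear combination $\tfrac{k}{\alpha_i}(\alpha_i\oic{(i,j)}-j\oic{p})-\tfrac{j}{\alpha_i}(\alpha_i\oic{(i,k)}-k\oic{p})$ (resp. the analogous combination with $\oic{v}$) and invoke Theorem~\ref{oic basis equiv const} together with closure of $\equiv$ const under linear combinations. Your additional remarks about $\alpha_i\neq 0$ and about a segment never lacking both a peak and a valley are correct but not needed beyond what the paper already records.
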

\begin{proof}
Let $S_i$ be a segment with no valley, and let $p$ be the peak of $S_i$. Observe that
\begin{equation*}
k\oic{(i,j)}-j\oic{(i,k)}=\frac{k}{\alpha_i}(\alpha_i\oic{(i,j)}-j\oic{p})-\frac{j}{\alpha_i}(\alpha_i\oic{(i,k)}-k\oic{p}).
\end{equation*}
By Theorem~\ref{oic basis equiv const}, we have that $\alpha_i\oic{(i,j)}-j\oic{p}\equiv\alpha_i-j$ and $\alpha_i\oic{(i,k)}-k\oic{p}\equiv \alpha_i-k$, so
\[
k\oic{(i,j)}-j\oic{(i,k)}\equiv \frac{k}{\alpha_i}(\alpha_i-j)-\frac{j}{\alpha_i}(\alpha_i-k)
= k-j.
\]
Now, let $S_i$ be a segment with valley $v$ and no peak. Then
\[
(\alpha_i-k)\oic{(i,j)}-(\alpha_i-j)\oic{(i,k)}=\frac{\alpha_i-k}{\alpha_i}(\alpha_i\oic{(i,j)}-(\alpha_i-j)\oic{v})-\frac{\alpha_i-j}{\alpha_i}(\alpha_i\oic{(i,k)}-(\alpha_i-k)\oic{v}).
\]
By Theorem~\ref{oic basis equiv const}, $\alpha_i\oic{(i,j)}-(\alpha_i-j)\oic{v}\equiv 0$ and $\alpha_i\oic{(i,k)}-(\alpha_i-k)\oic{v}\equiv 0$, which gives
\[
(\alpha_i-k)\oic{(i,j)}-(\alpha_i-j)\oic{(i,k)}\equiv 0.
\]
\end{proof}

\begin{lemma}
Let $\alpha=(\alpha_1,\dots,\alpha_t)$ with corresponding fence $F=\breve{F}(\alpha)$. Then the set $Y=\{\oic{s_i}\mid i\in[t-1]\}$
is linearly independent and $\Span_\R(Y)\cap I_T(F)=\{0\}$.
\label{glc oic lemma}
\end{lemma}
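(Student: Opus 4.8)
The plan is to follow the structure of the proof of Lemma~\ref{glc ac lemma}, replacing the antichain indicators by order ideal indicators. For linear independence, I would evaluate a hypothetical relation $\sum_{i=1}^{t-1}b_i\oic{s_i}=0$ at the principal ideals $\langle s_k\rangle$. For a valley $s_k$ one has $\langle s_k\rangle=\{s_k\}$ (valleys are minimal), so the evaluation returns $b_k$ and forces $b_k=0$ at once. For a peak $s_k$, the only shared elements below it are $s_k$ itself and the adjacent valleys $s_{k-1},s_{k+1}$, so the evaluation returns $b_k+b_{k-1}+b_{k+1}$; once the valley coefficients are known to vanish this gives $b_k=0$. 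This triangular system shows $Y$ is linearly independent.

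For the intersection, take $g=\sum_{i=1}^{t-1}b_i\oic{s_i}\in I_T(F)$, so $g=c+\sum_{x\in F}c_xT_x$ for some constants. By Lemma~\ref{peak 0 coeffs}, $c_p=0$ for every peak and $c_u=0$ for every unshared element, leaving only valley coefficients. Exactly as in Lemma~\ref{glc ac lemma}, evaluating at $\emptyset$ and at $\{s_j\}=\langle s_j\rangle$ forces $c_{s_j}=-b_j/2$ for each valley $s_j$ and $c=\tfrac12\sum_{\text{valleys}}b_j$. Substituting these, and using that a valley $s_j$ satisfies $T_{s_j}(I)=1$ iff $s_j\notin I$, $T_{s_j}(I)=-1$ iff $s_j\in\max(I)$, and $T_{s_j}(I)=0$ iff $s_j\in I$ has a cover lying in $I$, the identity $g=c+\sum_x c_xT_x$ collapses, for every ideal $I$, to the condition
\begin{equation*}
\sum_{\substack{s_i\in I\\ s_i\text{ a peak}}} b_i+\frac12\sum_{\substack{s_j\in I,\ s_j\notin\max(I)\\ s_j\text{ a valley}}} b_j=0.\tag{$\star\star$}
\end{equation*}

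I would then force all $b_i$ to vanish by plugging suitable ideals into $(\star\star)$. For a peak $s_i$, the ideal $\langle s_i\rangle$ contains exactly the peak $s_i$ among shared elements and buries the adjacent valleys $s_{i-1},s_{i+1}$ (they become non-maximal), so $(\star\star)$ reads $b_i+\tfrac12(b_{i-1}+b_{i+1})=0$, with the convention $b_0=b_t=0$. This reduces everything to the valley coefficients. For a valley $s_j$ covered by an \emph{unshared} element $u$ (in particular the boundary valley $s_{t-1}$ when $t$ is odd, since $\alpha_t\geq 2$ supplies such a $u$ in $S_t$), the ideal $\langle u\rangle=\{s_j,u\}$ contains no peak and buries $s_j$, so $(\star\star)$ gives $\tfrac12 b_j=0$.

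The main obstacle is a valley $s_j$ both of whose covers are peaks; this occurs precisely when $\alpha_j=\alpha_{j+1}=1$, and then $s_j$ cannot be buried by any ideal avoiding all peaks, so the previous trick fails. Since $\alpha_1,\alpha_t\geq 2$, such a valley is necessarily interior, so both $s_{j-1}$ and $s_{j+1}$ are peaks. The resolution is to compare the three ideals $\langle s_{j-1}\rangle$, $\langle s_{j+1}\rangle$, and $\langle\{s_{j-1},s_{j+1}\}\rangle$: in the sum of the first two peak equations the buried valley $s_j$ is counted twice (contributing $b_j$), whereas in the union ideal $s_j$ is buried only once (contributing $\tfrac12 b_j$), so subtracting yields $\tfrac12 b_j=0$. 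With every valley coefficient now zero, the peak relations give $b_i=0$ for each peak as well; hence $g=0$, establishing $\Span_\R(Y)\cap I_T(F)=\{0\}$.
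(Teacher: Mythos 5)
Your proposal is correct, and its first half (linear independence via the principal ideals $\langle s_k\rangle$, and the reduction of the intersection claim to the valley coefficients via Lemma~\ref{peak 0 coeffs} together with $c_{s_j}=-b_j/2$) coincides with the paper's argument. Where you diverge is in the endgame. The paper first kills the peak coefficients $b_i$ ($i$ odd) by comparing $\eq(F)$ with $\eq(F\setminus\langle s_i\rangle)$, and then kills the valley coefficients by evaluating at the large ideal $\langle(F\setminus(S_i\cup S_{i+1}))\cup\{s_i\}\rangle$, which isolates $c_{s_i}=b_{i-1}+b_{i+1}$. You instead package the remaining constraint into the clean identity $(\star\star)$, eliminate the valley coefficients \emph{first} using small test ideals ($\langle u\rangle$ for an unshared cover $u$ of the valley, and an inclusion--exclusion comparison of $\langle s_{j-1}\rangle$, $\langle s_{j+1}\rangle$, $\langle\{s_{j-1},s_{j+1}\}\rangle$ for a valley squeezed between two peaks, which is exactly the degenerate case $\alpha_j=\alpha_{j+1}=1$ that a naive small-ideal argument would miss), and only then read off the peak coefficients from $b_i+\tfrac12(b_{i-1}+b_{i+1})=0$. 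Both routes are sound; yours has the advantage that $(\star\star)$ makes the bookkeeping of which shared elements contribute completely transparent and uses only ideals supported near the element in question, at the cost of needing the separate two-peaks-covering-a-valley case, which the paper's choice of large ideals avoids having to single out.
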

\begin{proof}
Let $a_i\in\R$ for all $i\in[t-1]$, such that 
\begin{equation}
a_1\oic{s_1}+\cdots+a_{t-1}\oic{s_{t-1}}=0.
\label{lin comb of f zero oic}
\end{equation}
We proceed by showing $a_i=0$ for all $i\in [t-1]$. 

Observe that $\oic{s_i}(\langle s_i\rangle)\neq 0$ for all $i\in[t-1]$. Let $i\in[t-1]$ with $i$ even so that $s_i$ is a valley. Then $\oic{s_j}(\langle s_i\rangle)=0$ for $j\neq i$, so we must have $a_i=0$. Now assume that $i\in[t-1]$ with $i$ odd so that $s_i$ is a peak. Observe that $\oic{s_j}(\langle s_i\rangle)=0$ for all $j\leq i-2$ and all $j\geq i+2$. Additionally, if $i-1$ and $i+1$ are in $[t-1]$, they are even by the assumption on $i$, and hence $a_{i-1}=a_{i+1}=0$. To satisfy~\eqref{lin comb of f zero oic}, we must have $a_i=0$. This shows $a_i=0$ for all odd $i$, so $a_i=0$ for all $i\in[t-1]$. Therefore, $Y$ is linearly independent.

Let $g\in\Span_\R(Y)\cap I_T(F)$. Then 
\begin{equation*}
g = b_1\oic{s_1} + \dots + b_{t-1}\oic{s_{t-1}},
\end{equation*}
for some $b_1,\dots, b_{t-1} \in \R$ and there are real numbers $c$ and $c_x$, $x\in F$ such that
\begin{equation}
g=c+\sum_{x\in F}c_xT_x.
\label{g as togg}
\end{equation}

We now determine $c_x$ for every $x\in F$. By Lemma~$\ref{peak 0 coeffs}$, $c_p=0$ for every peak $p$ and $c_u=0$ for every unshared $u\in F$, so all that remains is to find the coefficients for the valleys. We do this by plugging in different ideals in~\eqref{g as togg}.

Let $s_i$ be a valley and observe that
\begin{equation*}
\eq(s_i)-\eq(\emptyset):-2c_{s_i}=b_i.
\end{equation*}
so that for any valley $s_i$:
\begin{equation} c_{s_i}=-\frac{b_i}{2}.\label{valley coeffs}\end{equation}

We now argue that $b_i=0$ for all $i\in[t-1]$. We consider two cases based on the parity of $i \in[t-1]$. In both cases, we will use the equation
\begin{equation}
\eq(F): c=\sum_{i=1}^{t-1}b_i.
\label{eq full}
\end{equation}

Case 1: Assume $i$ is odd so that $s_i$ is a peak. The equation $\eq(F\setminus\langle s_i\rangle)$ is
\begin{equation}
c=\sum_{\substack{j=1\\j\neq i}}^{t-1}b_j.
\label{eq full-peak}
\end{equation}
Subtracting~\eqref{eq full-peak} from~\eqref{eq full} yields $b_i=0$.

Case 2: Let $i\in[t-1]$ with $i$ even. Then $\eq((F\setminus (S_i\cup S_{i+1}))\cup\{s_i\})$ is 
\[
-c_{s_i}+c=\sum_{\substack{j=1\\j\neq i-1,i+1}}^{t-1}b_j
\]
and subtracting the above equation from~\eqref{eq full} yields
\begin{equation*}
c_{s_i}=b_{i-1}+b_{i+1},
\end{equation*}
where we again define $b_i\coloneqq 0$ for $i\notin[t-1]$. Since $i$ is even, $i-1$ and $i+1$ are odd and thus zero by either Case 1 or definition if $i+1\notin[t-1]$, so $c_{s_i}=0$. By~\eqref{valley coeffs}, this gives that $b_i=0$ for all even $i$ and hence all $i\in[t-1]$. Thus, $\Span_\R(Y)\cap I_T(F)=\{0\}$, as desired.
\end{proof}

\begin{theorem}
Let $\alpha=(\alpha_1,\dots,\alpha_t)$ with corresponding fence $F=\breve{F}(\alpha)$. A basis for $I_T(F)$ is given by
\[
\BBB_I(F)=\bigcup_{i=1}^t\bigcup_{j=1}^{\beta_i}\{\alpha_i\hat{\chi}_{(i,j)}-j\hat{\chi}_p-(\alpha_i-j)\hat{\chi}_v\mid p\text{ peak of }S_i,v\text{ valley of }S_i\},
\]
and thus 
\[
\dim(I_T(F))=\sum_{i=1}^t(\alpha_i-1).
\]
\label{oic togg dim}
\end{theorem}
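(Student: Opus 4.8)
The plan is to mirror the proof of Theorem~\ref{ac togg dim} and pin down $\dim(I_T(F))$ by sandwiching it between matching lower and upper bounds, both equal to $\sum_{i=1}^t(\alpha_i-1)$. The lower bound will come from exhibiting $\BBB_I(F)$ as a linearly independent subset of $I_T(F)$, and the upper bound from Lemma~\ref{glc oic lemma}. First, Theorem~\ref{oic basis equiv const} already establishes that every element of $\BBB_I(F)$ is $\equiv$ const and lies in $\Span_\R(\{\oic{x}\mid x\in F\})$, so $\BBB_I(F)\subseteq I_T(F)$ without further work.

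Next I would verify linear independence. Rather than evaluating at a single generated ideal as in the antichain case (where the evaluation at $\langle x\rangle$ isolated one basis vector), here the cleaner route is to read off coefficients. The key observation is that the element of $\BBB_I(F)$ indexed by $(i,j)$ is the only one whose expansion contains the indicator $\oic{(i,j)}$ of the unshared element $s_{(i,j)}\in\uns{i}$, and it occurs there with the nonzero coefficient $\alpha_i$; every other indicator appearing in a basis element, namely $\oic{p}$ and $\oic{v}$ for the peak and valley of the relevant segment, corresponds to a shared element. Consequently, in any vanishing linear combination of the basis vectors, reading the coefficient of each $\oic{(i,j)}$ forces the corresponding scalar to vanish. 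Since $\#\BBB_I(F)=\sum_{i=1}^t\beta_i=\sum_{i=1}^t(\alpha_i-1)$, this yields $\dim(I_T(F))\geq\sum_{i=1}^t(\alpha_i-1)$.

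For the upper bound, Lemma~\ref{glc oic lemma} supplies a $(t-1)$-dimensional subspace $\Span_\R(Y)$ of $\Span_\R(\{\oic{x}\mid x\in F\})$ with $\Span_\R(Y)\cap I_T(F)=\{0\}$. Because $I_T(F)\subseteq\Span_\R(\{\oic{x}\mid x\in F\})$ and the ambient space has dimension $\#F$, the trivial intersection forces $\dim(I_T(F))+(t-1)\leq\#F$. Using $\#F=\sum_{i=1}^t\alpha_i-1$, this gives $\dim(I_T(F))\leq\#F-(t-1)=\sum_{i=1}^t(\alpha_i-1)$, matching the lower bound and completing the proof. Since every genuinely substantive ingredient, the toggleability identities of Theorem~\ref{oic basis equiv const} and the vanishing-coefficient analysis of Lemma~\ref{glc oic lemma}, is already in place, I do not anticipate a real obstacle; the only point requiring care is the bookkeeping for segments lacking a peak or a valley (the first and last segments in particular), where by the standing convention the absent indicator is identically zero, so the leading-coefficient argument for linear independence goes through unchanged.
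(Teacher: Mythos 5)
Your proposal is correct and follows the same route as the paper: the lower bound comes from Theorem~\ref{oic basis equiv const} together with the linear independence of $\BBB_I(F)$, and the upper bound from Lemma~\ref{glc oic lemma} exactly as you describe. The only (harmless) deviation is in the linear-independence step: the paper evaluates a vanishing combination at pairs of ideals $I_1,I_2$ differing in a single unshared element and subtracts to isolate each scalar, whereas you read off the coefficient of each $\oic{(i,j)}$ directly using the fact that the indicator functions $\{\oic{x}\mid x\in F\}$ are themselves linearly independent and that $\oic{(i,j)}$ occurs only in the basis vector indexed by $(i,j)$, with nonzero coefficient $\alpha_i$ --- a slightly slicker way to reach the same conclusion.
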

\begin{proof}
By Theorem~\ref{oic basis equiv const}, $\BBB_I(F)\subset I_T(F)$. We now show that $\BBB_I(F)$ is linearly independent. 

Let $\xi_{(i,j)}=\alpha_i\hat{\chi}_{(i,j)}-j\oic{p}-(\alpha_i-j)\oic{v}$, where $p$ is the peak of $S_i$ and $v$ is the valley of $S_i$, for $i\in[t]$. Suppose
\begin{equation}
\sum_{i=1}^t\sum_{j=1}^{\beta_i}a_{(i,j)}\xi_{(i,j)}=0,
\label{lc of BI}
\end{equation}
where the $a_{(i,j)}$'s are real constants. Fix $i'\in[t]$ with $\beta_i\geq 1$ and $j'\in[\beta_i]$. Let $I_1$ denote the ideal generated by the element covered by $\unse{i'}{j'}$. If such an element does not exist, let $I_1$ be the empty ideal. Let $I_2=\langle \unse{i'}{j'}\rangle$. Observe that $\xi_{(i,j)}(I_1)=\xi_{(i,j)}(I_2)$ for all $i,j$ such that $(i,j)\neq (i',j')$ and that $\xi_{(i',j')}(I_2)=\xi_{(i',j')}(I_1)+\alpha_i$ since $\unse{i'}{j'}\in I_2$ but $\unse{i'}{j'}\notin I_1$. Evaluating~\eqref{lc of BI} at $I_1$ and then $I_2$ yields two equations:
\begin{equation}
a_{(i',j')}(\xi_{(i',j')})(I_1)+\sum_{i=1}^t\sum_{\substack{j=1\\(i,j)\neq(i',j')}}^{\beta_i}a_{(i,j)}(\xi_{(i,j)})(I_1)=0,
\label{i1 eq}
\end{equation}
and
\[
a_{(i',j')}(\xi_{(i',j')})(I_2)+\sum_{i=1}^t\sum_{\substack{j=1\\(i,j)\neq(i',j')}}^{\beta_i}a_{(i,j)}(\xi_{(i,j)})(I_2)=0.
\]
The second equation is equivalent to
\begin{equation}
a_{(i',j')}((\xi_{(i',j')})(I_1)+\alpha_i)+\sum_{i=1}^t\sum_{\substack{j=1\\(i,j)\neq(i',j')}}^{\beta_i}a_{(i,j)}(\xi_{(i,j)})(I_1)=0.
\label{i2 eq}
\end{equation}
Subtracting~\eqref{i1 eq} from~\eqref{i2 eq} gives 
\[
\alpha_ia_{(i',j')}=0,
\]
so $a_{(i',j')}=0$. Since $(i',j')$ was arbitrary, we must have $a_{(i,j)}=0$ for each $a_{(i,j)}$ in~\eqref{lc of BI}. Therefore $\BBB_I(F)$ is linearly independent and
\[
\dim(I_T(F))\geq \#\BBB_I(F)=\sum_{i=1}^t(\alpha_i-1).
\]
Furthermore, by Lemma~\ref{glc oic lemma}, $\Span(\{\oic{x}\mid x\in F\})$ contains a $(t-1)$-dimensional subspace which intersects trivially with $I_T(F)$. Thus,  
\[
\dim(I_T(F))\leq \#F-(t-1)=\sum_{i=1}^t\alpha_i-1-(t-1)=\sum_{i=1}^t(\alpha_i-1),
\]
which proves equality.
\end{proof}


\section{The Statistic \texorpdfstring{$\hat{\chi}$}{χ}}
\label{the stat oic section}
This section is motivated by a conjecture concerning the order ideal cardinality statistic $\oic{}$ on certain self-dual fences. Let $a^t$ denote the tuple consisting of $t$ elements, all with value $a$.
\begin{conj}[\cite{eliz:rof}]
Let $\alpha=(a^t)$ with $t$ odd and let $F=\breve{F}(\alpha)$. The statistic $\oic{}$ is $n/2$-mesic where $n=\# F$.
\label{oic conj self-dual}
\end{conj}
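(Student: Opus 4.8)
The plan is to combine the order-ideal toggleability basis of Theorem~\ref{oic basis equiv const} with the self-duality of $F=\breve{F}(a^t)$. Because $t$ is odd, $F$ is self-dual: reflecting the fence left-to-right is an order-reversing involution $\omega:F\to F$ with $\omega(s_k)=s_{t-k}$ that interchanges peaks and valleys, and it fixes no shared element (as $t-k\neq k$). Define the complementation--duality map $\Phi:\JJJ(F)\to\JJJ(F)$ by $\Phi(I)=\omega(F\setminus I)$. One checks that $\Phi$ is an involution, that $\oic{}(\Phi(I))=n-\oic{}(I)$ for every ideal $I$, and---via the toggle identity $\Phi\circ\tau_p=\tau_{\omega(p)}\circ\Phi$ together with the fact that $\omega$ reverses linear extensions---that $\Phi\circ\rho=\rho^{-1}\circ\Phi$. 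Hence $\Phi$ permutes the rowmotion orbits, pairing each orbit $O$ with $\Phi(O)$.

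First I would reduce $\oic{}$ to the shared elements using Theorem~\ref{oic basis equiv const}. Each internal segment $S_i$ has both a peak $p_i$ and a valley $v_i$, giving $\oic{(i,j)}\equiv\frac{j}{a}\oic{p_i}+\frac{a-j}{a}\oic{v_i}$; the two end segments are both up-segments (since $t$ is odd), with $S_1$ having a peak $s_1$ but no valley and $S_t$ having a valley $s_{t-1}$ but no peak, so the $\1(S_i\text{ has no valley})$ term yields $\oic{(1,j)}\equiv\frac{j}{a}\oic{s_1}+\frac{a-j}{a}$ and $\oic{(t,j)}\equiv\frac{a-j}{a}\oic{s_{t-1}}$. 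Summing over all $j\in[\beta_i]$ and all $i$ (with $\beta_i=a-1$) and adding the shared indicators $\sum_{k=1}^{t-1}\oic{s_k}$, the binomial collapse $\sum_{j=1}^{a-1}j=\sum_{j=1}^{a-1}(a-j)=\binom{a}{2}$ together with a short bookkeeping of the boundary contributions from $S_1$ and $S_t$ shows every shared indicator acquires coefficient exactly $a$, yielding
\[
\oic{}\equiv a\sum_{k=1}^{t-1}\oic{s_k}+\frac{a-1}{2}.
\]
Since every $T_p$ is $0$-mesic, $\equiv$ preserves orbit-averages, so $\oic{}$ is $\frac{n}{2}=\frac{ta-1}{2}$-mesic if and only if $g:=\sum_{k=1}^{t-1}\oic{s_k}$ is $\frac{t-1}{2}$-mesic.

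Next I would feed $g$ into the self-duality. From $\oic{s_k}(\Phi(I))=1-\oic{\omega(s_k)}(I)=1-\oic{s_{t-k}}(I)$ and the fact that $k\mapsto t-k$ permutes $[t-1]$, we obtain $g(\Phi(I))=(t-1)-g(I)$. Consequently, if every rowmotion orbit $O$ is $\Phi$-invariant, then reindexing by $\Phi$ gives $\sum_{I\in O}g(I)=\sum_{I\in O}g(\Phi(I))=(t-1)|O|-\sum_{I\in O}g(I)$, which forces the average of $g$ on $O$ to equal $\frac{t-1}{2}$ and completes the proof.

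The main obstacle is precisely this $\Phi$-invariance of every orbit. Self-duality alone only gives the weaker pairing $A(O)+A(\Phi(O))=t-1$ of $g$-averages between two possibly distinct orbits, which does not force either to be $\frac{t-1}{2}$. One cannot finish with toggleability either: by Lemma~\ref{glc oic lemma} no nonzero combination of the $\oic{s_k}$ lies in $I_T(F)$, so the differences $\oic{s_k}-\oic{s_{t-k}}$ are genuinely outside the toggleability span, meaning the reduction has extracted all of the ``linear'' information and what remains is intrinsically dynamical. Reformulated on the antichain side, the same obstruction is the claim that the opposite-element differences $\ac{s_k}-\ac{s_{t-k}}$ are $0$-mesic; using the basis of $A_T(F)$ one verifies, writing $D_y=\ac{y}-\ac{\omega(y)}$, the relation $a\,D_x+D_{s_{i-1}}+D_{s_i}\equiv 0$ for $x\in\uns{i}$, which reduces all opposite differences to the shared ones but, again, not to constants. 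To break the impasse I would turn to the orbit-to-tiling correspondence of Elizalde et al.~\cite{eliz:rof} and try to show that the reflection $\omega$ induces a symmetry of the tiling model that fixes each orbit setwise; it is this combinatorial symmetry, rather than any linear-algebraic identity, that the argument ultimately requires.
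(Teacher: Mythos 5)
The statement you were asked to prove is Conjecture~\ref{oic conj self-dual}, which the paper itself does \emph{not} prove: it is an open conjecture from~\cite{eliz:rof}, and the paper's contribution is a reduction, not a resolution. Your proposal is honest about not closing the argument, and the reduction you do carry out is correct and is essentially the paper's own. Your identity $\oic{}\equiv a\sum_{k=1}^{t-1}\oic{s_k}+\frac{a-1}{2}$ is equivalent, via the dictionary $\oic{p}=\ac{p}$ for peaks and $\oic{v}\equiv 1-\ac{v}$ for valleys, to the paper's Theorem~\ref{oic as peaks and valleys}, which states $\oic{}\equiv\frac{n}{2}+a\bigl(\sum_{p}\ac{p}-\sum_{v}\ac{v}\bigr)$; both reduce Conjecture~\ref{oic conj self-dual} to the claim that differences of indicator functions at opposite shared elements are $0$-mesic (the paper's Conjectures~\ref{peaks minus valleys self-dual} and~\ref{refined stats self-dual}). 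Your observation that self-duality only pairs orbits, and hence only controls the combined average of an orbit with its image under the complementation--duality map, is exactly the content of the paper's Lemma~\ref{prime rho rel} and Corollary~\ref{ac - kappa ac} ($0$-mesic on ``dihedral group orbits,'' which may be unions of two rowmotion orbits).

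So there is a genuine gap, but it is the same gap the paper has. The obstacle you name---that by Lemma~\ref{glc oic lemma} no nonzero combination of the $\oic{s_k}$ lies in $I_T(F)$, so the residual statement cannot be settled by single-element toggleability statistics---is precisely why the paper leaves this as a conjecture; the paper only resolves the special case $\breve{F}(a,a,a)$ by exhibiting an explicit expansion of $\ac{a}-\ac{2a}$ in \emph{antichain} toggleability statistics (Theorem~\ref{peak minus valley for a,a,a}), and remarks that no general pattern for such expansions was found. Your proposed escape route via the tiling correspondence of~\cite{eliz:rof} is reasonable but is not carried out here or in the paper. Everything you prove en route is correct; the conjecture itself remains open.
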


It is important to note that $\oic{}\not\equiv$ const for this type of fence in general. Thus, the toggleability statistics cannot be used directly to prove this conjectured homomesy. However, we will show how using the toggleability statistics leads to exploring related homomesy statements, which might be helpful in showing Conjecture~\ref{oic conj self-dual}.

The following lemma gives a straightforward way to translate between antichain and order ideal indicator functions for general fences, which we will use for some results in this section and later in the proof of Theorem~\ref{homom equal dim}.

\begin{lemma}
Let $F$ be a fence. Then
\begin{align}
\ac{x}&=\begin{cases}
\oic{x}&\text{if }x\text{ is a peak,}\\
\oic{x}-\oic{y}&\text{if }x \text{ is unshared and $y$ covers $x$,}\\
1-T_x-\oic{x}&\text{if }x \text{ is a valley,}
\end{cases}
\label{ac to oic dict}\\
\intertext{and}
\oic{x}&=\begin{cases}
\ac{x}&\text{if }x\text{ is a peak,}\\
\sum_{y\rtrieq x}\ac{y}&\text{if }x \text{ is unshared,}\\
1-T_x-\ac{x}&\text{if }x \text{ is a valley.}
\end{cases}
\label{oic to ac dict}
\end{align}
\end{lemma}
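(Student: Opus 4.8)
The plan is to verify~\eqref{ac to oic dict} directly by a case analysis on the type of $x$ together with the membership of the relevant elements in an arbitrary ideal $I \in \JJJ(F)$, and then to deduce~\eqref{oic to ac dict} from~\eqref{ac to oic dict} by algebraic rearrangement supplemented by a single telescoping sum. The starting observation is structural: in a fence the Hasse diagram is a path, so every element has at most two neighbors. A peak covers its two neighbors and hence is maximal in $F$; therefore $x\in I \iff x\in\max(I)$ for every ideal, which gives $\chi_x=\hat\chi_x$ and settles the peak lines of \emph{both} displays at once. A valley is covered by its two neighbors and hence is minimal in $F$, and a non-maximal unshared element has a unique covering element $y$; throughout I would invoke the stated convention that indicator functions indexed by a nonexistent element (e.g.\ the cover $y$ of a maximal unshared $x$) are identically zero, which makes the unshared line reduce to $\chi_x=\hat\chi_x$ exactly when $x$ is maximal.

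For the unshared case of~\eqref{ac to oic dict} I would split into $x\notin I$, the pair $x\in I$ with $y\in I$, and $x\in I$ with $y\notin I$, where $y$ is the unique cover of $x$. Since $I$ is a downset, $x\notin I$ forces $y\notin I$, so both sides vanish; when $x,y\in I$ the element $x$ is not maximal and $\hat\chi_x-\hat\chi_y=0$; when $x\in I$ and $y\notin I$ the element $x$ is maximal in $I$ and $\hat\chi_x-\hat\chi_y=1$. For the valley case I would use that minimality of $x$ makes $T_x(I)=1$ precisely when $x\notin I$ and $T_x(I)=-1$ precisely when $x\in\max(I)$; the three-way split $x\notin I$, $x\in\max(I)$, and $x\in I\setminus\max(I)$ then shows in each case that $\chi_x$ and $1-T_x-\hat\chi_x$ agree.

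To obtain~\eqref{oic to ac dict}, the peak identity $\hat\chi_x=\chi_x$ is immediate, and the valley identity $\hat\chi_x=1-T_x-\chi_x$ is just the valley case of~\eqref{ac to oic dict} solved for $\hat\chi_x$, since $\chi_x$ and $\hat\chi_x$ enter the expression $1-T_x-(\cdot)$ symmetrically. For the unshared case I would observe that the up-set $\{y:y\rtrieq x\}$ is a saturated chain $x=z_0\lessdot z_1\lessdot\cdots\lessdot z_m$ whose interior vertices $z_1,\dots,z_{m-1}$ are unshared (each with unique cover the next vertex) and whose top $z_m$ is either a peak or a maximal unshared element. Applying the unshared identity of~\eqref{ac to oic dict} to $z_0,\dots,z_{m-1}$ and the peak/maximal identity to $z_m$, the sum $\sum_{j=0}^{m}\chi_{z_j}=\sum_{j=0}^{m-1}\big(\hat\chi_{z_j}-\hat\chi_{z_{j+1}}\big)+\hat\chi_{z_m}$ telescopes to $\hat\chi_{z_0}=\hat\chi_x$, which is exactly $\sum_{y\rtrieq x}\chi_y=\hat\chi_x$.

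All the individual verifications are routine bookkeeping. The only step needing genuine care is the structural claim underlying the telescoping, namely that the up-set of an unshared element is a saturated chain whose interior vertices are unshared and whose maximal vertex is a peak or a maximal unshared element; once this is in hand the rest follows mechanically, and the valley cases only require keeping track of the sign of $T_x$ correctly.
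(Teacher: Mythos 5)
Your proposal is correct and is essentially the paper's argument run in the opposite direction: the paper verifies~\eqref{oic to ac dict} directly (the peak and unshared cases from the observation that $x\in I$ iff some $y\rtrieq x$ is in $\max(I)$, the valley case by the same three-way split on $x\notin I$, $x\in\max(I)$, $x\in I\setminus\max(I)$ that you use) and then obtains~\eqref{ac to oic dict} by M\"obius inversion, whereas you verify~\eqref{ac to oic dict} directly and recover~\eqref{oic to ac dict} by telescoping along the saturated chain above $x$, which is the same inversion. All of your structural claims about fences (peaks maximal, valleys minimal, up-sets of unshared elements being saturated chains ending in a peak or a maximal unshared element) are correct, so no changes are needed.
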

\begin{proof}
We first show~\eqref{oic to ac dict}. The first two cases of~\eqref{oic to ac dict} follow directly from the fact that $x\in I$ if and only if $y\in\max(I)$ for some $y\rtrieq x$. If $x$ is a valley, we consider three subcases based on whether $x\in I$ and $x\in\max(I)$.

Case 1: Assume $x\notin I$, so that $\oic{x}=\ac{x}=0$. Then $x$ can be toggled in, so $T_x=1$.

Case 2: Assume $x\in\max(I)$, so that $\oic{x}=\ac{x}=1$. Then $x$ can be toggled out, so $T_x=-1$.

Case 3: Assume $x\in I$ but $x\notin\max(I)$. Then $\oic{x}=1$, $\ac{x}=0$, and $T_x=0$.

In each of these cases, one can confirm that when $x$ is a valley, $\oic{x}=1-T_x-\ac{x}$.

Finally,~\eqref{ac to oic dict} follows from~\eqref{oic to ac dict} by M\"obius inversion.
\end{proof}

The above lemma allows us to express $\oic{}$ for $F=\breve{F}(a^t)$, $t$ odd, in an alternative way that involves antichain indicator functions for shared elements.

\begin{theorem}
Let $F=\breve{F}(a^t)$ with $t$ odd. Then
\[
\oic{}\equiv \frac{n}{2}+a\left(\sum_{p:\text{ peak}}\ac{p}-\sum_{v:\text{ valley}}\ac{v}\right).
\]
\label{oic as peaks and valleys}
\end{theorem}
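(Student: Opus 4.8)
The plan is to reduce $\oic{}=\sum_{x\in F}\oic{x}$ to a combination involving only the shared elements by applying Theorem~\ref{oic basis equiv const} segment by segment, and then to convert the shared order-ideal indicators into antichain indicators via the dictionary~\eqref{oic to ac dict}. First I would record the structure of $F=\breve{F}(a^t)$ with $t$ odd: every segment has $\alpha_i=a$, so $\beta_i=a-1$; the boundary segment $S_1$ has a peak ($s_1$) and no valley; $S_t$ has a valley ($s_{t-1}$) and no peak; and every interior segment $S_i$ ($2\le i\le t-1$) has both a peak and a valley. Also $n=\#F=at-1$, and the shared elements $s_1,\dots,s_{t-1}$ consist of $(t-1)/2$ peaks (odd index) and $(t-1)/2$ valleys (even index).

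Next I would compute the total unshared contribution. For an interior segment, Theorem~\ref{oic basis equiv const} gives $\oic{(i,j)}\equiv \tfrac{j}{a}\oic{p_i}+\tfrac{a-j}{a}\oic{v_i}$; summing over $j\in[a-1]$ and using $\sum_{j=1}^{a-1}j=\sum_{j=1}^{a-1}(a-j)=\binom{a}{2}$ yields $\sum_j\oic{(i,j)}\equiv\tfrac{a-1}{2}\bigl(\oic{p_i}+\oic{v_i}\bigr)$. The boundary segments contribute $\sum_j\oic{(1,j)}\equiv\tfrac{a-1}{2}\oic{s_1}+\tfrac{a-1}{2}$ and $\sum_j\oic{(t,j)}\equiv\tfrac{a-1}{2}\oic{s_{t-1}}$. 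The key bookkeeping step is that each shared element $s_k$ lies in exactly two consecutive segments, and in each of those segments it plays the role of peak (if $k$ odd) or valley (if $k$ even), so each $\oic{s_k}$ is picked up twice with coefficient $\tfrac{a-1}{2}$. Summing across all segments therefore gives
\[
\sum_{\text{unshared }x}\oic{x}\;\equiv\;(a-1)\sum_{\text{shared }x}\oic{x}\;+\;\frac{a-1}{2},
\]
where the lone extra constant $\tfrac{a-1}{2}$ comes solely from $S_1$ having no valley. Adding the shared terms $\sum_{\text{shared}}\oic{x}$ then collapses this to $\oic{}\equiv a\sum_{\text{shared}}\oic{x}+\tfrac{a-1}{2}$.

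Finally I would apply~\eqref{oic to ac dict} to the shared elements: a peak satisfies $\oic{p}=\ac{p}$ and a valley satisfies $\oic{v}=1-T_v-\ac{v}$. Since there are $(t-1)/2$ valleys and the $T_v$ terms are absorbed into $\equiv$, this gives $\sum_{\text{shared}}\oic{x}\equiv\sum_{p}\ac{p}-\sum_{v}\ac{v}+\tfrac{t-1}{2}$. Substituting yields $\oic{}\equiv a\bigl(\sum_p\ac{p}-\sum_v\ac{v}\bigr)+\tfrac{a(t-1)}{2}+\tfrac{a-1}{2}$, and the constant simplifies to $\tfrac{a(t-1)+a-1}{2}=\tfrac{at-1}{2}=\tfrac{n}{2}$, which is the claimed identity. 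I expect the main obstacle to be the careful double-counting argument in the middle paragraph: one must verify that the two boundary segments $S_1$ and $S_t$ are exactly responsible for the asymmetry (the extra $\tfrac{a-1}{2}$ and the absence of a corresponding peak/valley mismatch), so that after pairing, every shared element emerges with the uniform coefficient $a-1$ and no shared indicator is over- or under-counted.
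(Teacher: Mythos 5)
Your proof is correct, but it takes the route dual to the paper's. The paper first expands $\oic{}=\sum_{x}\oic{x}$ and immediately converts \emph{every} term to antichain indicators via~\eqref{oic to ac dict} (which for unshared $x$ requires the case $\oic{x}=\sum_{y\rtrieq x}\ac{y}$ and hence a double-counting argument producing the coefficient $2a-1$ on each $\ac{p}$), and then eliminates the unshared antichain indicators using the antichain relation of Theorem~\ref{peak and valley anti}, $\ac{(i,j)}\equiv\frac{1}{a}(1-\ac{p_i}-\ac{v_i})$. You instead stay entirely in the order-ideal world, eliminating the unshared $\oic{(i,j)}$ via Theorem~\ref{oic basis equiv const} so that only the $t-1$ shared indicators survive (with the single extra constant $\frac{a-1}{2}$ coming from $S_1$ having no valley), and only then invoke~\eqref{oic to ac dict} in its easy peak/valley cases. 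Your bookkeeping is right: each $s_k$ is the peak (resp.\ valley) of exactly two consecutive segments, so it is picked up with coefficient $\frac{a-1}{2}$ twice, and the boundary asymmetry is exactly the lone constant from $S_1$; the constants then combine to $\frac{a(t-1)}{2}+\frac{a-1}{2}+\frac{a(t-1)}{2}\cdot 0=\frac{at-1}{2}=\frac{n}{2}$ as you compute. What your route buys is that the dictionary is applied only to shared elements, avoiding the $(2a-1)$ coefficient computation; what the paper's route buys is that it leans on Theorem~\ref{peak and valley anti}, keeping Section~\ref{the stat oic section} self-contained within the antichain framework it goes on to use (Conjectures~\ref{peaks minus valleys self-dual} and~\ref{refined stats self-dual} are phrased in terms of $\ac{p}-\ac{v}$). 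Both arguments are of comparable length and both are valid.
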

\begin{proof}
We first write the order ideal cardinality as the sum of the individual order ideal indicator functions and then apply~\eqref{oic to ac dict} to express everything in terms of antichain indicator functions and toggleability statistics:
\begin{align*}
\oic{}&=\sum_{p:\text{ peak}}\oic{p}+\sum_{v:\text{ valley}}\oic{v}+\sum_{i=1}^t\sum_{j=1}^{a-1}\oic{(i,j)}\\
&=\sum_{p:\text{ peak}}(2a-1)\ac{p}+\sum_{v:\text{ valley}}(1-T_v-\ac{v})+\sum_{i=1}^t\sum_{j=1}^{a-1}j\ac{(i,j)}.
\end{align*}
By Theorem~\ref{peak and valley anti}, $$\ac{(i,j)}\equiv \frac{1}{a}\left(1-\ac{p_i}-\ac{v_i}\right),$$ where $p_i$ (resp. $v_i$) is the peak (resp. valley) of $S_i$. Substituting this in the last sum yields
\begin{align*}
\oic{}&\equiv\sum_{p:\text{ peak}}(2a-1)\ac{p}+\sum_{v:\text{ valley}}(1-\ac{v})+\frac{1}{a}\sum_{i=1}^t\sum_{j=1}^{a-1}j(1-\ac{p_i}-\ac{v_i})\\
&=\sum_{p:\text{ peak}}(2a-1)\ac{p}+\sum_{v:\text{ valley}}(1-\ac{v})+\frac{1}{a}\sum_{i=1}^t\left(\frac{(a-1)a}{2}\right)(1-\ac{p_i}-\ac{v_i})\\
&=\sum_{p:\text{ peak}}(2a-1)\ac{p}+\sum_{v:\text{ valley}}(1-\ac{v})+\frac{(a-1)}{2}\sum_{i=1}^t(1-\ac{p_i}-\ac{v_i}).
\end{align*}
Since each $p_i$ and $v_i$ appears twice in the last sum,  simplification yields
\[
\oic{}\equiv a\sum_{p:\text{ peak}}\ac{p}+\sum_{v:\text{ valley}}(1-a\ac{v})+\frac{t(a-1)}{2}.
\]
Since the number of valleys is $\frac{t-1}{2}$ and $n=\#F=ta-1$, we obtain
\[
\oic{}\equiv a\sum_{p:\text{ peak}}\ac{p}-a\sum_{v:\text{ valley}}\ac{v}+\frac{t-1}{2}+\frac{t(a-1)}{2}=\frac{n}{2}+a\left(\sum_{p:\text{ peak}}\ac{p}-\sum_{v:\text{ valley}}\ac{v}\right).
\]
\end{proof}

Using Theorem~\ref{oic as peaks and valleys}, one can see that Conjecture~\ref{oic conj self-dual} is equivalent to the following.
\begin{conj}
Let $F=\breve{F}(a^t)$ with $t$ odd. The statistic
\[
\sum_{p:\text{ peak}}\ac{p}-\sum_{v:\text{ valley}}\ac{v}
\]
is 0-mesic.
\label{peaks minus valleys self-dual}
\end{conj}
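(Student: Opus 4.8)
The plan is to exploit the self-duality of $F=\breve F(a^t)$ directly, using the toggle dictionary already established, and to reduce the conjecture to a statement about how the duality involution acts on rowmotion orbits. Since $t$ is odd, $F$ admits an order-reversing involution $w\colon F\to F$ (for instance the reflection $x_i\mapsto x_{n+1-i}$), and $w$ interchanges peaks and valleys: it carries the set of peaks bijectively onto the set of valleys. I would package this as the involution $\Psi\colon\JJJ(F)\to\JJJ(F)$, $\Psi(I)=w(F\setminus I)$. A standard toggle–complementation computation (using $c\circ\tau_p=\tau_p^{*}\circ c$ for complementation $c$ together with the induced isomorphism $\JJJ(F^{*})\cong\JJJ(F)$) shows $\Psi\circ\rho\circ\Psi=\rho^{-1}$, so $\Psi$ permutes the set of rowmotion orbits, carrying an orbit $O$ to an orbit $\Psi(O)$.

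Next I would compute the effect of $\Psi$ on the target statistic $g=\sum_{p\text{ peak}}\ac p-\sum_{v\text{ valley}}\ac v$. Because $w$ is order-reversing, $\max(\Psi(I))=w(\min(F\setminus I))$, so for any $x$ one gets $\ac x(\Psi(I))=\1(w(x)\in\min(F\setminus I))=T^{+}_{w(x)}(I)=T_{w(x)}(I)+\ac{w(x)}(I)$. Hence $\ac x\circ\Psi\equiv\ac{w(x)}$ in the toggle-equivalence of this paper. Summing and using that $w$ swaps peaks and valleys, the peak and valley sums exchange and pick up a sign, giving $g\circ\Psi\equiv -g$. Since every toggleability statistic is $0$-mesic, this yields, for each orbit $O$, the exact relation $\operatorname{avg}_{\Psi(O)}(g)=\operatorname{avg}_O(g\circ\Psi)=-\operatorname{avg}_O(g)$.

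At this point the conjecture would follow immediately from the special case $\Psi(O)=O$: then $\operatorname{avg}_O(g)=-\operatorname{avg}_O(g)$ forces $\operatorname{avg}_O(g)=0$. So the whole problem reduces to showing that the duality involution $\Psi$ fixes every rowmotion orbit, equivalently that on each orbit $\Psi$ coincides with some power $\rho^{k}$ rather than genuinely interchanging two distinct orbits. This is exactly the hard part: the relation $\operatorname{avg}_{\Psi(O)}(g)=-\operatorname{avg}_O(g)$ by itself only shows that the orbit averages occur in $\pm c$ pairs, which is not enough for $0$-mesy; one must rule out nonzero $c$, and for a genuinely homomesic statistic this can only happen if the paired orbits coincide (or, failing that, if the common value $c$ vanishes for independent reasons).

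To close this gap I would turn to the correspondence between rowmotion orbits and tilings introduced by Elizalde et al., translating $\Psi$ into an explicit operation on tilings and attempting to show that this operation preserves each orbit (the $t=3$ case, where the statistic is the single difference $\ac{s_1}-\ac{s_2}$, is already known and would serve as the base case). An alternative would be an induction on the number of segments, peeling off a symmetric pair of outer segments, although it is unclear how cleanly rowmotion restricts under such a reduction. The main obstacle, in either route, is the orbit combinatorics: the self-duality argument reduces the conjecture cleanly to orbit self-duality under $\Psi$, but establishing that self-duality for all odd $t$ is precisely the step that the toggleability machinery cannot reach.
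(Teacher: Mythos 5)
First, a point of framing: the statement you are trying to prove is stated in the paper as a \emph{conjecture} (Conjecture~\ref{peaks minus valleys self-dual}), and the paper offers no proof of it; the authors explicitly say they were unable to prove it or find a counterexample. So your proposal cannot be measured against a proof in the paper --- and indeed your proposal is not a proof either, as you yourself acknowledge. The reduction you carry out is correct as far as it goes: the involution $\Psi(I)=w(F\setminus I)$ does conjugate $\rho$ to $\rho^{-1}$, the computation $\ac{x}\circ\Psi = T_{w(x)}+\ac{w(x)}\equiv \ac{w(x)}$ is right, and since $w$ exchanges peaks and valleys you correctly obtain $\operatorname{avg}_{\Psi(\OOO)}(g)=-\operatorname{avg}_{\OOO}(g)$ for every rowmotion orbit $\OOO$. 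This is essentially the same partial progress the paper itself records in Section~\ref{the stat oic section}: the paper works with the closely related involution $I'=\Delta^{-1}\circ\kappa\circ\Delta(I)$ (which gives the cleaner exact identity $\ac{\ell}(I')=\ac{\kappa(\ell)}(I)$, with no toggleability correction needed), proves $\rho^{-1}(I')=(\rho(I))'$ in Lemma~\ref{prime rho rel}, and deduces in Corollary~\ref{ac - kappa ac} exactly your pairing of orbits with opposite sums, culminating only in the weaker statement that the differences are $0$-mesic on \emph{dihedral group orbits}.

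The genuine gap is the one you name: nothing in this argument rules out a pair of distinct orbits $\OOO\neq\Psi(\OOO)$ with common nonzero averages $\pm c$. Establishing that every rowmotion orbit of $\breve{F}(a^t)$ is fixed by the duality involution (or otherwise forcing $c=0$) is precisely the open combinatorial content of the conjecture, and neither the toggleability machinery nor the self-duality symmetry reaches it. Your suggested next steps (the tiling model of Elizalde et al., or induction on segments) are reasonable directions, but as written the proposal establishes only what the paper already proves, namely $0$-mesy on dihedral orbits, not the conjecture itself.
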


In fact, we make a stronger conjecture which we have confirmed for $\breve{F}(a^t)$ with $a+t\leq 10$, $\breve{F}(2^9)$, and $\breve{F}(8^3)$:
\begin{conj}
Let $F=\breve{F}(a^t)$ with $t$ odd, and let $i\in[t-1]$. Then
\[
\ac{s_i}-\ac{s_{t-i}}
\]
is 0-mesic.
\label{refined stats self-dual}
\end{conj}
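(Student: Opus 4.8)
The plan is to exploit the self-duality of $F=\breve{F}(a^t)$ for $t$ odd. Its Hasse diagram is a path, and reversing it gives an order-reversing involution $\phi\colon F\to F$, $\phi(x_k)=x_{n+1-k}$, which interchanges peaks with valleys and satisfies $\phi(s_i)=s_{t-i}$. It induces an involution $\Phi\colon\JJJ(F)\to\JJJ(F)$ by $\Phi(I)=\phi(F\setminus I)=F\setminus\phi(I)$. Using the standard toggle identity $\Phi\tau_p\Phi=\tau_{\phi(p)}$ together with the fact that $\phi$ carries a linear extension of $F$ to a reversed one, I would first record that $\Phi\rho\Phi=\rho^{-1}$; hence $\Phi$ permutes the rowmotion orbits, sending an orbit $\OOO$ to the orbit $\Phi(\OOO)$ traversed in the opposite direction, with $\#\Phi(\OOO)=\#\OOO$.

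Next I would reduce the antichain statement to an order-ideal one. Assuming $i$ odd (so $s_i$ is a peak and $s_{t-i}$ a valley; the even case is symmetric), the dictionary~\eqref{ac to oic dict} gives $\ac{s_i}=\oic{s_i}$ and $\ac{s_{t-i}}=1-T_{s_{t-i}}-\oic{s_{t-i}}$, so
\[
\ac{s_i}-\ac{s_{t-i}}=\big(\oic{s_i}+\oic{s_{t-i}}\big)-1+T_{s_{t-i}}.
\]
Since $T_{s_{t-i}}$ is $0$-mesic, the conjecture is equivalent to showing that $h\coloneqq\oic{s_i}+\oic{s_{t-i}}$ is $1$-mesic. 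From $x\in\Phi(I)\iff\phi(x)\notin I$ one obtains $\oic{s_i}\circ\Phi=1-\oic{s_{t-i}}$ and $\oic{s_{t-i}}\circ\Phi=1-\oic{s_i}$, whence $h\circ\Phi=2-h$. Summing over an orbit and using $\#\Phi(\OOO)=\#\OOO$ yields $\mathrm{avg}_{\Phi(\OOO)}(h)=2-\mathrm{avg}_{\OOO}(h)$. In particular, on every $\Phi$-invariant orbit this forces $\mathrm{avg}_{\OOO}(h)=1$, proving the conjecture for all self-paired orbits.

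The hard part is the orbits with $\OOO\neq\Phi(\OOO)$: the duality above only shows that the two averages sum to $2$ (equivalently, that the orbit sums of $\ac{s_i}-\ac{s_{t-i}}$ over $\OOO$ and over $\Phi(\OOO)$ are negatives of one another), not that each equals $1$ (resp.\ $0$). This gap is genuine rather than an artifact of the method: by Lemma~\ref{glc ac lemma} no nonzero combination of shared-element antichain indicators lies in $A_T(F)$, so $\ac{s_i}-\ac{s_{t-i}}\not\equiv\mathrm{const}$, and the toggleability machinery alone cannot settle it. To close the gap I would turn to the explicit orbit description for fences from~\cite{eliz:rof}, under which each orbit is encoded by a tiling and the self-duality $\phi$ acts as a reflection of that encoding. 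The goal would then be a \emph{local} argument near the pair $\{s_i,s_{t-i}\}$: track, as one traverses a single orbit, the steps at which $s_i$ is toggled out of the ideal (the instants contributing to $\ac{s_i}$) against those at which $s_{t-i}$ is, and use the reflection symmetry of the tiling to build a sign-reversing matching between them \emph{within the same orbit}. Producing such an intra-orbit matching — as opposed to the inter-orbit one that $\Phi$ already provides — is the main obstacle, and the per-$i$ form of Conjecture~\ref{refined stats self-dual} is encouraging evidence that a local, tiling-based bookkeeping around the two shared elements should suffice.
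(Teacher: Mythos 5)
This statement is a \emph{conjecture} in the paper, not a theorem: the authors explicitly say they have only verified it computationally for $\breve{F}(a^t)$ with $a+t\leq 10$, $\breve{F}(2^9)$, and $\breve{F}(8^3)$, and that they ``haven't \dots been able to prove'' it. So there is no proof in the paper to match yours against, and your proposal, as you yourself acknowledge, does not close the problem either. What you do establish is correct and is essentially the same partial result the paper records: your involution $\Phi=\kappa\circ c$ is the paper's ideal complement $\overline{I}$, your identity $\Phi\rho\Phi=\rho^{-1}$ is their Lemma~\ref{prime rho rel} (stated there for the companion map $I'=\Delta^{-1}\circ\kappa\circ\Delta$, but the same inter/intra-orbit pairing results), and your conclusion --- average $1$ on $\Phi$-invariant orbits, averages summing to $2$ on paired orbits --- is exactly their Corollary~\ref{ac - kappa ac} and the final theorem of Section~\ref{the stat oic section} on dihedral group orbits. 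A small stylistic difference: you reduce to $\oic{s_i}+\oic{s_{t-i}}$ via the dictionary~\eqref{ac to oic dict} and $0$-mesicity of $T_{s_{t-i}}$ so that the complement map suffices, whereas the paper introduces the second involution $I'$ precisely so it can work with $\max(I)$ directly; both are valid, and your reduction to the $1$-mesicity of the order-ideal sum is the content of their Corollary~\ref{rof iff}.

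The genuine gap is the one you name: nothing in the duality argument forces a non-self-paired orbit $\OOO\neq\Phi(\OOO)$ to individually have the right average, and by Lemma~\ref{glc ac lemma} the statistic $\ac{s_i}-\ac{s_{t-i}}$ is provably \emph{not} $\equiv$ const, so the single-element toggleability machinery cannot help. Your correct diagnosis of this obstruction is the most valuable part of the write-up. Two cautions about your proposed ways forward. First, the tiling correspondence of~\cite{eliz:rof} yields these homomesies only under additional palindromicity constraints on the tilings (this is exactly what the paper points out before Corollary~\ref{rof iff}), so a ``local bookkeeping near $\{s_i,s_{t-i}\}$'' would need an idea beyond reflecting the tiling, which only reproduces the inter-orbit matching you already have. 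Second, the paper's Theorem~\ref{peak minus valley for a,a,a} shows that for $\breve{F}(a,a,a)$ the case $i=1$ of the conjecture \emph{can} be settled by expressing the statistic in the larger span of antichain toggleability statistics $T_A$ (Theorem~\ref{antichain togg is 0-mesic}); that is the one route known to work unconditionally on a nontrivial instance, and it is orthogonal to the duality argument. If you want to push past the self-paired orbits, that is where I would direct your effort.
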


To discuss a potential method for proving the above conjectures, we mention a generalization of toggleability statistics which have been called antichain toggleability statistics~\cite{defa:hvts}. 

Let $P$ be a poset, and let $\mathcal{A}(P)$ be the set of all antichains of $P$. For each $A\in \mathcal{A}(P)$, define the \emph{antichain toggleability statistic} $T_A:\JJJ(P)\to\R$ as
\[
T_A(I)\coloneqq 
\begin{cases}
1&\text{if }A\subseteq\min(P\setminus I),\\
-1&\text{if }A\subseteq\max(I),\\
0&\text{otherwise}.
\end{cases}
\]
Antichain toggleability statistics are far more numerous than the toggleability statistics we have been considering, but an advantage of the former can be seen in the following theorem. 
\begin{theorem}[\cite{defa:hvts}, Theorem 6.2]
For a poset $P$, the space of functions $\JJJ(P)\to\R$ that are 0-mesic under rowmotion is equal to $\Span_\R(\{T_A\mid A\in \mathcal{A}(P)\})$.
\label{antichain togg is 0-mesic}
\end{theorem}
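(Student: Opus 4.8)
The final statement to prove is Theorem~\ref{antichain togg is 0-mesic}, which characterizes the space of 0-mesic functions as the span of antichain toggleability statistics. However, this theorem is attributed to~\cite{defa:hvts} as an external citation, so I will sketch how one would prove such a result from scratch.

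The plan is to prove both inclusions. For the easy direction, $\Span_\R(\{T_A\mid A\in \mathcal{A}(P)\})$ is contained in the space of 0-mesic functions: this reduces to showing each $T_A$ is individually 0-mesic, which follows by the same counting argument used for the single-element $T_p$ statistics. Specifically, in any rowmotion orbit, an antichain $A$ is ``toggled in'' (i.e., $A\subseteq\min(P\setminus I)$) exactly as many times as it is ``toggled out'' (i.e., $A\subseteq\max(I)$), so the orbit-sum of $T_A$ telescopes to zero. Since 0-mesic functions form a vector space closed under linear combinations, the span is contained in the 0-mesic space.

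The hard direction — and the main obstacle — is the reverse inclusion: every 0-mesic function lies in $\Span_\R(\{T_A\mid A\in \mathcal{A}(P)\})$. First I would reformulate the problem linear-algebraically. Consider the vector space $V$ of all functions $\JJJ(P)\to\R$, which has dimension $\#\JJJ(P)$ with the indicator functions $\1(I = J)$ for $J\in\JJJ(P)$ as a natural basis. The 0-mesic functions form a subspace $M$, namely the orthogonal complement (with respect to the natural pairing on orbits) of the orbit-average functionals; equivalently, $M$ is the kernel of the linear map sending $f$ to its tuple of orbit-sums. The key step is a dimension count: I would show that $\dim M = \#\JJJ(P) - (\text{number of rowmotion orbits})$, since the orbit-sum functionals are linearly independent (one per orbit). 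It then suffices to show that the $T_A$ span a subspace of exactly this dimension inside $M$, or to produce, for any 0-mesic $f$, an explicit expansion in the $T_A$.

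The cleanest route to the explicit expansion uses a telescoping/averaging identity. For each ideal $J$, consider the ``order filter'' or indicator structure and observe that differences of indicator functions along an orbit can be generated by the $T_A$'s. The crucial combinatorial input is that the maps $I\mapsto \min(P\setminus I)$ and $I\mapsto \max(I)$, as $A$ ranges over all antichains, give enough independent ``local'' information to detect any within-orbit variation; concretely, one shows that the differences $\1(I=J)-\1(I=\rho(J))$ for $J\in\JJJ(P)$ — which span a complement to the orbit-averages and together with the orbit-averages span all of $V$ — can each be written as a linear combination of antichain toggleability statistics. The main difficulty lies precisely here: verifying that the $T_A$ are rich enough to express every such elementary orbit-difference, which requires a careful inductive or inclusion–exclusion argument over antichains (for instance, expressing single-ideal-transition indicators via inclusion–exclusion over the antichains contained in $\min(P\setminus I)$ and $\max(I)$). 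Once this spanning of the orbit-difference space is established, combined with the dimension count, the two inclusions force equality, completing the proof.
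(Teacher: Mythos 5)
The paper does not prove this statement at all --- it is quoted verbatim as Theorem 6.2 of \cite{defa:hvts} and used as a black box (e.g.\ in the proof of Theorem~\ref{homom equal dim}) --- so there is no internal proof to compare against; I can only assess your sketch on its own terms. Your easy direction is correct: since $\max(\rho(I))=\min(P\setminus I)$, one has $T_A^+(I)=T_A^-(\rho(I))$ where $T_A^{\pm}$ are defined for antichains just as $T_p^{\pm}$ are for elements, so the orbit-sum of $T_A=T_A^+-T_A^-$ telescopes to zero. Your dimension count for the hard direction is also correct: the $0$-mesic functions form the kernel of the orbit-sum map, of dimension $\#\JJJ(P)-\#\{\text{orbits}\}$, and this space is spanned by the coboundaries $\delta_J-\delta_{\rho(J)}$, equivalently is the image of the operator $\partial(g)\coloneqq g\circ\rho-g$.

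However, the step you yourself flag as ``the main difficulty'' --- showing the $T_A$ actually span this coboundary space --- is genuinely missing, and it is the entire content of the theorem; an appeal to ``a careful inductive or inclusion--exclusion argument'' is not a proof. The missing idea is the following pair of observations. First, $\{T_A^-\mid A\in\AAA(P)\}$ is a \emph{basis} of the full function space $\R^{\JJJ(P)}$: the map $A\mapsto\langle A\rangle$ is a bijection $\AAA(P)\to\JJJ(P)$, and since $T_A^-(I)=1$ forces $\langle A\rangle\subseteq I$ while $T_{\max(I)}^-(I)=1$, the matrix $\bigl[T_A^-(I)\bigr]$ is unitriangular with respect to any linear extension of containment of the generated ideals, hence invertible. (Your inclusion--exclusion remark is implicitly the inverse of this matrix, $\1(\max(I)=A)=\sum_{B\supseteq A}(-1)^{\#B-\#A}T_B^-(I)$, but you never establish invertibility or use it.) Second, $T_A^+=T_A^-\circ\rho$, so for any $g=\sum_A c_AT_A^-$ one gets $\partial(g)=\sum_A c_A(T_A^+-T_A^-)=\sum_A c_AT_A$; since the $T_A^-$ form a basis, this shows $\operatorname{im}\partial=\Span_\R(\{T_A\})$ exactly. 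Combining with $\ker\partial=\{\text{orbit-constant functions}\}$ and rank--nullity gives $\dim\Span_\R(\{T_A\})=\#\JJJ(P)-\#\{\text{orbits}\}$, which together with your containment and dimension count forces equality. Without this basis lemma your argument does not close.
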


So, if we can prove that a statistic can be written as a constant plus a linear combination of antichain toggleability statistics, then that statistic is homomesic. The difficulty is that the antichain toggleability statistics are not linearly independent in general, so even when one finds such expressions for smaller posets, it's challenging to find one which nicely generalizes to a whole family. In contrast, the toggleability statistics associated with single elements are linearly independent~\cite[Theorem 2.7]{defa:hvts}.

As an example of this, we prove that the statistic $\ac{a}-\ac{2a}$ on the fence $\breve{F}(a,a,a)$, which is a specific instance of the statistics found in Conjectures~\ref{peaks minus valleys self-dual} and~\ref{refined stats self-dual}, is 0-mesic by writing it as a linear combination of antichain toggleability statistics. We note that this homomesy can be derived as a consequence of Theorem~\ref{oic as peaks and valleys} and~\cite[Theorem 4.4]{eliz:rof}, but the proof provided here uses the antichain toggleability statistics directly.

\begin{ex}
Before we state the theorem for general $a$, we provide the example when $a=3$ which should help the reader parse the statement. Consider the fence $F=\breve{F}(3,3,3)$ shown in Figure~\ref{peak minus valley ex}. We have that
\begin{multline*}
\ac{3}-\ac{6}=-T_1-2T_2-3T_3-2T_6-2T_7-T_8+3T_{\{1,6\}}+3T_{\{1,7\}}+3T_{\{2,6\}}+3T_{\{2,7\}}+3T_{\{2,8\}}\\
+3T_{\{3,7\}}+3T_{\{3,8\}}-3T_{\{1,5,7\}}-3T_{\{2,4,8\}}.
\end{multline*}

\begin{figure}[!ht]
\centering
\tikzstyle{b} = [circle, draw, fill=black, inner sep=0mm, minimum size=1mm]
\tikzstyle{r} = [circle, draw, color=red, fill=red, inner sep=0mm, minimum size=1mm]
\tikzstyle{line} = [draw, -latex']
\begin{tikzpicture}[xscale=0.875,yscale=0.9,baseline={(0,2.5)}]
\node (1) at (0,0) [b] {};
\node (11) at (-.05,-0.5) {$x_1$};
\node (2) at (1,1) [b] {};
\node (12) at (1,0.5) {$x_2$};
\node (3) at (2,2) [b] {};
\node (13) at (2,1.5) {$x_3$};
\node (4) at (3,1) [b] {};
\node (14) at (3,0.5) {$x_4$};
\node (5) at (4,0) [b] {};
\node (15) at (4,-0.5) {$x_5$};
\node (6) at (5,-1) [b] {};
\node (16) at (5,-1.5) {$x_6$};
\node (7) at (6,0) [b] {};
\node (17) at (6,-0.5) {$x_7$};
\node (8) at (7,1) [b] {};
\node (18) at (7,0.5) {$x_8$};
\draw (1) -- (2);
\draw (2) -- (3);
\draw (3) -- (4);
\draw (4) -- (5);
\draw (5) -- (6);
\draw (6) -- (7);
\draw (7) -- (8);
\end{tikzpicture}
\caption{The fence $\breve{F}(3,3,3)$.}
\label{peak minus valley ex}
\end{figure}
\end{ex}

\begin{theorem}
Let $F=\breve{F}(a,a,a)$ for some $a\geq 2$. Then
\begin{multline}
\ac{a}-\ac{2a}=\sum_{i=1}^{a}(-iT_i)-(a-1)T_{2a}+\sum_{i=1}^{a-1}(-iT_{3a-i})\\+\sum_{i=1}^{a-1}\sum_{j=0}^i(aT_{\{i,2a+j\}})+\sum_{j=1}^{a-1}(aT_{\{a,2a+j\}})+\sum_{i=1}^{a-1}(-aT_{\{i,2a-i,2a+i\}}).
\label{peak minus valley eq for a,a,a}
\end{multline}
\label{peak minus valley for a,a,a}
\end{theorem}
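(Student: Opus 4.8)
The plan is to prove~\eqref{peak minus valley eq for a,a,a} as an identity of functions on $\JJJ(F)$ by evaluating both sides at an arbitrary ideal $I$, exactly as in the proofs of Theorems~\ref{peak and valley anti} and~\ref{oic basis equiv const}. First I would record the left-hand side. Writing $F=\breve{F}(a,a,a)$, so that $x_a$ is the peak shared by $S_1,S_2$ and $x_{2a}$ is the valley shared by $S_2,S_3$, the value $(\ac{a}-\ac{2a})(I)$ is $1$ when $x_a\in\max(I)$, is $-1$ when $x_{2a}\in\max(I)$, and is $0$ otherwise. These three possibilities are mutually exclusive: since $x_a$ is a poset peak, $x_a\in\max(I)$ is equivalent to $x_a\in I$, and $x_a\in I$ forces $x_{2a-1}\in I$, which in turn (as $x_{2a-1}$ covers $x_{2a}$) forces $x_{2a}\notin\max(I)$. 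Thus the target value of the right-hand side is completely determined by the local behaviour of $I$ at $x_a$ and $x_{2a}$.

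Next I would parametrize ideals and split into cases. Because $x_a$ separates $S_1$ from $S_2$ and $x_{2a}$ separates $S_2$ from $S_3$, an ideal $I$ is determined by its heights along the three segments, subject only to consistency at the shared elements. I would organize the verification into: Case A, $x_a\in I$ (forcing $S_1\cup S_2\subseteq I$, the only freedom being a bottom prefix $\{x_{2a},\dots,x_{2a+r}\}$ of $S_3$); Case B1, $x_a,x_{2a}\notin I$ (so $I=\{x_1,\dots,x_p\}\subseteq S_1$); and Case B2, $x_a\notin I$ but $x_{2a}\in I$, where $I$ is described by a triple $(p,s,r)$ recording how far it reaches up $S_1$, up $S_2$ from the valley, and up $S_3$ from the valley. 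In each case I would write down $\max(I)$ and $\min(F\setminus I)$ explicitly; the key structural points are that $x_{2a}$ lies in $\max(I)$ exactly when $s=1$ and $r=0$, and that the peak $x_a$ lies in $\min(F\setminus I)$ exactly when $p=a-1$ and $s=a$.

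With $\max(I)$ and $\min(F\setminus I)$ in hand, I would evaluate the right-hand side directly from the definition of the (antichain) toggleability statistics: each $T_A$ contributes $-1$ times its coefficient when $A\subseteq\max(I)$, contributes $+1$ times its coefficient when $A\subseteq\min(F\setminus I)$, and $0$ otherwise. For a fixed ideal only a few of the weighted single-element terms, together with a controlled handful of the two- and three-element antichains (those whose indexing sets actually meet $\max(I)$ or $\min(F\setminus I)$), can be active, so the sum collapses to a short expression that I would check equals the value recorded in the first step. The main obstacle is Case B2 and its boundary sub-cases: one must separately treat the shared valley ($s=1,r=0$ versus $s\ge 2$ or $r\ge 1$), the addability of the peak ($p=a-1,s=a$), and the small-$p$ regime, where the three-element antichains $T_{\{i,2a-i,2a+i\}}$ first become active (for instance at $p=0$). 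In these sub-cases the weighted single-element contributions $-iT_i$ and $-iT_{3a-i}$ must be balanced against the two-element antichains $aT_{\{i,2a+j\}}$, $aT_{\{a,2a+j\}}$ and the three-element antichains $aT_{\{i,2a-i,2a+i\}}$; verifying that these telescoping contributions combine to exactly $1$, $-1$, or $0$ as dictated by the left-hand side is the delicate computational heart of the proof.
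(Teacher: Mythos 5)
Your proposal is correct and follows essentially the same strategy as the paper: both sides of~\eqref{peak minus valley eq for a,a,a} are evaluated on an arbitrary ideal and checked to agree via an exhaustive case analysis, with the cases determined by how far the ideal reaches into each segment and by the membership of the peak $x_a$ and valley $x_{2a}$. The paper organizes this check as a table indexed by the fill-type (empty/partial/full) of the unshared part of each segment with the same disambiguation subcases you identify, while you parametrize ideals by heights $(p,s,r)$; the two organizations are equivalent and the computations you defer are exactly those tabulated in the paper.
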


\begin{proof}
Let
\begin{alignat*}{3}
T_{S_1}&=\sum_{i=1}^{a}-iT_i,   &T_{S_3}&=-(a-1)T_{2a}+\sum_{i=1}^{a-1}-iT_{3a-i},\\
T_{\text{pairs}}&=\sum_{i=1}^{a-1}\sum_{j=0}^iaT_{\{i,2a+j\}}+\sum_{j=1}^{a-1}aT_{\{a,2a+j\}},  \qquad &T_{\text{triples}}&=-a\sum_{i=1}^{a-1}T_{\{i,2a-i,2a+i\}}.\\
\end{alignat*}
The claim is that
\[
\ac{a}-\ac{2a}=T_{S_1}+T_{S_3}+T_{\text{pairs}}+T_{\text{triples}}.
\]
Since at most two elements per segment can be toggled for any ideal $I$, most of the (antichain) toggleability statistics are 0 for any given ideal. We show that~\eqref{peak minus valley eq for a,a,a} holds for every ideal by summarizing all possible cases in Table~\ref{all cases}. We use three rectangles as a shorthand to convey the type of the ideal $I$ based on the number of unshared elements in $S_i\cap I$, $1 \leq i \leq 3$. An empty rectangle means there are no unshared elements in $S_i\cap I$, a half-shaded rectangle means that $I$ contains some, but not all, unshared elements from $S_i$, and a shaded rectangle means that $I$ contains all unshared elements from $S_i$. In most cases, the type determines whether the peak and valley are in $I$. When it doesn't, we consider subcases indicated in the disambiguation column. Note that we sometimes intentionally leave cells unsimplified in an effort to give more detail about the contributions of the antichain toggleability statistics. Also note that the final column is always the sum of the third through sixth columns, which proves~\eqref{peak minus valley eq for a,a,a}.
\end{proof}

\begin{table}[!ht]
\centering
\renewcommand{\arraystretch}{1.25}
\begin{tabular}{| c | c | c | c | c | c | c |}
\hline
Unshared elements & Disambiguation & $T_{S_1}(I)$ & $T_{S_3}(I)$ & $T_{\text{pairs}}(I)$ & $T_{\text{triples}}(I)$ & $\ac{a}-\ac{2a}$\\
\hline
\makerecs{er}{er}{er} & $v\notin I$ & $-1$ & $-(a-1)$ & $a$ & $0$ & 0 \\
\hline
\makerecs{er}{er}{er} & $v\in I$ & $-1$ & 0 & $a$ & $-a$ & $-1$ \\
\hline
\makerecs{er}{er}{hr} & & $-1$ & 1 & 0 & 0 & 0 \\
\hline
\makerecs{er}{er}{fr} & & $-1$ & $1$ & $0$ & $0$ & 0 \\
\hline
\makerecs{er}{hr}{er} & & $-1$ & $-(a-1)$ & $a$ & 0 & 0 \\
\hline
\makerecs{er}{hr}{hr} & & $-1$ & $1$ & 0 & 0 & 0 \\
\hline
\makerecs{er}{hr}{fr} & & $-1$ & $1$ & 0 & 0 & 0 \\
\hline
\makerecs{er}{fr}{er} & & $-1$ & $-(a-1)$ & $a$ & $0$ & 0 \\
\hline
\makerecs{er}{fr}{hr} & & $-1$ & $1$ & 0 & 0 & 0 \\
\hline
\makerecs{er}{fr}{fr} & & $-1$ & $1$ & $0$ & $0$ & 0 \\
\hline
\makerecs{hr}{er}{er} & $v\notin I$ & $-1$ & $-(a-1)$ & $a$ & $0$ & 0 \\
\hline
\makerecs{hr}{er}{er} & $v\in I$ & $-1$ & 0 & $-a+a$ & 0 & $-1$ \\
\hline
\makerecs{hr}{er}{hr} & & $-1$ & $1$ & 0 or $-a+a$ & 0 & 0 \\
\hline
\makerecs{hr}{er}{fr} & & $-1$ & $1$ & 0 & 0 & 0 \\
\hline
\makerecs{hr}{hr}{er} & & $-1$ & $-(a-1)$ & $a$ & 0 & 0 \\
\hline
\makerecs{hr}{hr}{hr} & & $-1$ & $1$ & 0 or $-a+a$ & 0 or $a-a$ & 0 \\
\hline
\makerecs{hr}{hr}{fr} & & $-1$ & $1$ & 0 & 0 & 0 \\
\hline
\makerecs{hr}{fr}{er} & & $-1$ & $-(a-1)$ & $a$ & 0 & 0 \\
\hline
\makerecs{hr}{fr}{hr} & & $-1$ & $1$ & 0 or $-a+a$ & 0 & 0 \\
\hline
\makerecs{hr}{fr}{fr} & & $-1$ & $1$ & 0 & 0 & 0 \\
\hline
\makerecs{fr}{er}{er} & $v\notin I$ & $a-1$ & $-(a-1)$ & $0$ & $0$ & 0 \\
\hline
\makerecs{fr}{er}{er} & $v\in I$ & $a-1$ & $0$ & $-a$ & 0 & $-1$ \\
\hline
\makerecs{fr}{er}{hr} & & $a-1$ & $1$ & $-a$ & 0 & 0 \\
\hline
\makerecs{fr}{er}{fr} & & $a-1$ & $1$ & $-a$ & $0$ & 0 \\
\hline
\makerecs{fr}{hr}{er} & & $a-1$ & $-(a-1)$ & 0 & 0 & 0 \\
\hline
\makerecs{fr}{hr}{hr} & & $a-1$ & $1$ & $-a$ & 0 & 0 \\
\hline
\makerecs{fr}{hr}{fr} & & $a-1$ & $1$ & $-a$ & 0 & 0 \\
\hline
\makerecs{fr}{fr}{er} & $p\notin I$ & $-1$ & $-(a-1)$ & $a$ & 0 & 0 \\
\hline
\makerecs{fr}{fr}{er} & $p\in I$ & $a$ & $-(a-1)$ & $0$ & $0$ & 1 \\
\hline
\makerecs{fr}{fr}{hr} & $p\notin I$ & $-1$ & $1$ & $-a+a$ & 0 & 0 \\
\hline
\makerecs{fr}{fr}{hr} & $p\in I$ & $a$ & $1$ & $-a$ & $0$ & 1 \\
\hline
\makerecs{fr}{fr}{fr} & $p\notin I$ & $-1$ & $1$ & $-a$ & $a$ & 0 \\
\hline
\makerecs{fr}{fr}{fr} & $p\in I$ & $a$ & $1$ & $-a$ & $0$ & 1 \\
\hline
\end{tabular}
\caption{Cases for the proof of Theorem~\ref{peak minus valley for a,a,a}.}
\label{all cases}
\end{table}

We note that even though Theorem~\ref{peak minus valley for a,a,a} treats fences with only three segments, the proof involves checking many cases and cannot be generalized as is. However, although we were unsuccessful in finding a general pattern for the coefficients of the antichain toggleability statistics for any of the statistics in Conjectures~\ref{peaks minus valleys self-dual} and~\ref{refined stats self-dual}, the approach may yet prove fruitful with the proper insight.

We have been focusing on differences of antichain indicator functions for ``opposite" peaks and valleys of $\breve{F}(a^t)$, $t$ odd, but we can discuss sums and differences of indicator functions for opposite elements more generally. We will do this next in the expanded context of self-dual fences. We show that, in fact, if all differences of antichain indicator functions for opposite peaks and valleys are 0-mesic for a self-dual fence, then all differences of antichain indicator functions for opposite elements are 0-mesic and all sums of order ideal indicator functions for opposite elements are 1-mesic.

\begin{theorem}
Let $\alpha=(\alpha_1,\dots,\alpha_t)$ be palindromic with $t$ odd, $F=\breve{F}(\alpha)$ be the associated fence, and $n=\# F$. Assume $\ac{s_i}-\ac{s_{t-1}}$ is 0-mesic for all $i\in[t-1]$. Then 
\begin{enumerate}
\item $\ac{k}-\ac{n-k+1}$ is 0-mesic for all $k\in[n]$, and 
\item $\oic{k}+\oic{n-k+1}$ is 1-mesic for all $k\in[n]$.
\end{enumerate}
\label{shared diffs are sufficient}
\end{theorem}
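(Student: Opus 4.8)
The plan is to prove (1) first and then deduce (2) from it, working throughout with the fact that toggleability statistics $T_p$ are 0-mesic and that, by (1) and the hypothesis, differences of antichain indicators for opposite elements are 0-mesic. The palindromic hypothesis with $t$ odd makes the reflection $\varphi\colon x_k\mapsto x_{n-k+1}$ an order-reversing involution of $F$: it carries $S_i$ onto $S_{t+1-i}$, sends unshared elements to unshared elements and peaks to valleys, identifies the opposite of the shared element $s_m$ with $s_{t-m}$, and satisfies $\alpha_i=\alpha_{t+1-i}$. I will use these structural facts repeatedly.

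For (1), I split on the type of $x_k$. If $x_k$ is shared, say $x_k=s_m$ with $m\in[t-1]$, then $x_{n-k+1}=s_{t-m}$ and $\chi_k-\chi_{n-k+1}=\chi_{s_m}-\chi_{s_{t-m}}$ is 0-mesic directly by hypothesis. If $x_k$ is unshared, lying in $S_i$, then $x_{n-k+1}$ is unshared in $S_{t+1-i}$, and I apply Theorem~\ref{peak and valley anti} in the form $\chi_{x}\equiv\frac{1}{\alpha_i}(1-\chi_{s_{i-1}}-\chi_{s_i})$ for $x\in\uns{i}$ (with the convention that an absent bounding shared element contributes $0$), together with the analogous expression for $x_{n-k+1}$ in $S_{t+1-i}$. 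Subtracting and using $\alpha_i=\alpha_{t+1-i}$, the difference $\chi_k-\chi_{n-k+1}$ becomes, modulo the $T_p$'s, the combination $-\tfrac{1}{\alpha_i}\bigl[(\chi_{s_i}-\chi_{s_{t-i}})+(\chi_{s_{i-1}}-\chi_{s_{t-(i-1)}})\bigr]$ of opposite shared differences. Each such difference is 0-mesic by hypothesis and each $T_p$ is 0-mesic, so $\chi_k-\chi_{n-k+1}$ is 0-mesic.

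For (2), I convert order ideal indicators to antichain indicators via~\eqref{oic to ac dict} and reduce to (1). If $x_k$ is a peak, then $x_{n-k+1}$ is a valley and
\[
\hat{\chi}_k+\hat{\chi}_{n-k+1}=\chi_k+\bigl(1-T_{n-k+1}-\chi_{n-k+1}\bigr)=1+(\chi_k-\chi_{n-k+1})-T_{n-k+1};
\]
the two non-constant pieces are 0-mesic (by (1) and by 0-mesicity of $T$), so the sum is 1-mesic, and the valley case is symmetric. If $x_k$ is unshared in $S_i$, then its principal filter is the chain running up to the peak $p_i$ of $S_i$, so~\eqref{oic to ac dict} gives $\hat{\chi}_{x_k}=\chi_{p_i}+\sum_{u\in\uns{i},\,u\rtrieq x_k}\chi_u$. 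Replacing each $\chi_u$ by $\chi_{x_k}$ modulo the $T_p$'s via Corollary~\ref{anti homom cor}(1) and then applying Theorem~\ref{peak and valley anti}, this reduces modulo 0-mesic statistics to $\chi_{p_i}+\tfrac{r}{\alpha_i}(1-\chi_{p_i}-\chi_{v_i})$, where $r=\#\{u\in\uns{i}:u\rtrieq x_k\}$; the opposite element $x_{n-k+1}$ yields the same shape with a count $r'$ in $\uns{t+1-i}$.

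The crux of the unshared case is the counting identity $r+r'=\alpha_i$. Because $\varphi$ restricts to an order-reversing bijection of the chain $\uns{i}$ onto $\uns{t+1-i}$, the filter of $x_k$ in $\uns{i}$ corresponds to the ideal of $x_k$ in $\uns{i}$; these two intervals of the chain overlap exactly in $x_k$, so $r+r'=\beta_i+1=\alpha_i$. Feeding this, together with the opposite-element identities $\chi_{p_{t+1-i}}\equiv\chi_{v_i}$ and $\chi_{v_{t+1-i}}\equiv\chi_{p_i}$ modulo 0-mesic statistics (instances of (1)), into $\hat{\chi}_{x_k}+\hat{\chi}_{x_{n-k+1}}$ collapses the expression to $\chi_{p_i}+\chi_{v_i}+\tfrac{r+r'}{\alpha_i}(1-\chi_{p_i}-\chi_{v_i})=1$, so the sum is 1-mesic. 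I expect this bookkeeping to be the only delicate point: correctly matching opposite peaks with valleys, verifying the count identity, and handling the boundary segments $S_1,S_t$ where a peak or valley is absent and the corresponding indicator is taken to be $0$. Everything else is a direct assembly of Theorem~\ref{peak and valley anti}, Corollary~\ref{anti homom cor}, and the dictionary~\eqref{oic to ac dict}.
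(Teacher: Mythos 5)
Your proposal is correct and follows essentially the same route as the paper's proof: part (1) via Theorem~\ref{peak and valley anti} and the palindromicity $\alpha_i=\alpha_{t-i+1}$, and part (2) via the dictionary~\eqref{oic to ac dict} reducing to differences of antichain indicators at opposite shared elements, with your counting identity $r+r'=\alpha_i$ being exactly the paper's $(\alpha_i-j)+j=\alpha_i$. The only (cosmetic) difference is that the paper isolates the $0$-mesic differences $\ac{p_i}-\ac{v_{t-i+1}}$ and $\ac{p_{t-i+1}}-\ac{v_i}$ with explicit coefficients $\tfrac{j}{\alpha_i}$, $\tfrac{\alpha_i-j}{\alpha_i}$, whereas you substitute them away to collapse the expression to the constant $1$.
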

\begin{proof}
(1) If $x_k$ and $x_{n-k+1}$ are shared elements, the result follows by assumption. Assume then that $x_k$ and $x_{n-k+1}$ are unshared elements. Let $x_k\in S_i$, and note that this implies that $x_{n-k+1}\in S_{t-i+1}$. Let $p_j$ (resp. $v_j$) denote the peak (resp. valley) of segment $S_j$, $j \in [t-1]$. By Theorem~\ref{peak and valley anti},
\begin{equation}
\ac{x}\equiv \frac{1}{\alpha_j}(1-\ac{p_j}-\ac{v_j})
\label{ac 3.1 solve}
\end{equation}
for each unshared $x\in S_j$. Combining this with  $\alpha_i=\alpha_{t-i+1}$ (since $\alpha$ is palindromic), we have
\[
\ac{k}-\ac{n-k+1}\equiv \frac{1}{\alpha_i}(1-\ac{p_i}-\ac{v_i})-\frac{1}{\alpha_i}(1-\ac{p_{t-i+1}}-\ac{v_{t-i+1}})=\frac{1}{\alpha_i}(\ac{v_{t-i+1}}-\ac{p_i})+\frac{1}{\alpha_i}(\ac{p_{t-i+1}}-\ac{v_i}).
\]
The combinations of statistics in each set of parentheses are 0-mesic by assumption, so $\ac{k}-\ac{n-k+1}$ is 0-mesic for all $k\in[n]$.

(2) Fix $k\in[n]$. If $x_k$ is a shared element, we can assume without loss of generality that $x_k$ is a peak. Then $x_{n-k+1}$ is a valley, and using~\eqref{oic to ac dict}, we get
\[
\oic{k}+\oic{n-k+1}=1+\ac{k}-\ac{n-k+1}-T_{n-k+1}\equiv 1+\ac{k}-\ac{n-k+1}.
\]
Since $\ac{k}-\ac{n-k+1}$ is 0-mesic, we conclude that $\oic{k}+\oic{n-k+1}$ is 1-mesic.

Suppose now that $x_k$ is an unshared element, say $x_k=\unse{i}{j}$. Then $x_{n-k+1}=\unse{t-i+1}{\alpha_{t-i+1}-j}$. Using~\eqref{oic to ac dict}, ~\eqref{ac 3.1 solve}, and $\alpha_i=\alpha_{t-i+1}$, we have
\begin{align*}
\oic{k}+\oic{n-k+1}&=\ac{p_i}+\sum_{m=j}^{\alpha_i-1}\ac{(i,m)}+\ac{p_{t-i+1}}+\sum_{m=\alpha_i-j}^{\alpha_i-1}\ac{(t-i+1,m)}\\
&\equiv \ac{p_i}+\left(\frac{\alpha_i-j}{\alpha_i}\right)(1-\ac{p_i}-\ac{v_i})+\ac{p_{t-i+1}}+\left(\frac{j}{\alpha_i}\right)(1-\ac{p_{t-i+1}}-\ac{v_{t-i+1}})\\
&=1+\left(\frac{j}{\alpha_i}\right)(\ac{p_i}-\ac{v_{t-i+1}})+\left(\frac{\alpha_i-j}{\alpha_i}\right)(\ac{p_{t-i+1}}-\ac{v_i}).
\end{align*}
Since $\ac{p_i}-\ac{v_{t-i+1}}$ and $\ac{p_{t-i+1}}-\ac{v_i}$ are 0-mesic by assumption, we have that $\oic{k}+\oic{n-k+1}$ is 1-mesic.
\end{proof}

A similar result can be shown in the case where only the sums of order ideal indicator functions for opposite shared elements are assumed to be 1-mesic.
\begin{theorem}
Let $\alpha=(\alpha_1,\dots,\alpha_t)$ be palindromic with $t$ odd, $F=\breve{F}(\alpha)$ be the associated fence, and $n=\# F$. Assume $\oic{s_i}+\oic{s_{t-i}}$ is 1-mesic for all $i\in[t-1]$. Then we have that 
\begin{enumerate}
\item $\oic{k}+\oic{n-k+1}$ is 1-mesic for all $k\in[n]$, and 
\item $\ac{k}-\ac{n-k+1}$ is 0-mesic for all $k\in[n]$.
\end{enumerate}
\label{shared sums are sufficient}
\end{theorem}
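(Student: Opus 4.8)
The plan is to reduce this statement directly to Theorem~\ref{shared diffs are sufficient}, rather than redo any case analysis over general elements $k\in[n]$. The key observation is that the two conclusions claimed here are \emph{identical} (merely reordered) to the two conclusions of Theorem~\ref{shared diffs are sufficient}, so it suffices to show that the hypothesis assumed here, namely that $\oic{s_i}+\oic{s_{t-i}}$ is $1$-mesic for all $i\in[t-1]$, is equivalent to the hypothesis of that theorem, namely that $\ac{s_i}-\ac{s_{t-i}}$ is $0$-mesic for all $i\in[t-1]$. Establishing this equivalence orbit by orbit immediately yields both (1) and (2).

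To carry this out I would fix $i\in[t-1]$ and translate the shared-element sum into the shared-element difference via the dictionary~\eqref{oic to ac dict}. Because $t$ is odd, exactly one of $i$ and $t-i$ is odd; say $i$ is odd (the opposite parity is symmetric), so that $s_i$ is a peak and $s_{t-i}$ is a valley. Then~\eqref{oic to ac dict} gives $\oic{s_i}=\ac{s_i}$ and $\oic{s_{t-i}}=1-T_{s_{t-i}}-\ac{s_{t-i}}$, whence
\[
\oic{s_i}+\oic{s_{t-i}}=1+\big(\ac{s_i}-\ac{s_{t-i}}\big)-T_{s_{t-i}}.
\]
Since a single toggleability statistic is $0$-mesic, the term $T_{s_{t-i}}$ contributes $0$ to every orbit average, so the orbit average of $\oic{s_i}+\oic{s_{t-i}}$ equals $1$ plus the orbit average of $\ac{s_i}-\ac{s_{t-i}}$. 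Consequently $\oic{s_i}+\oic{s_{t-i}}$ is $1$-mesic if and only if $\ac{s_i}-\ac{s_{t-i}}$ is $0$-mesic.

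Applying this equivalence for every $i\in[t-1]$, the assumption that each $\oic{s_i}+\oic{s_{t-i}}$ is $1$-mesic becomes precisely the assumption that each $\ac{s_i}-\ac{s_{t-i}}$ is $0$-mesic, which is the hypothesis of Theorem~\ref{shared diffs are sufficient}. Invoking that theorem then gives both that $\ac{k}-\ac{n-k+1}$ is $0$-mesic and that $\oic{k}+\oic{n-k+1}$ is $1$-mesic for all $k\in[n]$, establishing (1) and (2). I do not expect a substantive obstacle: the only point requiring care is the parity bookkeeping that decides which of $s_i, s_{t-i}$ is the peak and which is the valley (and hence which toggle survives in the displayed identity), but since $t$ is odd these two shared elements always have opposite type and are distinct (the equality $t=2i$ is impossible), so this is routine. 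The entire content of the proof is the recognition that the ideal-indicator-sum hypothesis and the antichain-indicator-difference hypothesis coincide after discarding a single $0$-mesic toggle.
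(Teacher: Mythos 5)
Your proof is correct, but it takes a genuinely different route from the paper. The paper proves this theorem from scratch: it treats shared elements via the dictionary~\eqref{ac to oic dict}--\eqref{oic to ac dict} and then runs a case analysis over unshared elements $x_k=s_{(i,j)}$, using Theorem~\ref{oic basis equiv const} to rewrite $\oic{(i,j)}$ in terms of $\oic{p_i}$ and $\oic{v_i}$ modulo toggleability statistics and regrouping into the assumed $1$-mesic pairs --- essentially mirroring, on the order-ideal side, the argument already given for Theorem~\ref{shared diffs are sufficient}. You instead observe that the two theorems have identical conclusions and that their hypotheses are equivalent: since $t$ is odd, $s_i$ and $s_{t-i}$ are a peak and a valley (and are distinct), so~\eqref{oic to ac dict} gives $\oic{s_i}+\oic{s_{t-i}}=1\pm\bigl(\ac{s_i}-\ac{s_{t-i}}\bigr)-T_{s_j}$ for the appropriate valley $s_j$, and discarding the $0$-mesic toggleability statistic shows the sum is $1$-mesic on an orbit iff the difference is $0$-mesic on it. Invoking Theorem~\ref{shared diffs are sufficient} then finishes the proof in one line. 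This is shorter and arguably cleaner, since it makes Corollary~\ref{rof iff} transparent (your equivalence is precisely its shared-element instance) and avoids duplicating the case analysis; what the paper's direct argument buys is an explicit expression of each $\oic{k}+\oic{n-k+1}$ as a combination of the assumed statistics, which is mildly more informative. One small point of care: the hypothesis of Theorem~\ref{shared diffs are sufficient} as printed reads $\ac{s_i}-\ac{s_{t-1}}$, which you (correctly, as its proof confirms) interpret as $\ac{s_i}-\ac{s_{t-i}}$; your reduction depends on that reading.
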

\begin{proof}
(1) If $x_k$ and $x_{n-k+1}$ are shared elements, the result follows by assumption. Assume then that $x_k$ and $x_{n-k+1}$ are unshared elements, say $x_k=s_{(i,j)}$ and hence $x_{n-k+1}=s_{(t-i+1,\alpha_{t-i+1}-j)}$. We consider cases based on whether one of $x_k$, $x_{n-k+1}$ is in $S_1$. 

Case 1: Assume one of $x_k$ or $x_{n-k+1}$ is in $S_1$. Without loss of generality, assume $x_k\in S_1$. Note that $S_1$ has no valley and $S_t$ has no peak. By Theorem~\ref{oic basis equiv const} and the palindromicity of $\alpha$, we have
\[
\oic{(i,j)}\equiv \frac{1}{\alpha_1}(\alpha_1-j+j\oic{p_1})\quad \text{and}\quad \oic{(t-i+1,\alpha_1-j)}\equiv\frac{1}{\alpha_1}(j\oic{v_{t-i+1}}),
\]
so that 
\[
\oic{k}+\oic{n-k+1}\equiv \frac{1}{\alpha_i}(\alpha_i-j+j\oic{p_i})+\frac{1}{\alpha_i}(j\oic{v_{t-i+1}})=\frac{\alpha_i-j}{\alpha_i}+\frac{j}{\alpha_i}(\oic{p_i}+\oic{v_{t-i+1}}).
\]
We have that $\oic{p_i}+\oic{v_{t-i+1}}$ is 1-mesic by assumption, so $\oic{k}+\oic{n-k+1}$ is 1-mesic in this case.

Case 2: Assume $x_k$ and $x_{n-k+1}$ are not in $S_1$. Then, by Theorem~\ref{oic basis equiv const},
\begin{align*}
\oic{k}+\oic{n-k+1}&\equiv \frac{1}{\alpha_i}(j\oic{p_i}+(\alpha_i-j)\oic{v_i})+\frac{1}{\alpha_i}((\alpha_i-j)\oic{p_{t-i+1}}+j\oic{v_{t-i+1}})\\
&=\frac{j}{\alpha_i}(\oic{p_i}+\oic{v_{t-i+1}})+\frac{\alpha_i-j}{\alpha_i}(\oic{v_i}+\oic{p_{t-i+1}}).
\end{align*}
The combinations of statistics in each set of parentheses are 1-mesic by assumption, so $\oic{k}+\oic{n-k+1}$ is 1-mesic for all $k\in[n]$.

(2) Fix $k\in[n]$. If $x_k$ is a shared element, we can assume without loss of generality that $x_k$ is a peak. Then $x_{n-k+1}$ is a valley, and using~\ref{ac to oic dict} yields
\[
\ac{k}-\ac{n-k+1}=\oic{k}+\oic{n-k+1}-1+T_{n-k+1}\equiv\oic{k}+\oic{n-k+1}-1.
\]
Since $\oic{k}+\oic{n-k+1}$ is 1-mesic, we conclude that $\ac{k}-\ac{n-k+1}$ is 0-mesic.

If $x_k$ is unshared, say $x_k=\unse{i}{j}$, then $x_{n-k+1}=\unse{t-i+1}{\alpha_{t-i+1}-j}$. Suppose that $x_k$, and hence $x_{n-k+1}$, is on an up-segment (the other case is similar). We consider two cases based on whether or not $x_{k}$ or $x_{n-k+1}$ is in $S_1$. In both cases, we show the proof assuming that the covering elements $x_{k+1}$ and $x_{n-k+2}$ are unshared elements (if they exist). Considering the cases when one (or both) of the covering elements is a peak is very similar, so we omit the details.

Case 1: Assume $x_{k}$ or $x_{n-k+1}$ is in $S_1$. Without loss of generality, we can assume $x_{k}\in S_1$. Using the palindromicity of $\alpha$,~\ref{ac to oic dict}, and that
\[
\oic{(i,j)}\equiv \frac{1}{\alpha_i}(\1(S_i \text{ has no valley})(\alpha_i-j)+j\oic{p_i}+(\alpha_i-j)\oic{v_i})
\]
by Theorem~\ref{oic basis equiv const}, we have
\begin{align*}
\ac{k}-\ac{n-k+1}&=\oic{k}-\oic{k+1}-\oic{n-k+1}+\oic{n-k+2}\\
&\equiv \frac{1}{\alpha_i}-\frac{1}{\alpha_i}\oic{p_i}-\frac{1}{\alpha_i}\oic{v_{t-i+1}}\\
&=\frac{1}{\alpha_i}-\frac{1}{\alpha_i}(\oic{p_i}+\oic{v_{t-i+1}}).\\
\end{align*}
Since $\oic{p_i}+\oic{v_{t-i+1}}$ is 1-mesic by assumption, we have that $\ac{k}-\ac{n-k+1}$ is 0-mesic.

Case 2: Assume $x_{k}$ and $x_{n-k+1}$ are not in $S_1$. Then
\begin{align*}
\ac{k}-\ac{n-k+1}&=\oic{k}-\oic{k+1}-\oic{n-k+1}+\oic{n-k+2}\\
&\equiv \frac{1}{\alpha_i}\ac{v_i}-\frac{1}{\alpha_i}\ac{p_i}-\frac{1}{\alpha_i}\ac{v_{t-i+1}}+\frac{1}{\alpha_{i}}\ac{p_{t-i+1}}\\
&=\frac{1}{\alpha_i}(\oic{v_i}+\oic{p_{t-i+1}})-\frac{1}{\alpha_i}(\oic{p_i}+\oic{v_{t-i+1}}),\\
\end{align*}
which implies $\ac{k}-\ac{n-k+1}$ is 0-mesic.
\end{proof}

The statistics in Theorems~\ref{shared diffs are sufficient} and~\ref{shared sums are sufficient} were considered in~\cite{eliz:rof} and were shown to be homomesic under additional palindromic constraints related to the realization of rowmotion orbits as tilings. By combining the two previous theorems, we get as a corollary that these sets of homomesies are equivalent for self-dual fences.

\begin{cor}
Let $\alpha=(\alpha_1,\dots,\alpha_t)$ be palindromic with $t$ odd, $F=\breve{F}(\alpha)$ be the associated fence, and $n=\# F$. The statistics $\ac{k}-\ac{n-k+1}$ are 0-mesic for all $k\in[n]$ if and only if the statistics $\oic{k}+\oic{n-k+1}$ are 1-mesic for all $k\in[n]$.
\label{rof iff}
\end{cor}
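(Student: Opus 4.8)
The plan is to derive this directly from Theorems~\ref{shared diffs are sufficient} and~\ref{shared sums are sufficient}, which already do all the heavy lifting; the corollary is essentially a packaging statement whose only content is the observation that the \emph{global} hypothesis ranging over all opposite pairs $\{x_k,x_{n-k+1}\}$ specializes, upon restriction to the shared elements, to exactly the \emph{shared-element} hypothesis demanded by each of the two theorems. So I would set up the two implications as mirror images of one another and dispatch each by invoking the relevant theorem's second conclusion.

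For the forward direction, I would assume that $\ac{k}-\ac{n-k+1}$ is $0$-mesic for every $k\in[n]$. The one point that needs care is the indexing of opposite shared elements: since $\alpha$ is palindromic with $t$ odd, the order-reversing symmetry $x_k\mapsto x_{n-k+1}$ carries $S_i$ to $S_{t-i+1}$ (this is already used in the proof of Theorem~\ref{shared diffs are sufficient}), so the element shared by $S_i$ and $S_{i+1}$ is sent to the element shared by $S_{t-i}$ and $S_{t-i+1}$, namely $s_{t-i}$. Hence for each $i\in[t-1]$ there is a $k$ with $x_k=s_i$ and $x_{n-k+1}=s_{t-i}$, and applying the hypothesis to that $k$ says precisely that $\ac{s_i}-\ac{s_{t-i}}$ is $0$-mesic. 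This is exactly the hypothesis of Theorem~\ref{shared diffs are sufficient}, whose conclusion~(2) then gives that $\oic{k}+\oic{n-k+1}$ is $1$-mesic for all $k\in[n]$, as required.

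The reverse direction is symmetric: assuming $\oic{k}+\oic{n-k+1}$ is $1$-mesic for all $k\in[n]$, I would restrict to the shared elements via the same indexing observation to conclude that $\oic{s_i}+\oic{s_{t-i}}$ is $1$-mesic for all $i\in[t-1]$, which is the hypothesis of Theorem~\ref{shared sums are sufficient}; its conclusion~(2) then yields that $\ac{k}-\ac{n-k+1}$ is $0$-mesic for all $k\in[n]$. Because the two theorems already supply both implications, there is no genuine obstacle in the argument itself; the only substantive step is verifying once that opposite shared elements are $s_i$ and $s_{t-i}$ (and noting that $t$ odd forces $i\neq t-i$, so these are always differences of distinct elements). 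I would therefore state the indexing check a single time and then close both directions by quoting the second conclusion of each theorem.
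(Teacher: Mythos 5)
Your proof is correct and matches the paper's approach: the paper derives Corollary~\ref{rof iff} exactly by combining Theorems~\ref{shared diffs are sufficient} and~\ref{shared sums are sufficient}, restricting the global hypothesis to the shared elements in each direction. Your explicit check that opposite shared elements are $s_i$ and $s_{t-i}$ is the only detail the paper leaves implicit, and it is verified correctly.
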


While we haven't found a counterexample or been able to prove the homomesies in Conjectures~\ref{peaks minus valleys self-dual} and~\ref{refined stats self-dual}, we can prove a weaker result which is similar in flavor to a result in~\cite{eliz:rof}. An \emph{order filter} of a poset $P$ is a set $U$ where $x\in U$ and $y\geq x$ implies $y\in U$. Let $\UUU(P)$ denote the set of order filters of $P$. Let $\nabla:\UUU(P)\to\AAA(P)$ be the map defined as
\[
\nabla(U)\coloneqq\{x\in U\mid x\in\min(U)\},
\]
and let $\Delta:\JJJ(P)\to\AAA(P)$ be the map
\[
\Delta(I)\coloneqq\{x\in I\mid x\in\max(I)\}.
\]
Additionally, let $c:P\to P$ be the complement map given by $c(S)=P\setminus S$ for any subset $S\subseteq P$. Note that rowmotion is a composition of the previous maps (and their inverses):
\[
\rho=\Delta^{-1}\circ\nabla\circ c.
\]
In fact, different versions of rowmotion on antichains and order filters have been studied which are compositions of the maps defined above and their inverses, see for example~\cite{jose:atar}.

Let $P$ be a self-dual poset and $\kappa: P\to P$ an order-reversing bijection. We define the \emph{ideal complement} (with respect to $\kappa$) of $I$:
\[
\overline{I}\coloneqq\kappa\circ c(I).
\]

The following lemma connects $I$ and $\overline{I}$ via rowmotion.
\begin{lemma}[\cite{eliz:rof}]
Let $P$ be self-dual and fix an order-reversing bijection $\kappa:P\to P$. Then for all $I\in\JJJ(P)$,
\[
\rho^{-1}(\overline{I})=\overline{\rho(I)},
\]
where the ideal complements are with respect to $\kappa$.
\end{lemma}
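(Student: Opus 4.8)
The plan is to recast the identity as an operator relation and verify it by decomposing $\rho$ into its three constituent maps and recording how $\kappa$ and $c$ interact with each of them. Write $\sigma(I):=\overline{I}=\kappa\circ c(I)$ for the ideal-complement operator; since $c$ sends an ideal to an order filter and the order-reversing bijection $\kappa$ sends that filter back to an ideal, $\sigma$ is a well-defined map $\JJJ(P)\to\JJJ(P)$. In this language the desired equality $\rho^{-1}(\overline{I})=\overline{\rho(I)}$ is precisely the operator identity $\rho^{-1}\circ\sigma=\sigma\circ\rho$, which is what I would prove.

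First I would record two compatibility facts. Because $\kappa$ is a bijection, $P\setminus\kappa(S)=\kappa(P\setminus S)$ for every $S\subseteq P$, so $c$ and $\kappa$ commute as operations on subsets and $\sigma=\kappa\circ c=c\circ\kappa$. Second, because $\kappa$ is order-reversing it interchanges minimal and maximal elements: for an order filter $U$ the ideal $\kappa(U)$ satisfies $\max\kappa(U)=\kappa(\min U)$, and dually $\min\kappa(I)=\kappa(\max I)$ for an ideal $I$. In the notation of the excerpt these read $\Delta\circ\kappa=\kappa\circ\nabla$ (as maps $\UUU(P)\to\AAA(P)$) and $\nabla\circ\kappa=\kappa\circ\Delta$ (as maps $\JJJ(P)\to\AAA(P)$), where on the right $\kappa$ denotes its induced action on antichains.

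With these relations in place both sides collapse to a common composite. Using $\sigma=c\circ\kappa$ and $\rho=\Delta^{-1}\circ\nabla\circ c$, the relation $\nabla\circ\kappa=\kappa\circ\Delta$ rearranges to $\kappa\circ\Delta^{-1}=\nabla^{-1}\circ\kappa$, so
\[
\sigma\circ\rho=c\circ\kappa\circ\Delta^{-1}\circ\nabla\circ c=c\circ\nabla^{-1}\circ\kappa\circ\nabla\circ c.
\]
On the other side I would use $\rho^{-1}=c\circ\nabla^{-1}\circ\Delta$ and $\sigma=\kappa\circ c$; then the relation $\Delta\circ\kappa=\kappa\circ\nabla$ gives
\[
\rho^{-1}\circ\sigma=c\circ\nabla^{-1}\circ\Delta\circ\kappa\circ c=c\circ\nabla^{-1}\circ\kappa\circ\nabla\circ c,
\]
which is the same composite. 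Hence $\rho^{-1}\circ\sigma=\sigma\circ\rho$, and evaluating at $I$ yields the lemma.

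The only real subtlety, and the step I would treat most carefully, is the domain-and-codomain bookkeeping: the symbols $c$, $\kappa$, $\nabla$, $\Delta$ each occur with several different source/target pairs among $\JJJ(P)$, $\UUU(P)$, and $\AAA(P)$, and the cancellations above silently use that the complement map on ideals and on filters are mutually inverse and that $\Delta,\nabla$ are bijections. The single geometric input is the identity $\max\kappa(U)=\kappa(\min U)$ — that an order-reversing bijection carries the minimal antichain of a filter to the maximal antichain of the corresponding ideal — which is elementary but is exactly where the order-reversing hypothesis on $\kappa$ is used.
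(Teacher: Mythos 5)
Your proof is correct and takes essentially the same route as the paper, which establishes the lemma by exhibiting the commutative diagram~\eqref{comm diagram}: your two intertwining relations $\Delta\circ\kappa=\kappa\circ\nabla$ and $\nabla\circ\kappa=\kappa\circ\Delta$, together with $c\circ\kappa=\kappa\circ c$, are precisely the commuting squares of that diagram, and your operator bookkeeping (including $\rho^{-1}=c\circ\nabla^{-1}\circ\Delta$) checks out.
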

To prove this lemma, it was shown that the following diagram commutes:

\begin{equation}
\begin{tikzcd}
I\ar{r}{c} & U\ar{r}{\nabla}\ar{d}{\kappa} & A\ar{r}{\Delta^{-1}}\ar{d}{\kappa} & J\ar{d}{\kappa} & \\
& \overline{I}\ar{r}{\Delta} & B\ar{r}{\nabla^{-1}} & V\ar{r}{c} & K
\end{tikzcd}
\label{comm diagram}
\end{equation}
We will use the above diagram to state a similar result. Let $\kappa$ be an order-reversing bijection on a self-dual poset $P$. Given an ideal $I\in\JJJ(P)$, define 
\[
I'\coloneqq \Delta^{-1}\circ\kappa\circ\Delta(I).
\]

\begin{lemma}
Let $P$ be self-dual and fix an order-reversing bijection $\kappa:P\to P$. If $\kappa$ is an involution, then for all $I\in\JJJ(P)$,
\[
\rho^{-1}(I')=(\rho(I))'.
\]
\label{prime rho rel}
\end{lemma}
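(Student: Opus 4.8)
The plan is to unfold both sides of the claimed identity into the three building-block bijections $c$, $\nabla$, $\Delta$ (together with $\kappa$), and then to collapse each side using the two commuting squares already contained in diagram~\eqref{comm diagram}. Writing $\rho=\Delta^{-1}\circ\nabla\circ c$, and hence $\rho^{-1}=c\circ\nabla^{-1}\circ\Delta$, and noting that $\Delta(I')=\kappa(\Delta(I))$ follows immediately from the definition $I'=\Delta^{-1}\circ\kappa\circ\Delta(I)$ (since $\Delta$ is a bijection $\JJJ(P)\to\AAA(P)$), I would first rewrite
\[
\rho^{-1}(I')=c\big(\nabla^{-1}(\kappa(\Delta(I)))\big)\qquad\text{and}\qquad (\rho(I))'=\Delta^{-1}\big(\kappa(\nabla(c(I)))\big),
\]
using $\Delta(\rho(I))=\nabla(c(I))$ for the second expression.

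The heart of the argument is the observation that the two squares of~\eqref{comm diagram} are exactly the relations $\kappa\circ\nabla=\Delta\circ\kappa$ on order filters and $\nabla^{-1}\circ\kappa=\kappa\circ\Delta^{-1}$ on antichains. Applying the second relation to the antichain $\Delta(I)=\max(I)$ gives $\nabla^{-1}(\kappa(\Delta(I)))=\kappa(\Delta^{-1}(\Delta(I)))=\kappa(I)$, so that $\rho^{-1}(I')=c(\kappa(I))$. Applying the first relation to the filter $c(I)$ gives $\kappa(\nabla(c(I)))=\Delta(\kappa(c(I)))$, so that $(\rho(I))'=\Delta^{-1}(\Delta(\kappa(c(I))))=\kappa(c(I))$. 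Finally, because $\kappa$ is a bijection of $P$ it commutes with complementation, $\kappa\circ c=c\circ\kappa$, so both sides equal the ideal complement $\overline{I}=\kappa\circ c(I)$, which is the claimed equality.

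I expect the only real obstacle to be conceptual rather than computational: the two sides apply $\kappa$ at different stages of the rowmotion factorization, so they do not visibly match, and the key move is to route each of them through the two elementary squares of~\eqref{comm diagram} until both reduce to the single object $\overline{I}$. The bookkeeping that needs care is the interplay between the two ``generation'' maps $\Delta^{-1}$ (downward closure) and $\nabla^{-1}$ (upward closure) together with the order-reversal of $\kappa$, which is precisely what the second square encodes; I would therefore record those two squares as an explicit lemma, valid for any order-reversing bijection and read off from~\eqref{comm diagram}. The involution hypothesis does not enter this bookkeeping; its role is rather to guarantee that the prime operation is itself an involution, since $I''=\Delta^{-1}\circ\kappa^2\circ\Delta(I)=I$, which is the natural setting in which the symmetric statement $\rho^{-1}(I')=(\rho(I))'$ is meaningful.
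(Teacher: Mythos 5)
Your proof is correct and follows essentially the same route as the paper's: both sides are unfolded into $c$, $\nabla$, $\Delta$, and $\kappa$, and then collapsed to $\overline{I}=\kappa\circ c(I)$ via the two commuting squares of diagram~\eqref{comm diagram}. Your explicit chase is slightly more detailed than the paper's (which instead invokes $\overline{\overline{I}}=I$ and hence the involution hypothesis), and your closing observation that the identity itself only needs $\kappa$ to be an order-reversing bijection is consistent with the computation.
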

\begin{proof}
Note that
\[
\rho^{-1}(I')= c\circ\nabla^{-1}\circ\kappa\circ\Delta(I),
\]
and 
\[
(\rho(I))'=\Delta^{-1}\circ\kappa\circ\nabla\circ c(I).
\]
Using~\eqref{comm diagram} and the fact $\overline{\overline{I}}=I$  since $\kappa$ and $c$ are commuting involutions, we see that
\[
\rho^{-1}(I')=\overline{I}=(\rho(I))',
\]
which completes the proof.
\end{proof}
For the statement of the next result, it will be helpful to define $f(\OOO)\coloneqq\sum_{I\in\OOO}f(I)$, where $\OOO$ is an orbit of rowmotion and $f$ is a statistic on ideals.
\begin{cor}
Let $P$ be a self-dual poset with $n=\# P$ and suppose there is an order-reversing involution $\kappa:P\to P$. Let $I\in\JJJ(P)$ and fix $\ell\in[n]$.
\begin{enumerate}
\item If $I,I'\in\OOO$ for some orbit $\OOO$, then
\[
(\ac{\ell}-\ac{\kappa({\ell})})(\OOO)=0.
\]
\item If $I\in\OOO$ and $I'\in\OOO'$ for some orbits $\OOO$ and $\OOO'$ with $\OOO\neq\OOO'$, then $\#\OOO=\#\OOO'$ and
\[
(\ac{\ell}-\ac{\kappa(\ell)})(\OOO)+(\ac{\ell}-\ac{\kappa(\ell)})(\OOO')=0.
\]
\end{enumerate}
\label{ac - kappa ac}
\end{cor}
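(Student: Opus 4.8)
The plan is to reduce both parts to a single pointwise identity relating $\ac{\ell}$ and $\ac{\kappa(\ell)}$ under the prime map, and then push that identity through orbit sums using the intertwining relation of Lemma~\ref{prime rho rel}.

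First I would establish the key pointwise identity
\[
\ac{\kappa(\ell)}(I') = \ac{\ell}(I) \qquad \text{for all } I\in\JJJ(P)\text{ and } \ell\in[n].
\]
This comes from unwinding $I' = \Delta^{-1}\circ\kappa\circ\Delta(I)$. Since $\Delta(I)=\max(I)$ and $\Delta^{-1}$ sends an antichain $A$ to the ideal $\langle A\rangle$, whose set of maximal elements is again $A$, we obtain $\max(I')=\kappa(\max(I))$. Therefore $\ac{\kappa(\ell)}(I')=\1(x_{\kappa(\ell)}\in\max(I'))=\1(x_{\kappa(\ell)}\in\kappa(\max(I)))$, and because $\kappa$ is an involution this equals $\1(x_\ell\in\max(I))=\ac{\ell}(I)$. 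The only thing to watch here is the identification of $\kappa$ acting on elements with its induced action on indices, so that $\kappa(x_\ell)=x_{\kappa(\ell)}$ and $\kappa(\kappa(\ell))=\ell$.

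Next I would record the structural consequence of Lemma~\ref{prime rho rel}. Writing $\phi(I)\coloneqq I'=\Delta^{-1}\circ\kappa\circ\Delta(I)$, the fact that $\kappa^2=\mathrm{id}$ makes $\phi$ an involution on $\JJJ(P)$, and Lemma~\ref{prime rho rel} reads $\phi\circ\rho=\rho^{-1}\circ\phi$. Iterating gives $\phi(\rho^k(I))=\rho^{-k}(I')$, so $\phi$ carries the orbit $\OOO$ of $I$ bijectively onto the orbit of $I'$; in particular $\phi$ maps orbits to orbits and preserves their cardinalities. For part (1), the hypothesis $I'\in\OOO$ says $\phi$ restricts to a bijection of $\OOO$ onto itself, so reindexing the left side of the key identity by $J=\phi(I)$ and summing over $I\in\OOO$ yields $\ac{\kappa(\ell)}(\OOO)=\ac{\ell}(\OOO)$, i.e. $(\ac{\ell}-\ac{\kappa(\ell)})(\OOO)=0$.

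For part (2), $\phi$ is a size-preserving bijection between $\OOO$ and $\OOO'$, which immediately gives $\#\OOO=\#\OOO'$. Summing the key identity over $\OOO$ and reindexing by $\phi$ gives $\ac{\kappa(\ell)}(\OOO')=\ac{\ell}(\OOO)$, and summing over $\OOO'$ similarly gives $\ac{\kappa(\ell)}(\OOO)=\ac{\ell}(\OOO')$. Adding the two orbit totals and substituting these two relations cancels all four terms, producing the claimed vanishing sum. The only genuinely delicate step is the first one---verifying $\max(I')=\kappa(\max(I))$ and matching $\kappa$'s action on $\ell$ with its action on elements; after that the argument is pure bijection bookkeeping that is automatic once $\phi$ is known to intertwine $\rho$ with $\rho^{-1}$.
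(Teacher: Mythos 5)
Your proposal is correct and follows essentially the same route as the paper: both arguments rest on the pointwise identity $\ac{\kappa(\ell)}(I')=\ac{\ell}(I)$ (equivalently, $\max(I')=\kappa(\max(I))$) together with Lemma~\ref{prime rho rel} showing that $I\mapsto I'$ is an involution carrying the orbit of $I$ onto the orbit of $I'$. The paper phrases the conclusion by partitioning the orbit(s) into pairs $\{I,I'\}$ and singletons on which the statistic vanishes, while you reindex the orbit sums by the involution; these are the same argument in different packaging.
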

\begin{proof}
(1) Let $I,I'\in \OOO$, then observe that for some $J=\rho^j(I)\in\OOO$, we have by Lemma~\ref{prime rho rel}
\[
J'=(\rho^j(I))'=\rho^{-j}(I')\in\OOO.
\]
That is, $\OOO$ can be partitioned into pairs of ideals $\{I,I'\}$ with $I\neq I'$ and singletons $\{I\}$ with $I=I'$. Furthermore, since $\ell\in \max(I)$ if and only if $\kappa(\ell)\in \max(I')$, the value of $\ac{\ell}-\ac{\kappa(\ell)}$ is 0 on each of the pairs and singletons and hence 0 on the entire orbit.

(2) Let $\OOO=\{I_1,\dots,I_m\}$. Similar reasoning as in the proof of (1) shows that $\OOO'=\{I_1',\dots,I_m'\}$. The value of $\ac{\ell}-\ac{\kappa(\ell)}$ is 0 on each of the pairs $\{I_i,I'_i\}$, which implies the result.
\end{proof}

Let $P$ be self-dual with an order-reversing involution $\kappa:P\to P$. Note that self-dual fences have such a map but there exist self-dual posets for which none of the order-reversing bijections are involutions~\cite[Exercise 8, Chapter 3]{stan:ecv1}. Consider the group generated by the action of $\rho$ and the map $I\mapsto I'$, and call the orbits of this action \emph{dihedral group orbits}. By the proof of Corollary~\ref{ac - kappa ac}, all orbits will be either a rowmotion orbit or a union of two rowmotion orbits. We have the following homomesy result as a consequence.
\begin{theorem}
Let $P$ be self-dual with $n=\# F$ and an order-reversing involution $\kappa: P\to P$, and let $\ell\in[n]$. Then $\ac{\ell}-\ac{\kappa(\ell)}$ is 0-mesic on dihedral group orbits.
\end{theorem}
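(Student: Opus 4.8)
The plan is to reduce everything to Corollary~\ref{ac - kappa ac}, which already does all the real work, together with the structural dichotomy for dihedral group orbits noted just before the theorem statement. The key reformulation is that $\ac{\ell}-\ac{\kappa(\ell)}$ being $0$-mesic on dihedral group orbits is equivalent to the vanishing of the orbit-sum $(\ac{\ell}-\ac{\kappa(\ell)})(\mathcal{D})=\sum_{I\in\mathcal{D}}(\ac{\ell}-\ac{\kappa(\ell)})(I)$ for every dihedral group orbit $\mathcal{D}$, since the average over $\mathcal{D}$ is this sum divided by $\#\mathcal{D}$.

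First I would recall that, as observed following Corollary~\ref{ac - kappa ac}, every dihedral group orbit $\mathcal{D}$ is either a single rowmotion orbit or a disjoint union of exactly two rowmotion orbits. This follows from Lemma~\ref{prime rho rel}: the map $I\mapsto I'$ conjugates $\rho$ to $\rho^{-1}$, so the group generated by $\rho$ and $I\mapsto I'$ acts dihedrally, and its orbits decompose into the rowmotion orbits meeting them. Then I would split into the two cases. If $\mathcal{D}=\OOO$ is a single rowmotion orbit, then for any $I\in\OOO$ we also have $I'\in\OOO$, so Corollary~\ref{ac - kappa ac}(1) gives $(\ac{\ell}-\ac{\kappa(\ell)})(\OOO)=0$ directly. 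If instead $\mathcal{D}=\OOO\sqcup\OOO'$ is a union of two distinct rowmotion orbits, with some $I\in\OOO$ satisfying $I'\in\OOO'$, then Corollary~\ref{ac - kappa ac}(2) gives $(\ac{\ell}-\ac{\kappa(\ell)})(\OOO)+(\ac{\ell}-\ac{\kappa(\ell)})(\OOO')=0$, which is exactly the orbit-sum over $\mathcal{D}$.

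In both cases the summed statistic over $\mathcal{D}$ vanishes, so dividing by $\#\mathcal{D}$ shows the average is $0$, completing the argument.

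Honestly, there is no substantial obstacle here: the content is already packaged in Corollary~\ref{ac - kappa ac}, and the theorem merely repackages its two parts as a single mesic statement over the coarser dihedral orbits. The only point deserving a line of care is confirming that the dichotomy is exhaustive and that the two cases align with the two parts of the corollary — that is, that within a single rowmotion orbit the pairing $I\leftrightarrow I'$ stays inside the orbit (matching part~(1)), while across two orbits it sends one to the other bijectively (matching part~(2)). Both facts were established in the proof of Corollary~\ref{ac - kappa ac} via the involutivity of $I\mapsto I'$ on each rowmotion orbit, so no new verification is required.
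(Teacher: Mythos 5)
Your proposal is correct and matches the paper's intended argument exactly: the paper derives this theorem directly from Corollary~\ref{ac - kappa ac} together with the observation (via Lemma~\ref{prime rho rel}) that each dihedral group orbit is either a single rowmotion orbit or a union of two, applying part (1) or part (2) of the corollary in the respective case. No gaps; the reduction of the mesic statement to the vanishing of the orbit-sum is the same bookkeeping the paper leaves implicit.
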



\section{Homomesies for Piecewise-linear and Birational Rowmotion} \label{sec:pl}

Earlier in the paper, we alluded to the fact that proving a statistic is in the toggleability space for (combinatorial) rowmotion, which we consider in this paper, yields results for both piecewise-linear and birational rowmotion. We briefly describe these maps and the process for lifting statistics to these realms here.

In~\cite{eins:cpla}, Einstein and Propp introduced piecewise-linear rowmotion through a composition of toggles. Let $\R^P$ denote the set of functions $\pi:P\to\R$. Let $\widehat{P}$ denote the poset obtained from $P$ by adding a minimal element $\widehat{0}$ and maximal element $\widehat{1}$, and fix parameters $\alpha^\pl$, $\omega^\pl\in\R$. We also view any $\pi\in\R^P$ as a function $\pi:\widehat{P}\to\R$ via the conventions that $\pi(\wh{0})=\alpha^\pl$ and $\pi(\wh{1})=\omega^\pl$. To denote that an element $x\in P$ is covered by $y\in P$, we write $x\lessdot y$. For $\pi\in\R^P$ and $p\in P$, define the \emph{piecewise-linear toggle} $\tau_p^{\text{PL}}$ to be
\[
\tau_p^{\pl}(\pi)(p')\coloneqq\begin{cases}
\pi(p')&\text{if }p\neq p',\\
\min\{\pi(r):p\lessdot r\in \widehat{P}\}+\max\{\pi(r):p\gtrdot r\in \widehat{P}\}-\pi(p)&\text{if }p=p'.
\end{cases}
\]
Note that the toggles $\tau_p^\pl$ and $\tau_{p'}^\pl$ commute if $p$ and $p'$ do not share a covering relation. As with combinatorial rowmotion, we define \emph{piecewise-linear rowmotion} $\rho^{\text{PL}}:\R^P\to\R^P$ as
\[
\rho^{\text{PL}}\coloneqq\tau_{p_1}^{\text{PL}}\circ\cdots\circ\tau_{p_n}^{\text{PL}},
\]
where $p_1,\dots,p_n$ is any linear extension of $P$. 

It should be noted that when given $I\in \JJJ(P)$, if we take $\pi_I\in\R^P$, where $\pi_I$ is the indicator function of the complement of $I$, and set $\alpha^\pl=0$ and $\omega^\pl=1$, piecewise-linear toggling of $\pi_I$ corresponds to combinatorial toggling of $I$. Therefore, any results that hold for piecewise-linear rowmotion also hold for combinatorial rowmotion, and the process of going from piecewise-linear to combinatorial rowmotion is called \emph{specialization}.

Einstein and Propp~\cite{eins:cpla} also introduced a further generalization of rowmotion called birational rowmotion via ``detropicalizing" the piecewise-linear expressions by replacing $+$ with $\times$ and $\max$ with $+$. Let $\R^P_{>0}$ denote the set of functions $\pi:P\to\R_{>0}$. View any $\pi\in\R^P_{>0}$ as a function $\pi:\widehat{P}\to\R_{>0}$ via the convention that $\pi(\widehat{0})=\alpha^\bi$ and $\pi(\widehat{1})=\omega^\bi$. For $p\in P$, define the \emph{birational toggle} $\tau_p^{\text{B}}:\R^P_{>0}\to\R^P_{>0}$ by 
\[
\tau_p^{\text{B}}(\pi)(p')\coloneqq\begin{cases}
\pi(p')&\text{if }p\neq p',\\
\displaystyle{\frac{\displaystyle\sum_{p\gtrdot r\in\widehat{P}}\pi(r)}{\pi(p)\cdot\displaystyle\sum_{p\lessdot r\in\widehat{P}}\pi(r)^{-1}}}&\text{if }p=p',
\end{cases}
\]
and, as before, $\tau_p^{\text{B}}$ and $\tau_{p'}^{\text{B}}$ commute if $p$ and $p'$ do not share a covering relation.
We define \emph{birational rowmotion} $\rho^{\text{B}}:\R^P_{>0}\to\R^P_{>0}$ by
\[
\rho^{\text{B}}\coloneqq \tau_{p_1}^{\text{B}}\circ\cdots\circ\tau_{p_n}^{\text{B}},
\]
where $p_1,\dots,p_n$ is any linear extension of $P$. Any results found in the birational realm give results in the piecewise-linear realm by replacing $+$ with $\max$ and $\times$ with $+$ everywhere in expressions in a process known as \emph{tropicalization}.

The surprising fact is that though both piecewise-linear and birational rowmotion can induce infinite orbits even on finite posets, there are cases when the order is finite (see, for example,~\cite{grin:ipob2} and~\cite{grin:ipob1}).

Fence posets do not seem to be one of the families which enjoy finite order under these generalizations of rowmotion in general. In fact, the only fence we were able to find with finite birational order through computer experimentation was the fence $F=\breve{F}(a,a)$ for $a\geq 2.$ This case can be explained through a couple of results in the literature. Call a poset $P$ \emph{graded} if all chains from a minimal element of $P$ to a maximal element of $P$ have the same length. In~\cite{grin:ipob2}, Grinberg and Roby consider \emph{skeletal} posets, which are inductively defined and include graded rooted forests (of which $\breve{F}(a,a)$ is an example). For skeletal posets, they prove that the order of birational rowmotion coincides with the order of combinatorial rowmotion. The order of combinatorial rowmotion on the fence $F=\breve{F}(a,a)$ was proven in~\cite{eliz:rof} to be $a(a+1)$, and so we have the following result.
\begin{theorem}
Let $\alpha=(a,a)$, $a\geq 2$, and let $F=\breve{F}(\alpha)$ be the corresponding fence. The order of $\rho^{B}$ on $F$ is $a(a+1)$.
\end{theorem}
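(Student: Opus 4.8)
The plan is to assemble the two external results cited in the paragraph preceding the statement, the only genuine content being a structural observation that places $\breve{F}(a,a)$ inside the class of posets for which birational and combinatorial rowmotion have the same order.

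First I would verify that $F=\breve{F}(a,a)$ is a graded rooted forest, so that the machinery of Grinberg and Roby~\cite{grin:ipob2} applies. The fence $\breve{F}(a,a)$ has $2a-1$ elements and consists of a single up-segment $x_1\ltri\cdots\ltri x_a$ followed by a single down-segment $x_a\rtri x_{a+1}\rtri\cdots\rtri x_{2a-1}$; thus it has a unique peak $x_a$ and no valley. Every element other than $x_a$ is covered by exactly one element (each $x_k$ with $k<a$ is covered by $x_{k+1}$, and each $x_k$ with $k>a$ is covered by $x_{k-1}$), so the Hasse diagram is a tree rooted at the maximal element $x_a$; hence $F$ is a rooted forest, in fact a rooted tree. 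It is graded because both maximal chains from a minimal element to $x_a$, namely $x_1\ltri\cdots\ltri x_a$ and $x_{2a-1}\ltri\cdots\ltri x_a$, have the same length $a-1$. Graded rooted forests lie in the skeletal class of~\cite{grin:ipob2}.

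Next I would invoke the theorem of~\cite{grin:ipob2} asserting that for a skeletal poset the order of birational rowmotion $\rho^{\bi}$ coincides with the order of combinatorial rowmotion $\rho$. This reduces the computation entirely to the combinatorial order, which was determined in~\cite{eliz:rof} to be exactly $a(a+1)$ for $\breve{F}(a,a)$. Combining these two facts yields that the order of $\rho^{\bi}$ on $F$ is $a(a+1)$, as claimed.

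The argument carries no intrinsic difficulty of its own, since all the quantitative work is done by the two cited theorems. The only point requiring care is matching our fence to the hypotheses of~\cite{grin:ipob2}: one must confirm the grading condition (which hinges on $\alpha_1=\alpha_2$, forcing the two chains to the peak to be equally long) and reconcile the orientation convention used there for rooted forests (root at the top versus at the bottom) with the ``$\Lambda$-shaped'' orientation of our fence. This is bookkeeping rather than a substantive obstacle, and it is precisely the step where the restriction to $\alpha=(a,a)$ is essential, since a fence with a valley or with more than two segments is no longer a rooted forest.
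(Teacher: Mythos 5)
Your proposal is correct and follows essentially the same route as the paper: identify $\breve{F}(a,a)$ as a graded rooted forest, hence a skeletal poset in the sense of Grinberg--Roby, so that the order of birational rowmotion equals the order of combinatorial rowmotion, which is $a(a+1)$ by Elizalde et al. The extra verification you supply (that the Hasse diagram is a tree rooted at the unique peak and that the two maximal chains have equal length, which is where $\alpha_1=\alpha_2$ is used) is exactly the bookkeeping the paper leaves implicit.
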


In addition to the order, we can say a bit more about homomesies in the piecewise-linear and birational realms as well for the statistics in $A_T(F)$ and $I_T(F)$. We note that our original definition of homomesy needs to be altered since $\R^P$ and $\R_{>0}^P$ are not finite sets. We thus consider the limits of the time averages (arithmetic and geometric means). More explicitly, a statistic $f:\R^P\to\R$ is said to be homomesic under piecewise-linear rowmotion if, for every $\pi\in\R^P$, 
\[
\lim_{n\to\infty}\frac{1}{n}\sum_{i=0}^{n-1}f((\rho^{\text{PL}})^i(\pi))=c,
\]
for some $c\in R$. In this case, $f$ is said to be $c$-mesic. Analogously, a statistic $f:\R_{>0}^P\to\R$ is said to be multiplicatively homomesic under birational rowmotion if, for every $\pi\in\R_{>0}^P$,
\[
\lim_{n\to\infty}\left(\prod_{i=0}^{n-1} f((\rho^{\text{B}})^i(\pi))\right)^{\frac{1}{n}}=c,
\]
for some constant $c$. The statistic $f$ is said to be multiplicatively $c$-mesic in this case.

We now describe how to lift toggleability statistics to these realms and show that they are indeed still homomesic. Much of what follows was shown in~\cite{hopk:mdtc} and~\cite{defa:hvts}, so we omit some details. We begin with the piecewise-linear toggleability statistics. Let $P$ be a finite poset. For $p\in P$, define $T_p^{+,\pl},T_p^{-,\pl},T_p^{\pl}:\R^P\to\R$ by
\begin{align*}
T_p^{+,\pl}(\pi)&\coloneqq\pi(p)-\max\{\pi(r):p\gtrdot r\in\widehat{P}\},\\
T_p^{-,\pl}(\pi)&\coloneqq\min(\pi(r):p\lessdot r\in\widehat{P}\}-\pi(p), \\
T_p^{\pl}(\pi)&\coloneqq T_p^{+,\pl}(\pi)-T_p^{-,\pl}(\pi).
\end{align*}
Note that $T_p^{\pl}$ specializes to $T_p$. 
\begin{lemma}[\cite{hopk:mdtc}, Lemma 4.31 and Remark 4.33]
For any $\pi\in\R^P$ and $p\in P$, $T_p^{\textup{PL}}$ is 0-mesic, i.e.,
\[
\lim_{n\to\infty}\frac{1}{n}\sum_{i=0}^{n-1}T_p^{\textup{PL}}((\rho^{\textup{PL}})^i(\pi))=0.
\]
\label{pl togg 0mesic}
\end{lemma}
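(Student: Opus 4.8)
The plan is to lift the combinatorial slogan ``every element is toggled in exactly as often as it is toggled out'' to an exact telescoping identity relating the ``in'' and ``out'' parts $T_p^{+,\pl}$ and $T_p^{-,\pl}$ across one application of $\rho^{\pl}$. Concretely, I would first establish the pointwise identity
\[
T_p^{+,\pl}(\pi)=T_p^{-,\pl}\big(\rho^{\pl}(\pi)\big)\qquad\text{for all }\pi\in\R^P,
\]
and then sum $T_p^{\pl}=T_p^{+,\pl}-T_p^{-,\pl}$ along the orbit so that all interior terms cancel, leaving a single boundary term that the time average annihilates.

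The key step is the identity above, which I would prove by following the one coordinate $\pi(p)$ through a single rowmotion sweep. Fix a linear extension $p_1,\dots,p_n$ and write $\rho^{\pl}=\tau_{p_1}^{\pl}\circ\cdots\circ\tau_{p_n}^{\pl}$, applied from right to left. Since each element is toggled exactly once per sweep and a toggle only alters the value at its own element, the value at $p$ is modified precisely once, when $\tau_p^{\pl}$ is applied; moreover, once a neighbor of $p$ has been toggled it keeps its final value for the rest of the sweep. At the instant $\tau_p^{\pl}$ acts, the covers of $p$ (the $r$ with $p\lessdot r$) sit higher in the linear extension and have therefore already been toggled, so they carry their new values $\rho^{\pl}(\pi)(r)$, whereas the elements covered by $p$ (the $r$ with $p\gtrdot r$) sit lower and are untouched, so they still carry $\pi(r)$; the adjoined $\widehat 0,\widehat 1$ keep their fixed values throughout. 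Reading off the toggle formula for $\tau_p^{\pl}$ at that moment gives
\[
\rho^{\pl}(\pi)(p)=\min\{\rho^{\pl}(\pi)(r):p\lessdot r\in\widehat P\}+\max\{\pi(r):p\gtrdot r\in\widehat P\}-\pi(p),
\]
and rearranging this single equation is precisely $T_p^{+,\pl}(\pi)=T_p^{-,\pl}(\rho^{\pl}(\pi))$.

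With the identity in hand, the time average telescopes: substituting $T_p^{+,\pl}((\rho^{\pl})^i\pi)=T_p^{-,\pl}((\rho^{\pl})^{i+1}\pi)$ into the defining sum collapses it to
\[
\sum_{i=0}^{n-1}T_p^{\pl}\big((\rho^{\pl})^i(\pi)\big)=T_p^{-,\pl}\big((\rho^{\pl})^n(\pi)\big)-T_p^{-,\pl}(\pi),
\]
so dividing by $n$ reduces the claim to showing that $T_p^{-,\pl}((\rho^{\pl})^n(\pi))$ grows sublinearly in $n$. This is the main obstacle, and it is not formal: $\R^P$ is not compact and $\rho^{\pl}$ can have infinite orbits on fences, so one cannot simply quote finiteness. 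I would settle it by invoking boundedness of piecewise-linear rowmotion orbits, a structural fact about $\rho^{\pl}$ developed in the cited works of Einstein--Propp and Hopkins (and consistent with the observation that each PL toggle, being built from $\min$, $\max$, and a reflection in a single coordinate, is $\ell^\infty$-nonexpansive). Once the orbit is known to be bounded, $T_p^{-,\pl}$ — a difference of two coordinate values — is bounded along it, the boundary term is $O(1)$, and the limit is therefore $0$.
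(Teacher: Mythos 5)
Your proposal is correct and is essentially the argument this paper uses: the lemma itself is only quoted from Hopkins, but the paper's proof of the birational analogue (Lemma~\ref{bi togg 1mesic}) follows exactly your outline --- the transfer identity $T_p^{+}(\pi)=T_p^{-}(\rho(\pi))$, telescoping along the orbit, and then boundedness of the surviving boundary term, with boundedness supplied by the conserved quantity $\gamma(\pi)=\sum_{p\lessdot r}\pi(p)/\pi(r)$ in Lemma~\ref{bounds on labels} (whose tropicalization, $\max_{p\lessdot r}(\pi(p)-\pi(r))$ being preserved by $\rho^{\pl}$, is what you need in the piecewise-linear setting). Your derivation of $T_p^{+,\pl}(\pi)=T_p^{-,\pl}(\rho^{\pl}(\pi))$ by tracking the single coordinate through the top-to-bottom sweep is sound, as is the telescoping.

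One small caution: the parenthetical claim that each PL toggle is $\ell^\infty$-nonexpansive is false --- perturbing $\pi$ by $+\epsilon$ at a cover of $p$, $+\epsilon$ at an element covered by $p$, and $-\epsilon$ at $p$ changes $\tau_p^{\pl}(\pi)(p)$ by $3\epsilon$. This does not damage your proof, since you rely on it only as a consistency check and the actual boundedness of orbits comes from the conservation-law argument just described, but it should not be left in as a justification.
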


For the birational case, we define $T_p^{+,\bi},T_p^{-,\bi},T_p^{\bi}:\R^P_{>0}\to\R_{>0}$ by
\begin{align*}
T_p^{+,\bi}(\pi)&\coloneqq\frac{\pi(p)}{\sum_{p\gtrdot r\in\widehat{P}}\pi(r)},\\
T_p^{-,\bi}(\pi)&\coloneqq\frac{1}{\pi(p)\cdot\sum_{p\lessdot r\in\widehat{P}}\pi(r)^{-1}},\\
T_p^{\bi}(\pi)&\coloneqq T_p^{+,\bi}(\pi)/T_p^{-,\bi}(\pi).
\end{align*}

In~\cite{hopk:mdtc}, Hopkins shows that $T_p^\bi$ is 1-mesic for posets that have only finite orbits under birational rowmotion. In~\cite[Remark 4.4]{defa:hvts}, it is stated that one can extend this result to infinite orbits, but a proof is not written anywhere, so we give one here. For this, we need to establish bounds for $T_p^{+,\bi}$. The first observation is trivial, but useful: $\rho^\bi(\pi)(\wh{0})=\pi(\wh{0})$ and $\rho^\bi(\pi)(\wh{1})=\pi(\wh{1})$ for any $\pi\in \R_{>0}^P$. This is easily seen since no toggle changes the labels at $\wh{0}$ or $\wh{1}$~\cite{grin:ipob1}. An additional observation is that the sum
\[
\gamma(\pi)\coloneqq\sum_{p,r\in\wh{P},~p\lessdot r}\frac{\pi(p)}{\pi(r)}
\]
is preserved under birational rowmotion~\cite{eins:cpla,grin:broa}.

The proof of the following lemma, which uses the above fact, is due to Darij Grinberg.
\begin{lemma}
Let $\pi\in\R_{>0}^P$ and $p\in P$. Then there exist $C_1, C_2\in\R_{>0}$ such that for $n\geq 0$,
\[
C_1\leq(\rho^\textup{\bi})^n(\pi)(p)\leq C_2.
\]
\label{bounds on labels}
\end{lemma}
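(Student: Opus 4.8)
The plan is to exploit the two conservation laws recalled immediately before the statement: the boundary values $\pi(\wh{0})=\alpha^\bi$ and $\pi(\wh{1})=\omega^\bi$ are fixed by $\rho^\bi$, and the quantity $\gamma$ is invariant under $\rho^\bi$. Write $\pi_n\coloneqq(\rho^\bi)^n(\pi)$ and $\Gamma\coloneqq\gamma(\pi)$, so that $\gamma(\pi_n)=\Gamma$, $\pi_n(\wh{0})=\alpha^\bi$, and $\pi_n(\wh{1})=\omega^\bi$ for every $n\geq 0$.

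First I would observe that every covering ratio is controlled by $\Gamma$. Since $\gamma(\pi_n)$ is a sum of \emph{positive} terms, and $\pi_n(a)/\pi_n(b)$ is exactly one of these summands whenever $a\lessdot b$ in $\wh{P}$, we obtain
\[
\frac{\pi_n(a)}{\pi_n(b)}\leq\Gamma\qquad\text{for every covering relation }a\lessdot b\text{ in }\wh{P},
\]
uniformly in $n$. This single estimate is the heart of the argument.

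Then I would telescope along saturated chains anchored at $\wh{0}$ and $\wh{1}$. Fix $p\in P$ and choose a saturated chain $p=q_0\lessdot q_1\lessdot\cdots\lessdot q_k=\wh{1}$, which exists because $\wh{1}$ is the maximum of $\wh{P}$. Telescoping the product of ratios and using $\pi_n(\wh{1})=\omega^\bi$ gives
\[
\pi_n(p)=\omega^\bi\prod_{i=0}^{k-1}\frac{\pi_n(q_i)}{\pi_n(q_{i+1})}\leq\omega^\bi\,\Gamma^{k},
\]
so $C_2\coloneqq\omega^\bi\Gamma^{k}$ serves as the upper bound. Symmetrically, choosing a saturated chain $\wh{0}=r_0\lessdot r_1\lessdot\cdots\lessdot r_m=p$ and noting that each factor $\pi_n(r_{i+1})/\pi_n(r_i)$ is the reciprocal of a covering ratio, hence at least $\Gamma^{-1}$, yields
\[
\pi_n(p)=\alpha^\bi\prod_{i=0}^{m-1}\frac{\pi_n(r_{i+1})}{\pi_n(r_i)}\geq\alpha^\bi\,\Gamma^{-m},
\]
so $C_1\coloneqq\alpha^\bi\Gamma^{-m}$ works. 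Both constants are positive and depend only on $p$, $\alpha^\bi$, $\omega^\bi$, and $\Gamma=\gamma(\pi)$, not on $n$.

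As for difficulty, there is no serious obstacle once the invariance of $\gamma$ is available: the whole proof reduces to the elementary fact that a conserved sum of positive terms bounds each of its individual summands, followed by routine telescoping. The only point requiring care is the bookkeeping of the direction of the covering ratios (lower element over upper element), so that the telescoping products land on the correct side for the upper and lower bounds, with the fixed boundary values $\alpha^\bi$ and $\omega^\bi$ supplying the multiplicative anchors at the two ends of each chain.
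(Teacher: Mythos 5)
Your proposal is correct and follows essentially the same route as the paper's proof: both use the fixed boundary values at $\wh{0}$ and $\wh{1}$ together with the invariance of $\gamma$ to bound every covering ratio by $\gamma(\pi)$ uniformly in $n$, and then telescope along saturated chains to $\wh{1}$ (for the upper bound) and to $\wh{0}$ (for the lower bound). The only cosmetic difference is that the paper replaces $\omega^\bi\gamma(\pi)^{k}$ by $\max\{\omega^\bi,\omega^\bi\gamma(\pi)^{M}\}$ with $M$ the longest chain length, whereas you keep the exponent tied to the chosen chain; both yield constants independent of $n$.
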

\begin{proof}
Note that each $\frac{\pi(p)}{\pi(r)}$, where $p,r\in \wh{P}$ with $p\lessdot r$, is bounded from above by $\gamma(\pi)$. Since $\gamma(\pi)$ is preserved under birational rowmotion, we have that $\gamma((\rho^\bi)^n(\pi)) = \gamma(\pi)$. Thus, 
\begin{equation}
\frac{(\rho^\bi)^n(\pi)(p)}{(\rho^\bi)^n(\pi)(r)} \leq \gamma(\pi)
\label{upper bound}
\end{equation}
for all $p,r\in\wh{P}$ with $p\lessdot r$ and any $n\in\N$. Then for $p \in P$, applying~\eqref{upper bound} along a saturated chain $p \lessdot r \lessdot \dots \lessdot \wh 1$ of length $k$, we get \[(\rho^\bi)^n(\pi)(p)\leq (\rho^\bi)^n(\pi)(r)\gamma(\pi)\leq\cdots\leq \omega^\bi\gamma(\pi)^{k} \leq \max\{ \omega^\bi, \omega^\bi\gamma(\pi)^{M}\},\] where $M$ is the length of the longest chain in $P$. Similarly, applying~\eqref{upper bound} on a saturated chain $p \gtrdot r \gtrdot \dots \gtrdot \wh 0$, one obtains a lower bound on $(\rho^\bi)^n(\pi)(p)$ independent of $n$.
\end{proof}

It is not hard to show that Lemma~\ref{bounds on labels} yields nonzero upper and lower bounds on $T_p^{+,\bi}((\rho^\bi)^n(\pi))$ which are independent of $n$. Additionally, we have that for any $\pi\in\R_{>0}^P$ and $p\in P$, $T_p^{+,\bi}(\pi)=T_p^{-,\bi}(\rho^\bi(\pi))$~\cite[Proof of Lemma 4.42]{hopk:mdtc}. These observations allow us to prove the multiplicative homomesy for the birational toggleability statistics.

\begin{lemma}
For any $\pi\in\R_{>0}^P$ and $p\in P$, $T_p^\textup{\bi}$ is multiplicatively 1-mesic.
\label{bi togg 1mesic}
\end{lemma}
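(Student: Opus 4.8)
The plan is to prove the multiplicative 1-mesic property by telescoping the quotients $T_p^{+,\bi}/T_p^{-,\bi}$ over an orbit, using the shift relation $T_p^{+,\bi}(\pi) = T_p^{-,\bi}(\rho^\bi(\pi))$ to collapse the product, and then controlling the leftover boundary terms with the uniform bounds from Lemma~\ref{bounds on labels}. First I would write out the geometric mean of $T_p^\bi$ over the first $n$ iterates as
\[
\left(\prod_{i=0}^{n-1} T_p^\bi((\rho^\bi)^i(\pi))\right)^{1/n} = \left(\prod_{i=0}^{n-1} \frac{T_p^{+,\bi}((\rho^\bi)^i(\pi))}{T_p^{-,\bi}((\rho^\bi)^i(\pi))}\right)^{1/n}.
\]
Using $T_p^{-,\bi}((\rho^\bi)^i(\pi)) = T_p^{+,\bi}((\rho^\bi)^{i-1}(\pi))$ for $i\geq 1$, the numerators and denominators cancel in a telescoping fashion, leaving only the numerator from the last term and the denominator from the first term. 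This reduces the product to
\[
\frac{T_p^{+,\bi}((\rho^\bi)^{n-1}(\pi))}{T_p^{-,\bi}(\pi)},
\]
so the whole expression becomes $\bigl(T_p^{+,\bi}((\rho^\bi)^{n-1}(\pi))/T_p^{-,\bi}(\pi)\bigr)^{1/n}$.

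Next I would observe that the denominator $T_p^{-,\bi}(\pi)$ is a fixed positive constant independent of $n$, so its $n$-th root tends to $1$ as $n\to\infty$. The remaining task is to show that $\bigl(T_p^{+,\bi}((\rho^\bi)^{n-1}(\pi))\bigr)^{1/n}\to 1$. Here I would invoke Lemma~\ref{bounds on labels}: since each label $(\rho^\bi)^{n-1}(\pi)(r)$ lies in a fixed interval $[C_1,C_2]$ independent of $n$, and $T_p^{+,\bi}(\pi') = \pi'(p)/\sum_{p\gtrdot r}\pi'(r)$ is a ratio of positive quantities each bounded above and below by positive constants, it follows that $T_p^{+,\bi}((\rho^\bi)^{n-1}(\pi))$ itself is trapped in a fixed interval $[D_1,D_2]\subset\R_{>0}$ for all $n$. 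Taking $n$-th roots, $D_1^{1/n}\to 1$ and $D_2^{1/n}\to 1$, so by the squeeze the factor tends to $1$.

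Combining these two observations, the limit of the geometric mean is $1\cdot 1 = 1$, which is exactly the assertion that $T_p^\bi$ is multiplicatively 1-mesic. The main obstacle—already isolated and handled by the preceding lemma—is establishing that the orbit labels, and hence $T_p^{+,\bi}$, stay within constants independent of $n$; without that uniformity the $n$-th root of a potentially unbounded last term could fail to converge to $1$. I would emphasize that this is precisely where the infinite-orbit case genuinely differs from the finite-orbit argument in~\cite{hopk:mdtc}, and it is the reason Lemma~\ref{bounds on labels} (and the conserved quantity $\gamma$) is needed.
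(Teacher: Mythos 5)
Your proposal is correct and follows essentially the same route as the paper's proof: telescoping the product via the identity $T_p^{+,\bi}(\pi)=T_p^{-,\bi}(\rho^\bi(\pi))$ to reduce the geometric mean to $\bigl(T_p^{+,\bi}((\rho^\bi)^{n-1}(\pi))/T_p^{-,\bi}(\pi)\bigr)^{1/n}$, and then using the uniform bounds from Lemma~\ref{bounds on labels} to conclude the $n$-th root tends to $1$. Your write-up is somewhat more explicit than the paper's about why the boundedness of $T_p^{+,\bi}$ along the orbit is the essential new ingredient for the infinite-orbit case, but the argument is the same.
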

\begin{proof}
Let $\pi\in\R_{>0}^P$ and $p\in P$. Since $T_p^{+,\bi}(\pi)=T_p^{-,\bi}(\rho^\bi)(\pi)$, we have that, for any $n\in\N$,
\[
\prod_{i=0}^{n-1}T_p^\bi(\pi)=\frac{T_p^{+,\bi}(\rho^\bi)^{n-1}(\pi)}{T_p^{-,\bi}(\pi)}
\]
due to cancellations in the numerator and denominator. Since $T_p^{+,\bi}(\rho^\bi)^{n-1}(\pi)$ is bounded by nonzero constants, we have 
\[
\lim_{n\to\infty}\left(\prod_{i=0}^{n-1}T_p^\bi(\pi)\right)^{\frac{1}{n}}=\lim_{n\to\infty}\left(\frac{T_p^{+,\bi}(\rho^\bi)^{n-1}(\pi)}{T_p^{-,\bi}(\pi)}\right)^{\frac{1}{n}}=1,
\]
as desired.
\end{proof}

Since every element in a fence covers and is covered by at most two elements, every statistic in either the order ideal or antichain toggleability space can be lifted  to statistics in the piecewise-linear and birational realms via corresponding linear combinations of lifted toggleability statistics and a constant. Moreover, these lifted statistics still enjoy homomesy in their respective realms. We give a couple of examples of this below, but first we need some definitions. Note that $\ac{p}=T_p^-$.

Let $p\in P$ and define
\begin{alignat*}{3}
&\oic{p}^{\text{PL}}(\pi)\coloneqq \omega^\pl-\pi(p), \qquad&\ac{p}^{\text{PL}}(\pi)&\coloneqq\min\{\pi(r):p\lessdot r\in\wh{P}\}-\pi(p),\\
&\oic{p}^{\text{B}}(\pi)\coloneqq\frac{\omega^\bi}{\pi(p)}, &\ac{p}^{\text{B}}(\pi)&\coloneqq\frac{1}{\pi(p)\cdot\sum_{p\lessdot r\in\wh{P}}\pi(r)^{-1}}.
\end{alignat*}
If a statistic $f:\JJJ(P)\to\R$ is of the form
\begin{equation}
f = \sum_{p\in P}(a_p\oic{p}+b_p\ac{p}),
\label{comb stat}\notag
\end{equation}
set 
\begin{equation}
f^\pl\coloneqq\sum_{p\in P}a_p\oic{p}^\pl+b_p\ac{p}^\pl 
\label{pl stat}\notag
\end{equation}
and
\begin{equation}
f^\bi\coloneqq\prod_{p\in P}(\oic{p}^\bi)^{a_p}\cdot(\ac{p}^\bi)^{b_p}.
\label{bi stat}\notag
\end{equation}

If $P$ is such that every element covers or is covered by at most two elements, by~\cite[Lemma 4.9]{defa:hvts}, $f^\pl$ and $f^\bi$ are well-defined.  Moreover, by~\cite[Theorem 4.11]{defa:hvts}, if $f\equiv c$, then $f^\pl\equiv^\pl c(\omega^\pl-\alpha^\pl)$ and $f^\bi\equiv^\bi\left(\frac{\omega^{\bi}}{\alpha^{\bi}}\right)^c$, where $\equiv^\pl$ and $\equiv^\bi$ are defined analogously to $\equiv$ but with lifted toggleability statistics (and division as opposed to subtraction in the birational case). In particular, by Lemmas~\ref{pl togg 0mesic} and~\ref{bi togg 1mesic}, this means that statistics in the spaces $I_T(F)$ and $A_T(F)$ of a fence $F$ automatically lift to statistics in the piecewise-linear and birational realms \emph{which are still homomesic.}

We end this section with a couple of examples of lifted statistics.

\begin{ex} 
Recall from Theorem~\ref{peak and valley anti} that for any $x\in\breve{S}_i$, if $p$ is the peak of $S_i$ and $v$ is the valley of $S_i$, we have
\[
\alpha_i\ac{x}+\ac{v}+\ac{p}\equiv 1.
\]
Then, for any $\pi\in\R^P$,
\[
\lim_{n\to\infty}\frac{1}{n}\sum_{i=0}^{n-1}\left(\alpha_i\ac{x}^{\pl}(\rho^\pl)^i(\pi)+\ac{v}^{\pl}(\rho^\pl)^i(\pi)+\ac{p}^{\pl}(\rho^\pl)^i(\pi)\right)=\omega^\pl-\alpha^\pl.
\]
In the birational case, for any $\pi\in\R_{>0}^P$, we have
\[
\lim_{n\to\infty}\left(\prod_{i=0}^{n-1}\ac{x}^{\bi}(\rho^\bi)^i(\pi)^{\alpha_i}\cdot\ac{v}^{\bi}(\rho^\bi)^i(\pi)\cdot\ac{p}^{\bi}(\rho^\bi)^i(\pi) \right)^{\frac{1}{n}}=\left(\frac{\omega^\bi}{\alpha^\bi}\right).
\]
\end{ex}

\begin{ex}
From Theorem~\ref{oic basis equiv const}, in the case where $S_i$ has no valley, we have that
\[
\alpha_i\oic{(i,j)}-j\oic{p}\equiv \alpha_i-j,
\]
where $p$ is the peak of $S_i$. Then, for any $\pi\in\R_{>0}^P$,
\[
\lim_{n\to\infty}\frac{1}{n}\sum_{i=0}^{n-1}\left(\alpha_i\oic{(i,j)}^{\pl}(\rho^\pl)^i(\pi)-j\oic{p}^{\pl}(\rho^\pl)^i(\pi)\right)=(\alpha_i-j)(\omega^\pl-\alpha^\pl).
\]
In the birational case, for any $\pi\in\R_{>0}^P$, we have
\[
\lim_{n\to\infty}\left(\prod_{i=0}^{n-1}\oic{(i,j)}^{\bi}(\rho^\bi)^i(\pi)^{\alpha_i}\cdot\oic{p}^{\bi}(\rho^\bi)^i(\pi)\right)^{\frac{1}{n}}=\left(\frac{\omega^\bi}{\alpha^\bi}\right)^{\alpha_i-j}.
\]
\end{ex}


\section{Discussion}
\label{sec:conclusion section}

\subsection{Homomesic subspaces} Recall that $I_H(F)$ and $A_H(F)$ are the subspaces of $\langle\oic{i}\rangle\coloneqq\Span_\R(\{\oic{i}\mid i\in[n]\})$ and $\langle\ac{i}\rangle\coloneqq\Span_\R(\{\ac{i}\mid i\in[n]\})$, where $n$ is the size of the fence $F$, which consist of homomesic statistics and contain the subspaces $I_T(F)$ and $A_T(F)$, respectively. Since our results show that $\dim(I_T(F))=\dim(A_T(F))$, it is a natural question to ask how these subspaces are different and whether we can describe the dimensions of $I_H(F)$ and $A_H(F)$. While we cannot give the complete answers to these questions for a general fence $F$, one can easily show that the two spaces have the same dimension.
\begin{theorem}
For any fence $F$, we have
\[
\dim(I_H(F))=\dim(A_H(F)).
\]
\label{homom equal dim}
\end{theorem}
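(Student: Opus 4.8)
The plan is to build an explicit linear isomorphism between $A_H(F)$ and $I_H(F)$ from the change-of-basis dictionary \eqref{ac to oic dict}--\eqref{oic to ac dict}. First I would record the setup: write $V_A=\Span_\R(\{\ac{x}\mid x\in F\})$ and $V_I=\Span_\R(\{\oic{x}\mid x\in F\})$, both of dimension $n=\#F$, and note that since an $\R$-linear combination of homomesic statistics is again homomesic, the homomesic statistics form a subspace $H$, with $A_H(F)=V_A\cap H$ and $I_H(F)=V_I\cap H$. Let $W$ be the span of the constant function $1$ together with all toggleability statistics $T_x$. Because each $T_x$ is $0$-mesic and constants are homomesic, $W\subseteq H$; this containment is the engine of the whole argument.

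Next I would use \eqref{ac to oic dict} to define a linear map $\Psi\colon V_A\to V_I$ by keeping only the order-ideal part of each expansion: $\Psi(\ac{x})$ equals $\oic{x}$ for a peak, $\oic{x}-\oic{y}$ (with $y$ the cover of $x$, read as $0$ when $x$ is a maximal unshared element) for an unshared $x$, and $-\oic{x}$ for a valley. As $\{\ac{x}\}$ is a basis, $\Psi$ is well defined, and by construction every $f=\sum_x b_x\ac{x}$ satisfies
\[
f-\Psi(f)=\sum_{x\text{ a valley}}b_x(1-T_x)\in W\subseteq H.
\]
Hence $f$ and $\Psi(f)$ differ by an element of $H$, so $f\in H$ if and only if $\Psi(f)\in H$.

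The crux---and the step I expect to be the main obstacle---is proving that $\Psi$ is a bijection $V_A\to V_I$. Here I would fix a linear extension $e_1,\dots,e_n$ of $F$ and order both bases by it. The only off-diagonal contribution to the matrix of $\Psi$ is the $-\oic{y}$ coming from an unshared $x$ with $x\lessdot y$; since $y$ then appears later in the extension, that entry lies strictly below the diagonal, while every diagonal entry is $\pm1$. So the matrix is lower triangular with determinant $\pm1$, giving invertibility. The fiddly points to get right are the degenerate case of a maximal unshared element (where the convention $\oic{y}=0$ must be reconciled with the identity $\ac{x}=\oic{x}$, valid because a maximal element lies in $\max(I)$ exactly when it lies in $I$) and confirming that an unshared element has a unique cover, which holds because the Hasse diagram of a fence is a path.

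Finally I would combine the two facts. Since $\Psi$ is bijective and preserves membership in $H$, it restricts to an isomorphism $A_H(F)=V_A\cap H\to V_I\cap H=I_H(F)$: it is injective and lands in $I_H(F)$, and it is onto because for $g\in I_H(F)$ the preimage $f=\Psi^{-1}(g)\in V_A$ satisfies $f=g+(f-\Psi(f))\in H$, so $f\in A_H(F)$. Therefore $\dim(A_H(F))=\dim(I_H(F))$, as claimed.
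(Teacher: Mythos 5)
Your proof is correct, and it takes a genuinely different (though closely related) route from the paper's. The paper applies the dimension formula $\dim(H\cap U)=\dim(H)+\dim(U)-\dim(H+U)$ with $U=\langle\oic{i}\rangle$ and $U=\langle\ac{i}\rangle$, where $H$ is the full space of homomesic statistics, and then uses the dictionary \eqref{ac to oic dict}--\eqref{oic to ac dict} to conclude $H+\langle\oic{i}\rangle=H+\langle\ac{i}\rangle$; equality of dimensions falls out by subtraction. You instead package the same dictionary as an explicit linear map $\Psi:\Span_\R(\{\ac{x}\})\to\Span_\R(\{\oic{x}\})$ with $f-\Psi(f)$ lying in the span of constants and the $T_x$, and prove $\Psi$ invertible by ordering both bases along a linear extension and observing lower-triangularity with $\pm1$ diagonal. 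The shared engine is the observation that the change of basis between antichain and order ideal indicators differs only by constants and single-element toggleability statistics, all homomesic; what your version buys is an explicit isomorphism $A_H(F)\to I_H(F)$ (strictly more than equality of dimensions) while needing only that each $T_x$ is $0$-mesic, rather than the characterization of $H$ via antichain toggleability statistics that the paper quotes (Theorem~\ref{antichain togg is 0-mesic}) --- though the paper's argument does not truly need that theorem either. The cost is the extra bookkeeping your approach requires, all of which you handle correctly: in a fence every unshared element has at most one cover, and for the maximal unshared element of the last segment (when $t$ is odd) the convention $\oic{y}=0$ is consistent with $\ac{x}=\oic{x}$.
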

\begin{proof}
Let $H$ be the space of all statistics on $F$ which are homomesic under rowmotion. By Theorem~\ref{antichain togg is 0-mesic}, $H$ is the span (over $\R$) of all antichain toggleability statistics and the constant functions. Then
\[
\dim(I_H(F))=\dim(H\cap\langle\oic{i}\rangle)=\dim(H)+\dim(\langle\oic{i}\rangle)-\dim(H+\langle\oic{i}\rangle)
\]
and
\[
\dim(A_H(F))=\dim(H\cap\langle\ac{i}\rangle)=\dim(H)+\dim(\langle\ac{i}\rangle)-\dim(H+\langle\ac{i}\rangle).
\]
Note that $\dim(\langle\oic{i}\rangle)=\dim(\langle\ac{i}\rangle) = n$, so it is sufficient to prove that $\dim(H+\langle\oic{i}\rangle)=\dim(H+\langle\ac{i}\rangle)$. Using~\eqref{ac to oic dict} and~\eqref{oic to ac dict}, and the fact that the toggleability statistics are homomesic, one can see that $H+\langle\oic{i}\rangle = H+\langle\ac{i}\rangle$, so the result follows.
\end{proof}

We note that a similar proof to the one above can be used to show $\dim(I_T(F))=\dim(A_T(F))$ since~\eqref{ac to oic dict} and~\eqref{oic to ac dict} only involve toggleability statistics but does not determine the dimension of the spaces, nor their bases.

For a fence $F$ with $t$ segments, we proved that $\dim(I_T(F))=\dim(A_T(F))=n-(t-1)$, which gives an upper bound on the codimensions of $I_H(F)$ and $A_H(F)$. Based on our computer experiments for fences with up to 7 segments, we conjecture that the codimension of the homomesic space can take a value within the full range $[0,t-1]$. Specifically,

\begin{conj}
Given $t\geq 2$ and $0\leq i\leq t-1$, there exists a fence $F$ with $t$ segments such that
\[
\dim(I_H(F))-\dim(I_T(F))=i.
\]
\label{homom all dims conj}
\end{conj}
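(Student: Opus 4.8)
The plan is to reduce the conjecture to a much cleaner statement on the antichain side, and then to construct explicit families of fences realizing each codimension. First I would transfer everything to the antichain indicator functions. Since $\dim(I_T(F)) = \dim(A_T(F)) = n-(t-1)$ and $\dim(I_H(F)) = \dim(A_H(F))$ (Theorems~\ref{ac togg dim},~\ref{oic togg dim}, and~\ref{homom equal dim}), the codimension in question equals $\dim(A_H(F)) - \dim(A_T(F))$. By Lemma~\ref{glc ac lemma} the space $S \coloneqq \Span_\R(\{\ac{s_i} \mid i \in [t-1]\})$ has dimension $t-1$ and meets $A_T(F)$ trivially, and since $\dim(A_T(F)) + (t-1) = n = \dim\langle\ac{i}\rangle$ we get $\langle\ac{i}\rangle = A_T(F) \oplus S$. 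As $A_T(F) \subseteq A_H(F) \subseteq \langle\ac{i}\rangle$, the modular law yields $A_H(F) = A_T(F) \oplus (A_H(F) \cap S)$, so
\[
\dim(I_H(F)) - \dim(I_T(F)) = \dim\bigl(A_H(F) \cap S\bigr).
\]
Thus the conjecture is equivalent to: for every $t\ge 2$ and $0\le i\le t-1$ there is a $t$-segment fence for which exactly an $i$-dimensional space of linear combinations of the shared-element antichain indicators $\ac{s_1},\dots,\ac{s_{t-1}}$ is homomesic.

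Next I would attack the two extreme values and then interpolate. For $i = t-1$ one wants all of $S$ to be homomesic; highly symmetric fences such as $\breve{F}(2^t)$ or the self-dual $\breve{F}(a^t)$ are the natural candidates, and the homomesies would be certified by writing each relevant statistic as a constant plus a linear combination of \emph{antichain} toggleability statistics $T_A$ (Theorem~\ref{antichain togg is 0-mesic}), exactly as in the three-segment computation of Theorem~\ref{peak minus valley for a,a,a}. For $i = 0$ one wants $A_H(F)\cap S = \{0\}$, which would be established by exhibiting, for each nonzero $g\in S$, two rowmotion orbits on which $g$ has different averages; an asymmetric fence with generic segment lengths is the natural candidate. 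To realize intermediate values I would look for a building-block or inductive scheme: either glue together symmetric gadgets (each contributing one homomesic direction in $S$) with rigid gadgets (contributing none), or prove an extension lemma showing that appending a suitable single segment to a $t$-segment fence of codimension $i$ produces a $(t+1)$-segment fence of codimension $i$ or $i+1$ at will, after which induction on $t$ finishes.

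The main obstacle, in every one of these steps, is control of the rowmotion orbit structure. Both directions ultimately rest on computing or comparing orbit averages of the shared-element statistics $\ac{s_j}$, and as the paper emphasizes, the order of rowmotion on a general fence is unknown; even the self-dual homomesies of Conjectures~\ref{peaks minus valleys self-dual} and~\ref{refined stats self-dual} remain open. Consequently the lower-bound direction (producing $i$ independent homomesies) is the more tractable half, since it only requires \emph{constructing} suitable antichain-toggleability expressions and checking them case-by-case on ideal types. The upper-bound direction (certifying that no further combination of the $\ac{s_j}$ is homomesic) is the genuine difficulty: it demands a family of fences whose orbit averages can be computed in closed form and for which the rank of the resulting linear system is provably $t-1-i$. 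Finding such a uniform family that interpolates cleanly across the whole range $[0,t-1]$, rather than verifying finitely many cases as the computer experiments do, is where the real work lies.
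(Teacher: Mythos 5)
This statement is a conjecture in the paper, supported only by computer experiments on fences with up to seven segments; the paper offers no proof, so there is nothing to compare your argument against on the paper's side. Your opening reduction, however, is correct and is a genuine step beyond what the paper records explicitly: since $\dim(I_T(F))=\dim(A_T(F))=n-(t-1)$, $\dim(I_H(F))=\dim(A_H(F))$, and Lemma~\ref{glc ac lemma} gives $\langle\ac{i}\rangle = A_T(F)\oplus S$ with $S=\Span_\R(\{\ac{s_i}\mid i\in[t-1]\})$, the modular law does yield $A_H(F)=A_T(F)\oplus\bigl(A_H(F)\cap S\bigr)$ and hence $\dim(I_H(F))-\dim(I_T(F))=\dim\bigl(A_H(F)\cap S\bigr)$. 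This cleanly localizes the question to homomesies among the shared-element antichain indicators.

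Everything after that reduction is a research program, not a proof, and the gaps are exactly where the content of the conjecture lives. You exhibit no fence for any pair $(t,i)$, and you provide no mechanism for certifying either half of the equality $\dim(A_H(F)\cap S)=i$: the lower bound would require actually producing $i$ independent homomesic combinations of the $\ac{s_j}$ (for $i=t-1$ your candidates lean on Conjectures~\ref{peaks minus valleys self-dual} and~\ref{refined stats self-dual}, which are themselves open, and those only concern differences $\ac{s_j}-\ac{s_{t-j}}$, at most about half the dimension you need; the only elementary route to $i=t-1$ is a single-orbit fence, and it is not known that such a fence exists for every $t$), while the upper bound requires showing that every remaining nonzero combination fails to be homomesic, which forces exactly the orbit-average computations you concede are out of reach. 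The ``gluing'' or ``append a segment'' interpolation step is asserted without any lemma that controls how the orbit structure, and hence $A_H(F)\cap S$, changes under such operations. So the proposal should be read as a promising reformulation of the conjecture plus an honest list of obstacles, not as a proof; the statement remains open.
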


\begin{ques}
For which fences do $\langle\oic{i}\rangle$ and $\langle\ac{i}\rangle$ not contain additional homomesic statistics, i.e., $I_H(F)=I_T(F)$ and $A_H(F)=A_T(F)$?
\label{homom equals togg ques}
\end{ques}

\noindent Some data relevant to this question is summarized in Table~\ref{data table}. Since the homomesy phenomenon becomes interesting only if there is more than one orbit under the considered action, we also include information about how many small fences have only one orbit under rowmotion.

\begin{table}[!ht]
\renewcommand{\arraystretch}{1.5}
\begin{tabular}{| c | c | c | c | c |}
\hline
$t$ & $n$ & $\#$ fences & $\#$ fences with single orbit & $\#\{F\mid\dim(I_H(F))=\dim(I_T(F))\}$\\
\hline
3 & $\leq 20$ & 969 & 234 $(\approx 24.15\%)$ & 40 $(\approx 4.13\%)$ \\
\hline
4 & $\leq 20$ & 3876 & 346 $(\approx 8.93\%)$ & 188 $(\approx 4.85\%)$ \\
\hline
5 & $\leq 15$ & 2002 & 54 $(\approx 2.70\%)$ & 74 $(\approx 3.70\%)$ \\
\hline
6 & $\leq 15$ & 3003 & 16 $(\approx 0.53\%)$ & 218 $(\approx 7.26\%)$ \\
\hline
7 & $\leq 15$ & 3432 & 2 $(\approx 0.06\%)$ & 472 $(\approx 13.75\%)$ \\
\hline
\end{tabular}
\caption{Data for Question~\ref{homom equals togg ques} for small fences.}
\label{data table}
\end{table}

\subsection{Promotion} While we have focused on the action of rowmotion on order ideals of a poset in this paper, which involves toggling in the reverse order of a linear extension, other actions that are compositions of toggles have also been considered, see for example~\cite{came:ooar,stri:par,stri:ttgh}. One action, in particular, is promotion, which involves toggling ``columns" from left to right. With the way we have been drawing the fences, each element is in its own ``column", so performing promotion amounts to toggling the elements from left to right. That is, if $F$ is a fence with $n=\# F$, then the promotion map $\pro:\JJJ(F) \rightarrow \JJJ(F)$ is defined as 
\[
\pro\coloneqq\tau_n\circ\cdots\circ\tau_1.
\]

Striker and Williams~\cite{stri:par} showed that, for any rc-poset, rowmotion and promotion are conjugate elements in the toggle group: the group generated by all of the order ideal toggles. This result was later generalized to posets $P$ with an $n$-dimensional lattice projection~\cite{dilk:rioo} (an order- and rank-preserving map  $\pi: P \rightarrow \mathbb{Z}^n$). In particular, this implies that promotion and rowmotion have the same orbit structure for such posets.

Enjoying the same orbit structure does not necessarily translate into having the same homomesies, though. For example, the toggleability statistics which have been heavily utilized to show homomesy results in this paper are not generally 0-mesic under promotion. However, Einstein and Propp~\cite{eins:cpla} were the first to use a process called \emph{recombination} which relates the promotion and rowmotion orbits to show that statistics which are linear combinations of order ideal indicator functions are homomesic under rowmotion on the rectangle poset if and only if they are homomesic under promotion. This idea was later generalized to posets with an $n$-dimensional lattice projection by Vorland~\cite{vorl:hipo}. In particular, Theorem 56 from~\cite{vorl:hipo} applies to fences and we illustrate it with an example.

\begin{ex} For this example, we draw the fence $F$ rotated clockwise by $\pi/4$, so that the element $1$ is in the top left corner (Figure~\ref{fig:recomb}) and the notion of columns is clear. Suppose $F$ has $k$ columns. The recombination, $\RR(I)$, of an ideal $I$ of $F$ is the union of the $j$-th columns of $\rho^{j-1}(I)$, $1\leq j \leq k$. Then $\pro(\RR(I)) = \RR(\rho(I))$. 
\end{ex}

\begin{figure}[h]
\centering
\tikzstyle{b} = [circle, draw, fill=black, inner sep=0mm, minimum size=1mm]
\tikzstyle{w} = [circle, draw, color=black, fill=white, inner sep=0mm, minimum size=1mm]
\tikzstyle{r} = [circle, draw, color=red, inner sep=0mm, minimum size=3mm]
\tikzstyle{u} = [circle, draw, color=blue, inner sep=0mm, minimum size=3mm]
\begin{tikzpicture}[xscale=0.5,yscale=0.5,baseline={(0,3)}]
\begin{scope}[shift={(0,4)}]
\node (1) at (0,2) [b] {};
\node (7) at (0,2) [r] {};
\node (2) at (1,2) [b] {};
\node (3) at (2,2) [w] {};
\node (4) at (2,1) [b] {};
\node (5) at (2,0) [b] {};
\node (6) at (3,0) [b] {};
\draw (1) -- (2);
\draw (2) -- (3);
\draw (3) -- (4);
\draw (4) -- (5);
\draw (5) -- (6);
\draw[->,dashed](4,1) -- (5,1);
\node (10) at (4.4,1.6) {$\rho$};
\end{scope}

\begin{scope}[shift={(6,4)}]
\node (1) at (0,2) [b] {};
\node (8) at (0,2) [u] {};
\node (2) at (1,2) [b] {};
\node (7) at (1,2) [r] {};
\node (3) at (2,2) [b] {};
\node (4) at (2,1) [b] {};
\node (5) at (2,0) [b] {};
\node (6) at (3,0) [w] {};
\draw (1) -- (2);
\draw (2) -- (3);
\draw (3) -- (4);
\draw (4) -- (5);
\draw (5) -- (6);
\draw[->,dashed](4,1) -- (5,1);
\node (10) at (4.4,1.6) {$\rho$};
\end{scope}

\begin{scope}[shift={(12,4)}]
\node (1) at (0,2) [w] {};
\node (2) at (1,2) [w] {};
\node (7) at (1,2) [u] {};
\node (3) at (2,2) [w] {};
\node (4) at (2,1) [w] {};
\node (5) at (2,0) [b] {};
\node (6) at (3,0) [b] {};
\draw (1) -- (2);
\draw (2) -- (3);
\draw (3) -- (4);
\draw (4) -- (5);
\draw (5) -- (6);
\draw (2,1) ellipse (0.5cm and 1.6cm) [red];
\draw[->,dashed](4,1) -- (5,1);
\node (10) at (4.4,1.6) {$\rho$};
\end{scope}

\begin{scope}[shift={(18,4)}]
\node (1) at (0,2) [b] {};
\node (2) at (1,2) [w] {};
\node (3) at (2,2) [w] {};
\node (4) at (2,1) [b] {};
\node (5) at (2,0) [b] {};
\node (6) at (3,0) [w] {};
\node (7) at (3,0) [r] {};
\draw (1) -- (2);
\draw (2) -- (3);
\draw (3) -- (4);
\draw (4) -- (5);
\draw (5) -- (6);
\draw (2,1) ellipse (0.5cm and 1.6cm) [blue];
\draw[->,dashed](4,1) -- (5,1);
\node (10) at (4.4,1.6) {$\rho$};
\end{scope}

\begin{scope}[shift={(24,4)}]
\node (1) at (0,2) [b] {};
\node (2) at (1,2) [b] {};
\node (3) at (2,2) [w] {};
\node (4) at (2,1) [w] {};
\node (5) at (2,0) [b] {};
\node (6) at (3,0) [b] {};
\node (7) at (3,0) [u] {};
\draw (1) -- (2);
\draw (2) -- (3);
\draw (3) -- (4);
\draw (4) -- (5);
\draw (5) -- (6);
\end{scope}

\begin{scope}[shift={(9.5,0)}]
\node (1) at (0,2) [b] {};
\node (7) at (0,2) [r] {};
\node (2) at (1,2) [b] {};
\node (8) at (1,2) [r] {};
\node (3) at (2,2) [w] {};
\node (4) at (2,1) [w] {};
\node (5) at (2,0) [b] {};
\node (6) at (3,0) [w] {};
\node (9) at (3,0) [r] {};
\draw (1) -- (2);
\draw (2) -- (3);
\draw (3) -- (4);
\draw (4) -- (5);
\draw (5) -- (6);
\draw (2,1) ellipse (0.5cm and 1.6cm) [red];
\draw[->,dashed](4,1) -- (5,1);
\node (10) at (4.4,1.6) {$\pro$};
\end{scope}

\begin{scope}[shift={(15.5,0)}]
\node (1) at (0,2) [b] {};
\node (7) at (0,2) [u] {};
\node (2) at (1,2) [w] {};
\node (8) at (1,2) [u] {};
\node (3) at (2,2) [w] {};
\node (4) at (2,1) [b] {};
\node (5) at (2,0) [b] {};
\node (6) at (3,0) [b] {};
\node (9) at (3,0) [u] {};
\draw (1) -- (2);
\draw (2) -- (3);
\draw (3) -- (4);
\draw (4) -- (5);
\draw (5) -- (6);
\draw (2,1) ellipse (0.5cm and 1.6cm) [blue];
\end{scope}
\end{tikzpicture}
\caption{Illustration of recombination:  $\pro(\RR(I)) = \RR(\rho(I))$.}
\label{fig:recomb}
\end{figure}
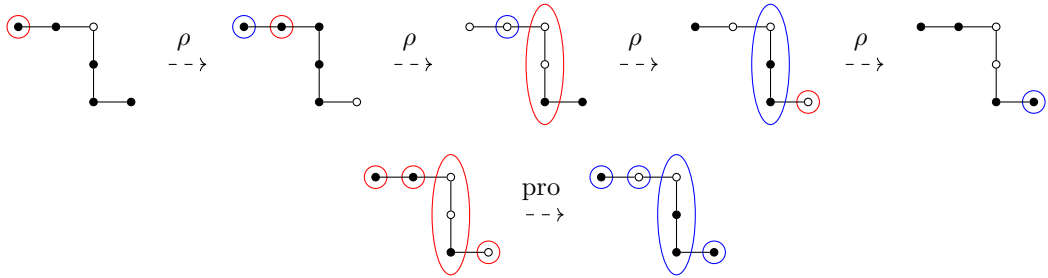

So, as a corollary of Theorem~\ref{oic basis equiv const}, we have the following result.
    
\begin{theorem} Let $\alpha=(\alpha_1,\dots,\alpha_t)$ with corresponding fence $F=\breve{F}(\alpha)$. The statistics in
\[
\Span_\R\left(\bigcup_{i=1}^t\bigcup_{j=1}^{\beta_i}\{\alpha_i\hat{\chi}_{(i,j)}-j\hat{\chi}_p-(\alpha_i-j)\hat{\chi}_v\mid p\text{ peak of }S_i,v\text{ valley of }S_i\}\right)
\]
are homomesic under promotion.
\end{theorem}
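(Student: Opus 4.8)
The plan is to deduce promotion-homomesy from the rowmotion-homomesy already in hand, via the recombination correspondence displayed in Figure~\ref{fig:recomb}. First I would note that every statistic $f$ in the displayed span lies in $I_T(F)\subseteq I_H(F)$: by Theorem~\ref{oic togg dim} the set $\BBB_I(F)$ is a basis of $I_T(F)$, and since functions that are $\equiv\text{const}$ are closed under linear combination, $f\equiv\text{const}$ as well, so $f$ is a linear combination of order ideal indicator functions that is homomesic under $\rho$. The task therefore reduces to the general statement that a member of $\Span_\R(\{\oic{i}\mid i\in[n]\})$ which is homomesic under rowmotion is also homomesic under promotion. For fences this is exactly Theorem~56 of~\cite{vorl:hipo}, so the shortest route is a direct citation; the remaining paragraphs record why that theorem applies here.

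To make the mechanism explicit, I would unwind recombination. The map $\RR$ is a bijection of $\JJJ(F)$ satisfying $\pro\circ\RR=\RR\circ\rho$, hence $\pro=\RR\circ\rho\circ\RR^{-1}$, so $\RR$ carries each rowmotion orbit $\OOO=\{I,\rho(I),\dots,\rho^{m-1}(I)\}$ onto a promotion orbit $\RR(\OOO)$ of the same size $m$. By the column-wise definition of $\RR$, the portion of $\RR(J)$ in column $r$ is copied from $\rho^{\,r-1}(J)$; consequently, for an element $x$ lying in column $r$,
\[
\oic{x}(\RR(J))=\oic{x}\big(\rho^{\,r-1}(J)\big)\qquad\text{for every }J\in\JJJ(F).
\]
Applying this with $J=\rho^{k}(I)$ and using $\pro^{k}(\RR(I))=\RR(\rho^{k}(I))$ gives $\oic{x}(\pro^{k}(\RR(I)))=\oic{x}(\rho^{\,r-1+k}(I))$.

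Next I would sum over a full orbit. Fixing $x$ and summing the last identity over $k=0,\dots,m-1$, the shift by $r-1$ is a cyclic reindexing of the $m$-cycle (using $\rho^{m}=\mathrm{id}$ on $\OOO$), so
\[
\sum_{k=0}^{m-1}\oic{x}\big(\pro^{k}(\RR(I))\big)=\sum_{k=0}^{m-1}\oic{x}\big(\rho^{\,r-1+k}(I)\big)=\sum_{\ell=0}^{m-1}\oic{x}\big(\rho^{\ell}(I)\big).
\]
By linearity the same equality passes from each $\oic{x}$ to any $f$ in the span, so the orbit-sum of $f$ over $\RR(\OOO)$ equals its orbit-sum over $\OOO$; dividing by the common size $m$ shows the two averages coincide. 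Since $f$ is $c$-mesic under rowmotion by Theorem~\ref{oic basis equiv const} (and linearity), it is $c$-mesic under promotion, which is the claim.

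The step I expect to need the most care is the bookkeeping behind the recombination identity: namely that $\RR$ is genuinely a bijection conjugating $\pro$ and $\rho$ (so that orbit sizes match and the reindexing is a full cyclic shift), together with reconciling the left-to-right toggle order defining $\pro=\tau_n\circ\cdots\circ\tau_1$ with the rank-columns used to define $\RR$ in the rotated drawing. Each of these points is precisely what Theorem~56 of~\cite{vorl:hipo} establishes for posets with an $n$-dimensional lattice projection, so in the final write-up I would lean on that citation rather than re-derive the recombination machinery, presenting the orbit-sum computation above only as the intuition for why the transfer holds.
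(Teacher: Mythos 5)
Your proposal is correct and follows essentially the same route as the paper: the statistics lie in $I_T(F)$ by Theorem~\ref{oic basis equiv const}, hence are homomesic under rowmotion, and the transfer to promotion is accomplished by the recombination result (Theorem~56 of~\cite{vorl:hipo}), which the paper likewise invokes rather than re-proves. Your explicit orbit-sum unpacking of recombination is a faithful account of the mechanism behind that citation.
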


\end{document}